\newtheorem{theorem}{Theorem}[section]
\newtheorem{lemma}[theorem]{Lemma}
\newtheorem{proposition}[theorem]{Proposition}
\newtheorem{corollary}[theorem]{Corollary}
\newtheorem{assumption}[theorem]{Assumption}
\theoremstyle{definition}
\newtheorem{definition}[theorem]{Definition}
\theoremstyle{remark}
\newtheorem{remark}[theorem]{Remark}
\newcommand{\diag}{\mathop{\mathbf{diag}}}
\newcommand{\abs}[1]{\ensuremath{\left|#1\right|}}
\newcommand{\norm}[2][]{\ensuremath{\left\Vert #2 \right\Vert{#1}}}
\renewcommand{\vec}[1]{\mathbf{#1}}
\newcommand{\grad}{\mathrm{grad}}
\newcommand{\Retr}{\mathrm{Retr}}
\newcommand{\M}{\mathcal{M}}
\newcommand{\Hess}{\mathrm{Hess}}
     \newcommand{\BN}{{\mathbb {N}}}
     \newcommand{\BR}{{\mathbb {R}}}
     \newcommand{\BZ}{{\mathbb {Z}}}
\begin{document}

\title{Constants of Motion: The Antidote to Chaos in Optimization and Game Dynamics}

\author{ Georgios Piliouras\\Singapore University of Technology and Design\\georgios@sutd.edu.sg
\and Xiao Wang\\Shanghai University of Finance and Economics\\wangxiao@sufe.edu.cn

}


\date{}
\maketitle

\begin{abstract}
Several recent works in online optimization and game dynamics have established strong negative complexity results including the formal emergence of instability and chaos even in small such settings, e.g., $2\times 2$ games. These results motivate the following question: Which methodological tools can guarantee the regularity of such dynamics and how can we apply them in standard settings of interest such as discrete-time first-order optimization dynamics? We show how proving the existence of invariant functions, i.e., constant of motions, is a fundamental contribution in this direction and establish a plethora of such positive results (e.g. gradient descent, multiplicative weights update, alternating gradient descent and manifold gradient descent) both in optimization as well as in game settings. At a technical level, for some conservation laws we provide an explicit and concise closed form, whereas for other ones we present non-constructive proofs using tools from dynamical systems.
\end{abstract}

\section{Introduction}
Optimization-driven learning dynamics lie at the core of some of the most successful ML applications be it Deep Learning~\cite{goodfellow2016deep}, where the goal is the minimization of a single loss function or 
Generative Adversarial Networks (GANs)~\cite{Goodfellow:2014:GAN:2969033.2969125}, where the goal is the simultaneous optimization over numerous utility functions, i.e. a game. Despite this high level affinity, the behavior of standard first-order optimization dynamics in the two settings are any but similar.

A common thread in the analysis of different  optimization algorithms (e.g. gradient descent) and heuristics in non-convex optimization  is that they have successfully leveraged deep intuition and tools coming from dynamical systems (e.g. stable manifold theorem) to understand the geometry of these orbits and argue that they converge not merely to fixed points but typically to local minima \cite{lee2016gradient,panageas2017gradient,du2017gradient,LPPSJR17,jin2017escape,PPW,PPW2019NIPS}. In some special cases of non-convex problems, e.g., when all local minima are global and there exists a negative curvature for every saddle point these results suffice to argue convergence to global optimum. However, these assumptions are quite restrictive since even for three-layer
linear networks, there exists a saddle point without a negative curvature~\cite{kawaguchi2016deep}. So, although positive results in the direction of global optimization exist (e.g., \cite{ge2016matrix,ge2017no,du2017gradient,du2019gradient}) it is clear that  we need a more fine grained language for understanding the behavior of such algorithms. Examples of hard gradient-like systems with multiple attractors of widely different quality are abound, e.g. in standard classes of potential/coordination games \cite{Kleinberg09multiplicativeupdates,anshelevich2008price}. If in the extreme we allow for time-varying online systems, then recent results have established formally  their worst case hardness \cite{Vaggos}. 

Whatever the difficulties raised in the non-convex optimization setting, these pale in comparison to the ones 
we are faced with in the study of optimization-driven learning in games, where instability and even chaos seems to be the norm \cite{galla2013complex,sanders2018prevalence}.  
In adversarial (zero-sum) games optimization dynamics such as Gradient Descent Ascent (GDA) or Multiplicative Weights Update (MWU)  do not converge to Nash equilibria but 
instead  can lead to  cycles \cite{mertikopoulos2018cycles,vlatakis2019poincare,piliouras2014optimization,balduzzi2018mechanics,bailey2020finite}, divergence\cite{BaileyEC18,cheung2018multiplicative}, or chaos \cite{SatoFarmer_PNAS,cheung2019vortices}. 
In fact, all Follow-the-Regularized-Leader dynamics, despite their optimal regret guarantees~\cite{Shalev12}, fail to achieve even local asymptotic stability on \textit{any equilibrium of any game} that does not admit a pure Nash equilibrium \cite{flokas2020no}!
In the face of such strongly negative instability and chaos results for standard optimization algorithms a lot of effort has been concentrated on the development of novel algorithms with provable guarantees in zero-sum games~\cite{daskalakis2018training,mertikopoulos2019optimistic,gidel2019a,mescheder2018training,yazici2018unusual}.
 However, such ad-hoc piecemeal approaches do not generalize well. 
 Indeed, recent work in the space of differential/smooth games \cite{balduzzi2018mechanics} has established the existence of such games 
where no reasonable gradient-based method converges \cite{letcher2021impossibility}!
 %
Perhaps even more alarming is the fact that even in the case of potential games~\cite{rosenthal73} (e.g. congestion or network coordination games) where the incentives of the agents are in perfect alignment, MWU dynamics can still bifurcate into instability and chaos especially in the presence of many agents \cite{palaiopanos2017multiplicative,CFMP,Thip18}. 

Naturally, a complete understanding and classification of all possible behaviors of these optimization driven systems  is arguably too aggressive a goal. However, in practice, 
 it would be helpful for us to know a sufficient condition under which the dynamical system is not chaotic, or at least has some notion of structure and predictability. Thus, we are driven by the following question: 

\emph{In the case of MWU, or more generally, first-order optimization algorithms in higher dimensions, what makes the induced dynamical systems non-chaotic?}

\paragraph{Our results and contributions.} 
We answer the above question by showing the following:
\begin{enumerate}
\item For Alternating Gradient Descent in coordination games, the existence of invariant functions negates the occurrence of chaos into a zero-measure set, Proposition \ref{levelset} and Theorem \ref{thm:bipameasure0};  

\item For Gradient Descent, MWU and Manifold Gradient Descent with small step size, there are many invariant functions on a open dense subset of the phase space.

\end{enumerate}


\subsection{Related Work}
{\bf Invariants in game and optimization dynamics} The question of existence of invariant function have been the subject of recent work in the area of adversarial machine learning.
 Zero-sum game dynamics are typically cyclic or slowly divergent and even when they do converge they typically do so in a spiralling fashion \cite{daskalakis2018training,daskalakis2018last,gidel2019a,yazici2018unusual,gidel2019negative,balduzzi2019open,BaileyEC18,2019arXiv190602027A}. 
 At the core of these results lie formal connections between such dynamics and Hamilotnian systems, i.e., systems that have a notion of ``constant of the motion" (Hamiltonian) that weaves the dynamics into recurrent periodic-like orbits \cite{BP,2018arXiv180205642B,2019arXiv190602027A,vlatakis2019poincare}. Arguably, in the most closely related paper to ours, 
 \cite{nagarajan2020chaos} recently showed how to identify enough invariant functions in continuous-time variants of Follow-the-Regularized-Leader dynamics and then used dimensionality reduction arguments  to argue that chaos was not possible in a class of network polymatrix games. Critically, their technique was based on the Poincar\'{e}-Bendixson theorem that states that ODEs with two degrees of freedom cannot exhibit chaos. However, this theorem/proof technique does not apply in discrete, where even one dimensional (game theoretic) maps can be chaotic~\cite{palaiopanos2017multiplicative}. In fact, making progress in the case of discrete-time dynamics, that we exactly address here, was the main open question of that paper.
 
 We build upon  early insights in \cite{Stebe} where invariants  were established for an update rule known as multiplicative weights update  (MWU) \cite{Arora05themultiplicative} when the potential function is a polynomial with non-negative coefficients defined over a simplex. Our analysis generalizes greatly both the class of dynamics as well as the class of functions for which invariant functions  exist.

 \section{Preliminaries}
\textbf{Notations}  Throughout this paper, we use bold font $\vec{x}$ for vectors and regular $x$ for points in manifold, $\mathbb{Z}$ denotes the set of integers, $T$ denote the transformation defined by optimization algorithm ($T_{\eta}$ is used if stepsize $\eta$ is specified), $[\vec{x}]$ denotes the orbit generated by iteration of $T$ where $\vec{x}$ belongs, i.e. $[\vec{x}]:=\{T^k(\vec{x})\}_{k\in\mathbb{Z}}$. The $\nabla f$ and $\grad f$ refer to the Euclidean gradient and general Riemannian gradient, $\nabla^2f$ and $\Hess f$ refer to the Euclidean and Riemannian Hessian.
\\
A multi-index is an $n$-tuple of nonnegative integers, denoted as
$
\alpha=(\alpha_1,\alpha_2,...,\alpha_n) 
$
where $\alpha_j\in\{0,1,2,...\}$.
If $\alpha$ is a multi-index, we define
$
\abs{\alpha}=\alpha_1+\alpha_2+...+\alpha_n, \ \ \ \alpha!=\alpha_1!\alpha_2!...\alpha_n!
$,
$
\vec{x}^{\alpha}=x_1^{\alpha_1}x_2^{\alpha_2}...x_n^{\alpha_n}, \text{where} \ \ \vec{x}=(x_1,...,x_n)\in\mathbb{R}^n
$,
$
\partial^{\alpha}f=\partial_1^{\alpha_1}\partial_2^{\alpha_2}...\partial_n^{\alpha_n}f=\frac{\partial^{|\alpha|}f}{\partial x_1^{\alpha_1}\partial x_2^{\alpha_2}...\partial x_n^{\alpha_n}}
$.
\paragraph{Dynamical System}
Let $T:\mathbb{R}^n\rightarrow\mathbb{R}^n$ be a differentiable map. The process $\vec{x}_{k+1}=T(\vec{x}_k)$ for $k\in\mathbb{Z}$ is called a (discrete time) dynamical system in $\mathbb{R}^n$.

\paragraph{Homeomorphism and Diffeomorphism}
A map $T:X\rightarrow Y$ between two topological spaces is a homeomorphism if $T$ is a continuous  bijection such that the inverse $T^{-1}$ is also continuous. $T$ is a diffeomorphism if it is a homeomorphism such that both $T$ and $T^{-1}$ are differentiable.

\begin{theorem}[Taylor's Theorem]\label{Taylor}
Suppose $f:\mathbb{R}^n\rightarrow\mathbb{R}$ is of class $C^{k+1}$ on an open convex set $S$. If $\vec{a}\in S$ and $\vec{a+h}\in S$, then
\[
f(\vec{a}+\vec{h})=\sum_{|\alpha|\le k}\frac{\partial^{\alpha}f(\vec{a})}{\alpha!}\vec{h}^{\alpha}+R_{\vec{a},k}(\vec{h}),
\]
where the remainder is given in Lagrange's form by
\[
R_{\vec{a},k}(\vec{h})=\sum_{|\alpha|=k+1}\partial^{\alpha}f(\vec{a}+c\vec{h})\frac{\vec{h}^{\alpha}}{\alpha!} \ \ \ \text{for some} c\in(0,1).
\]
\end{theorem}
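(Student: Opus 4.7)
The plan is to reduce the multivariable claim to the familiar single-variable Taylor theorem with Lagrange remainder by slicing $f$ along the segment from $\vec{a}$ to $\vec{a}+\vec{h}$. Concretely, I would define
\[
g(t) := f(\vec{a}+t\vec{h}), \qquad t\in[0,1].
\]
Since $S$ is open and convex and contains both endpoints, it contains the whole segment, and in fact an open neighborhood of it; hence $g$ is well-defined and of class $C^{k+1}$ on an open interval containing $[0,1]$.

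Next I would compute the derivatives of $g$ at a general point $t$. A straightforward induction using the chain rule gives
\[
g^{(j)}(t) \;=\; \sum_{i_1,\dots,i_j=1}^{n} (\partial_{i_1}\cdots\partial_{i_j} f)(\vec{a}+t\vec{h})\, h_{i_1}\cdots h_{i_j}.
\]
Collecting identical partial derivatives (which commute since $f$ is $C^{k+1}$) and counting the number of orderings of each multi-index $\alpha$ with $|\alpha|=j$ via the multinomial coefficient $j!/\alpha!$ yields the clean form
\[
g^{(j)}(t) \;=\; \sum_{|\alpha|=j} \frac{j!}{\alpha!}\, \partial^{\alpha} f(\vec{a}+t\vec{h})\, \vec{h}^{\alpha}.
\]

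Then I would apply the classical one-dimensional Taylor theorem with Lagrange remainder to $g$, expanding about $t=0$ and evaluating at $t=1$: there exists $c\in(0,1)$ with
\[
g(1) \;=\; \sum_{j=0}^{k} \frac{g^{(j)}(0)}{j!} \;+\; \frac{g^{(k+1)}(c)}{(k+1)!}.
\]
Substituting the multi-index expression for $g^{(j)}$ and noting that the $j!$ denominators cancel the $j!$ in the multinomial coefficients, the finite sum becomes $\sum_{|\alpha|\le k} \partial^{\alpha} f(\vec{a})\,\vec{h}^{\alpha}/\alpha!$, while the remainder becomes exactly $R_{\vec{a},k}(\vec{h})$ as stated.

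The only nonroutine step is the derivation of the closed form for $g^{(j)}(t)$; the main bookkeeping obstacle is the multinomial combinatorics, and the equality of mixed partials (Clairaut/Schwarz) must be invoked to justify merging terms by multi-index $\alpha$. Everything else is an immediate corollary of the single-variable statement applied to $g$.
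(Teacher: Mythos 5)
Your proposal is correct and is the standard argument: the paper states this Taylor theorem as a classical preliminary (without proof), and the canonical derivation is exactly your reduction to the one-variable Lagrange-remainder theorem along $g(t)=f(\vec{a}+t\vec{h})$, with the multinomial count $j!/\alpha!$ and Clairaut's theorem handling the regrouping by multi-index. Nothing is missing; the $C^{k+1}$ hypothesis justifies both the symmetry of the mixed partials up to order $k+1$ and the applicability of the one-dimensional theorem to $g$.
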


\begin{corollary}
If $f$ is of class $C^{k+1}$ on $S$ and $\abs{\partial^{\alpha}f(\vec{x})}\le L$ for $\vec{x}\in S$ and $\abs{\alpha}=k+1$, then
\[
\abs{R_{\vec{a},k}(\vec{h})}\le \frac{L}{(k+1)!}\norm{\vec{h}}^{k+1}
\]
where
\[
\norm{\vec{h}}=\abs{h_1}+\abs{h_2}+...+\abs{h_n}.
\]
\end{corollary}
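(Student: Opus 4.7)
The plan is to apply the triangle inequality directly to the Lagrange remainder given by Taylor's theorem and then recognize the resulting sum as a multinomial expansion. Starting from
\[
R_{\vec{a},k}(\vec{h})=\sum_{|\alpha|=k+1}\partial^{\alpha}f(\vec{a}+c\vec{h})\frac{\vec{h}^{\alpha}}{\alpha!},
\]
I would first note that since $S$ is convex and contains both $\vec{a}$ and $\vec{a}+\vec{h}$, the intermediate point $\vec{a}+c\vec{h}$ lies in $S$ for every $c\in(0,1)$. The hypothesis $|\partial^{\alpha}f(\vec{x})|\le L$ for $\vec{x}\in S$ and $|\alpha|=k+1$ therefore applies at $\vec{a}+c\vec{h}$, giving the uniform bound $|\partial^{\alpha}f(\vec{a}+c\vec{h})|\le L$ for every multi-index of size $k+1$.

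Next I would apply the triangle inequality and use $|\vec{h}^{\alpha}|=|h_1|^{\alpha_1}\cdots|h_n|^{\alpha_n}$ to obtain
\[
|R_{\vec{a},k}(\vec{h})|\le L\sum_{|\alpha|=k+1}\frac{|h_1|^{\alpha_1}\cdots|h_n|^{\alpha_n}}{\alpha!}.
\]
The key observation is then the multinomial identity
\[
(|h_1|+|h_2|+\cdots+|h_n|)^{k+1}=\sum_{|\alpha|=k+1}\frac{(k+1)!}{\alpha!}|h_1|^{\alpha_1}\cdots|h_n|^{\alpha_n},
\]
which lets me pull out a factor $(k+1)!$ and identify the remaining sum exactly with $\|\vec{h}\|^{k+1}$ under the given $\ell^1$-style norm.

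Combining these two steps yields
\[
|R_{\vec{a},k}(\vec{h})|\le \frac{L}{(k+1)!}(|h_1|+\cdots+|h_n|)^{k+1}=\frac{L}{(k+1)!}\|\vec{h}\|^{k+1},
\]
which is the desired bound. There is no genuine obstacle here: the entire argument reduces to recognizing the multinomial theorem inside the Lagrange remainder, and the specific choice of the $\ell^1$ norm in the statement is precisely what makes the resulting inequality sharp without extra dimensional constants. The only minor point to mention explicitly is the convexity of $S$, which guarantees that the supremum hypothesis on $\partial^{\alpha}f$ is valid along the entire segment from $\vec{a}$ to $\vec{a}+\vec{h}$.
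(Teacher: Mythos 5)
Your proof is correct and is essentially the argument the paper intends: the corollary is stated without a separate proof, but the special case $k=1$ of exactly your computation (triangle inequality on the Lagrange remainder, the uniform bound $L$ on the segment, and the multinomial identity $\sum_{|\alpha|=2}\abs{\vec{v}^{\alpha}}/\alpha!=\tfrac{1}{2}(\sum_i\abs{v_i})^2$) appears verbatim inside the paper's proof of Lemma \ref{lemma:IF of GD}. Your explicit remark about convexity of $S$ guaranteeing $\vec{a}+c\vec{h}\in S$ is a worthwhile detail the paper leaves implicit.
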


\paragraph{Proper Maps}
Let $X$ and $Y$ be topological spaces. A map from $X$ to $Y$, denoted $f:X\rightarrow Y$, is called \emph{proper} if the inverse of each compact subset of $Y$ is a compact subset of $X$.

\begin{theorem}[Theorem 2, \cite{Ho1975}]\label{proper map}
Let $X$ be path-connected and $Y$ be simply-connected Hausdorff spaces. A local homeomorphism $f:X\rightarrow Y$ is a global homeomorphism of $X$ to $Y$ if and only if the map $f$ is proper.
\end{theorem}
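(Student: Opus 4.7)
The plan is to prove both directions separately, with almost all the work in the converse. The forward implication that a homeomorphism is proper is immediate: the continuous inverse $f^{-1}$ sends compact sets to compact sets, so $f^{-1}(K)$ is compact whenever $K\subseteq Y$ is. For the converse, suppose $f\colon X\to Y$ is a proper local homeomorphism; I need to show $f$ is bijective, which together with being a local homeomorphism will give global homeomorphism.

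First I would analyze the fibers. For any $y\in Y$, the preimage $f^{-1}(y)$ is discrete because $f$ is a local homeomorphism (each preimage point admits a neighborhood on which $f$ is injective), and compact because $\{y\}$ is compact and $f$ is proper. A discrete compact set is finite, so write $f^{-1}(y)=\{x_1,\ldots,x_n\}$.

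Next I would upgrade $f$ to a covering map. Using local homeomorphism and the Hausdorff property of $X$ (inherited in the applications of interest), pick pairwise disjoint open neighborhoods $U_i\ni x_i$ such that $f|_{U_i}\colon U_i\to f(U_i)$ is a homeomorphism onto an open set containing $y$. The decisive step is to produce an open neighborhood $V$ of $y$ with $f^{-1}(V)\subseteq\bigcup_i U_i$. For this, set $C=X\setminus\bigcup_i U_i$; since $C$ is closed in $X$ and a proper map to a (locally compact) Hausdorff space is closed, $f(C)$ is closed in $Y$, and by construction $y\notin f(C)$. Then
\[
V=\Bigl(\bigcap_{i=1}^n f(U_i)\Bigr)\setminus f(C)
\]
is an open neighborhood of $y$, and $f^{-1}(V)$ splits as a disjoint union of open sets each mapped homeomorphically onto $V$ by $f$. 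This is precisely the evenly covered condition, so $f$ is a covering map.

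Finally I would use the topology of the base. Since $X$ is path-connected and $Y$ is simply connected, standard covering space theory forces the covering $f$ to be one-sheeted: lifting loops in $Y$ forces the common cardinality of the fibers to be $1$. Hence $f$ is a bijective local homeomorphism, i.e.\ a global homeomorphism. The main obstacle I foresee is the closedness of $f(C)$ in the covering-map step; the cleanest argument goes through local compactness of $Y$ (valid in the Euclidean/manifold settings used in the paper), and a more delicate net/ultrafilter argument is needed if one wants to avoid that hypothesis entirely.
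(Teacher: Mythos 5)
This statement is not proved in the paper at all: it is imported verbatim as Theorem 2 of the cited reference (Ho, 1975) and used as a black box to upgrade the local diffeomorphism property of $T$ to global invertibility. So there is no in-paper proof to compare against; your argument should be judged against the standard one, and it is essentially that: forward direction trivial, converse by showing a proper local homeomorphism is a covering map (finite discrete fibers, disjoint slices $U_i$, shrink to an evenly covered $V$ by removing the closed set $f(X\setminus\bigcup_i U_i)$), then one-sheetedness from simple connectivity of $Y$ via loop lifting. Two caveats, both of which you already flag: separating the fiber points by disjoint $U_i$ uses Hausdorffness of $X$ (genuinely needed --- the line with doubled origin over $\mathbb{R}$ is a proper local homeomorphism that is not injective), and the closedness of $f$ uses local compactness of $Y$; both hold in every setting where the paper invokes the theorem ($\mathbb{R}^d$, products of simplices, manifolds). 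One small point you gloss over: the loop-lifting step only shows fibers have \emph{at most} one point; surjectivity needs the separate observation that the image of $f$ is open (local homeomorphism), closed (properness), and nonempty, hence all of the connected space $Y$. With that sentence added, your proof is complete and is, for practical purposes, the proof the paper is relying on.
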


Dynamical systems that can be fully understood are said to be regular and some other dynamical systems have strange, \emph{chaotic} behavior \cite{palaiopanos2017multiplicative}. There are many different definitions for the concept of being ``chaotic". In this paper we follow the definition of Li-Yorke chaos that is mostly used in machine learning community in recent years, e.g. \cite{cheung2019vortices}. In the rest of the first section, we review some fundamental concepts in the theory of chaos.

\paragraph{Scrambled Set}
Let $T:\BR^n\rightarrow\BR^n$ be a dynamical system with $T$ the update rule. A pair of points $\vec{x},\vec{y}\in\BR^d$ is called \emph{scrambled} if $\liminf_{k\rightarrow\infty}\norm{T^k(\vec{x})-T^k(\vec{y})}=0$ and also $\limsup_{k\rightarrow\infty}\norm{T^k(\vec{x})-T^k(\vec{y})}>0$. A set $S$ is called ``scrambled" if for all $\vec{x},\vec{y}\in S$, the pair is ``scrambled".

\begin{remark}
The term ``Chaos" was introduced by \cite{liyorke} for the first time in describing a complicated and highly irregular behavior of one dimenisonal dynamical system on an interval of $\BR$.
\end{remark} The formal statement is the following:
\paragraph{Li-Yorke Chaos}\label{LYchaos}
A discrete time dynamical system with update rule $f:I\rightarrow I$ is called chaotic if (a) for each $k\in\BZ^+$, there exists a periodic point $p\in I$ of period $k$ and (b) there is an uncountably infinite set $S\subset I$ that is ``scrambled".

\begin{remark}
A deep result of \cite{liyorke} and \cite{sharkovskii} asserts that if a continuous map on a closed interval has periodic point of period 3, then this map has periodic point of each period $k\in\BZ^+$. Addition to \cite{palaiopanos2017multiplicative}, machine learning community has found more application of this theory in deep neural networks, e.g. \cite{CNPW2020,CNP2020}.
\end{remark}

\section{Usefulness of Invariant Functions}
This section reviews classic perspective of Invariant functions and Chaos, and goes through the recent development of Chaos in Machine Learning and Game Theory. Regarding the Non-existence of Chaos, Classification of Orbits and Dimensionality Reduction, the usefulness of invariant functions is illustrated in the following: 
\begin{figure}[H]
\centering
\includegraphics[width=0.5\textwidth]{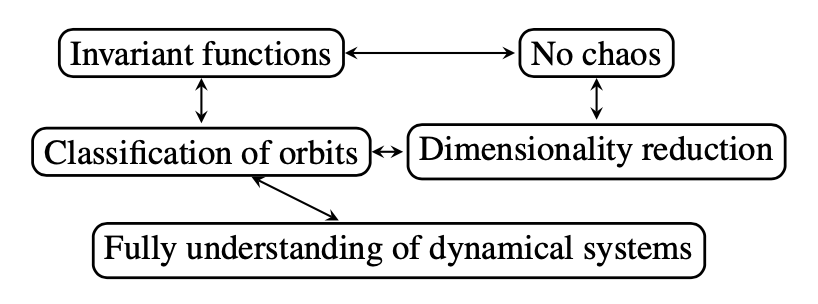}
\caption{Understanding dynamical systems based on invariant functions}
\label{usefulness}
\end{figure}

\subsection{Classic Perspective}
\paragraph{Background of Physics} A fundamental analysis of the order or the regularity of motion starts by representing our object of interest by a \emph{vector}, i.e. a point in the phase space. Generally, the search for order is not easy due to the complexity of the real world. For example, in physics the motion of an object, when described by an orbit, exhibits unlimited diversity and complexity. Under the superficial diversity of various \emph{orbits}, there is a deep structure that rules the generation of every motion: the \emph{equation of motion} in Newton mechanics. Under this framework, the natural method to uncover the order of motion is to find the \emph{integrals} of motion such that the motion is represented by some ``function". However, it is known that integrable equations are rather special, an equation of motion is generally non-integrable. In the theory of mechanics, the non-integrability is considered as chaos or irregularity. According to, \cite{zensho}, \emph{chaos} is defined as a motion that cannot be represented by a ``function".
\paragraph{Invariant Functions and Dimensionality Reduction} 
The order of motion is revealed by decomposing the parameters in the system and describing the change of each parameter by a ``function" that is a representation of an order. Take the linear system as an example, this decomposition is nothing but the eigenvalue problem. In the nonlinear regime, the structure of dynamics can be spanned by \emph{constants of motion},\cite{zensho}, i.e. certain quantity that does not change with time (discrete or continuous). Geometrically, a smooth curve in the space $\mathbb{R}^n$ may be viewed as an intersection of $n-1$ hypersurfaces. A hypersurface is represented by some equation $\Phi(\vec{x})=c$, where $\Phi:\mathbb{R}^n\rightarrow \BR$. Then an orbit is given as a set of points satisfying $n-1$ equations, 
\[
\Phi_j(\vec{x})=c_j \ \ \ \ \ (j=1,...,n-1).
\]
\begin{figure}[H]
\centering
\includegraphics[width=0.45\textwidth]{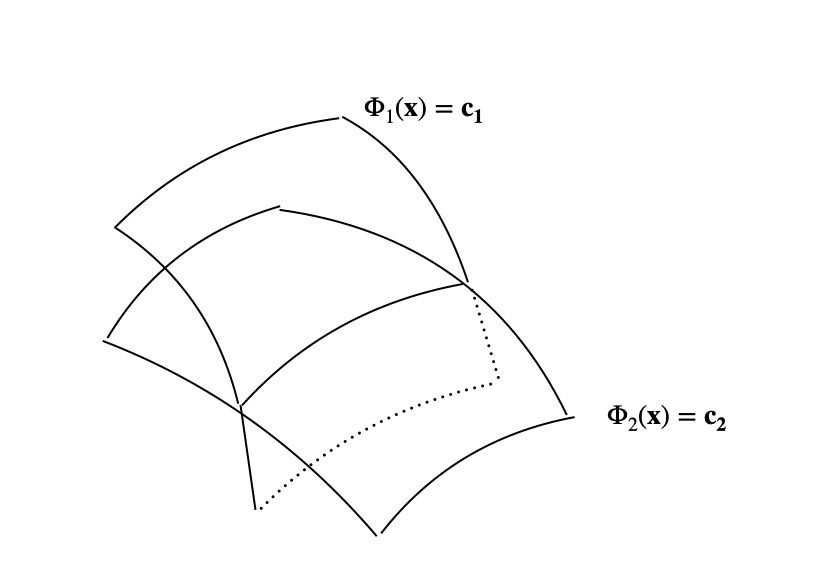}
\caption{Orbit as intersection of hypersurfaces}
\label{intersection}
\end{figure}
A system that is decomposable into constants of motion is equivalent to that the system is integrable. Note that if the orbit of a motion, denoted as $\vec{x}(t)$, is on the intersection of the above equations, then all of $\Phi_j(\vec{x})$ are \textbf{invariant} along the orbit, i.e. $\Phi_j(\vec{x}(t))=c_j$ for all $j\in[n]$. We suggest referring to Chapter 2 of \cite{zensho} for a detailed argument. 

With the intuition of Figure \ref{intersection}, we expect to understand the orbit as the intersection of the hypersurfaces of invariant function. If there exists one invariant function, then two points $\vec{x}_1$ and $\vec{x}_2$ must not be on the same orbit provided $\Phi(\vec{x}_1)\ne\Phi(\vec{x}_2)$. Usually the more invariant functions we can find for a dynamical system, the more efficiently we can reduce the dimension of the phase space into union of lower dimensional spaces.

\subsection{Extensions}

\paragraph{Chaos in Learning and Games}
The first example of chaos in machine learning and game dynamics is introduced by \cite{palaiopanos2017multiplicative}, where the authors investigate the (non)convergence of multiplicative weights update (MWU) in optimization of congestion games. Actually the meaning of ``chaos" in \cite{palaiopanos2017multiplicative} (Li-Yorke Chaos) is not the same as aforementioned one used in physics (non-integrability and non-decomposability). The main discovery of their paper is the following: For a congestion game, there exists certain step size for exponential MWU, such that the orbit might converge to limit cycle, or exhibits highly irregular behavior. Note that non-integrability is defined for continuous time dynamical systems, and this make it difficult to compare with Li-Yorke Chaos where the dynamical system is discrete. Despite of the difference between non-integrability and Li-Yorke Chaos, they can still be connected if we consider \emph{chaos} as ``\textbf{lacking of invariant functions}". We leave a detailed discussion on this in Section \ref{IF for CG}.

\paragraph{Our Perspective}
The high level idea of this paper is the following: The dichotomy of order and chaos of a dynamical system is related to invariant functions of the system, and roughly summarized by the following figure.

\begin{figure}[H]
\centering
\includegraphics[width=0.5\textwidth]{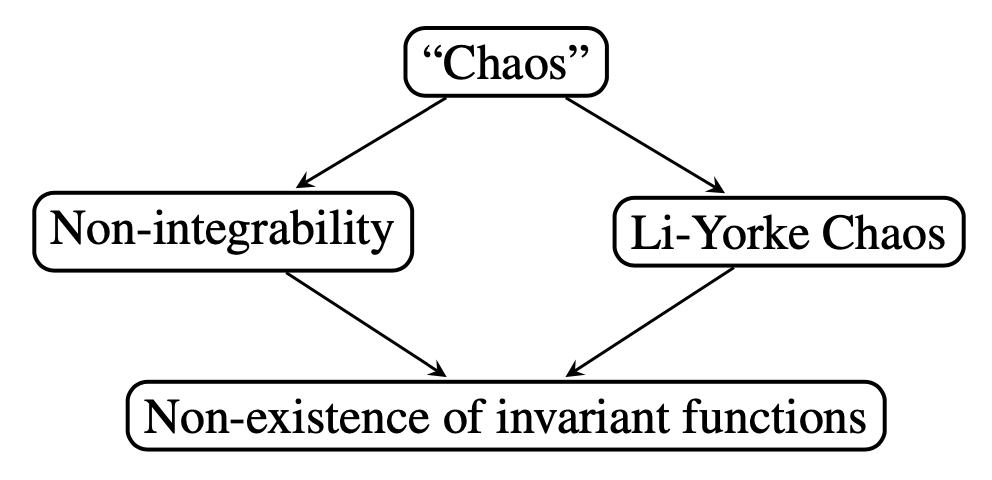}
\caption{Chaos and invariant functions}
\end{figure}

In Section \ref{IF for CG}, we show that for coordination game and bipartite network game, if the agents use Alternating Gradient Ascent, one can find at least one invariant function that is continuous everywhere. An immediate consequence is that the scrambled set generalized from the definition of Li-Yorke Chaos is lying in a zero measure set, which implies that ``chaos" is almost unlikely to occur. From the perspective of \emph{constant of motion}, we have reduced degree of freedom of coordination game by 1 since the points of the same orbit must be on the same level set of the invariant function $\Phi$. 

Section \ref{invariant functions for GD} and \ref{IF:MWU} show that we can actually be more optimistic in the decomposition of the dynamical system induced by gradient descent and multiplicative weights update if the step size is small, where we show that on a $d$-dimensional space $\mathbb{R}^d$, there exist $d$ independent invariant functions. However, these invariant functions are proven to be continuous on an \emph{open dense subset} of $\BR^d$ instead of the whole space. In the light of constants of motion, with $(d-1)$ invariant function, we can already determine that the orbit is on the intersection of these $(d-1)$ equations such that $\Phi_j(\vec{x})=c_j$, so we conclude that the optimization dynamics is highly regular at least on a \textbf{topologically large} subset, see Theorem \ref{invariant functions:GD} and Theorem \ref{IF:compact}. In fact we have obtained an equivalent description of the whole dynamical system: Denote $\phi_1(\vec{x}),...,\phi_d(\vec{x})$ are the invariant functions, then a pair of points $\vec{x}$ and $\vec{y}$ are on the same orbit if and only if the values of $\phi_i$ agree on $\vec{x}$ and $\vec{y}$ for all $i\in[d]$. Moreover, this is a parametrization of the ``space of orbits", since each set of real numbers $(c_1,...,c_d)$ satisfying $\phi_i(\vec{x})=c_i$ refers to a unique orbit $\{T^k(\vec{x})\}_{k\in\BN}$. 

\section{Negation of Chaos: Invariant Functions in Coordination Game}\label{IF for CG}
In this section we show that for the case of bipartite network games, one can find at least one invariant function with closed form. Furthermore, the existence of such invariant function, although we have just found one, implies that Li-Yorke like chaos can only occur in a measure zero set. 

\subsection{Invariant functions in Coordination Games with Alternating Play}
A bipartite network coordination game consists two groups of players $\mathcal{N}_1=\{1,...,n\}$ and $\mathcal{N}_2=\{1,...,m\}$. Suppose the payoff matrices between $i\in\mathcal{N}_1$ and $j\in\mathcal{N}_2$ is $A^{ij}$, and the players in $\mathcal{N}_1$ and $\mathcal{N}_2$ use $\eta_1$ and $\eta_2$ as the learning rates respectively. With Alternating play, the two groups of players update their strategies according to follows
\begin{align}\label{Bipalgorithm}
\vec{x}_i^{t+1}&=\vec{x}_i^t+\eta_1\sum_{j=1}^mA^{ij}\vec{y}_j^t, 
\\
\vec{y}_j^{t+1}&=\vec{y}_j^t+\eta_2\sum_{i=1}^nA^{ij\top}\vec{x}_i^{t+1}
\end{align}
where $\vec{x}_i$ is the strategy of player $i$ and $\vec{y}_j$ is the strategy of player $j$. Then we have a closed form of invariant function for Bipartite Network Game with Alternating Play from the following proposition. 
\begin{theorem}\label{bipartitegame}
Suppose in a bipartite network game, two groups of players update their strategy using alternating gradient descent. Then the function
\[
\Phi(X,Y)=\frac{1}{\eta_1}\sum_{i=1}^n\norm{\vec{x}_i}^2-\frac{1}{\eta_2}\sum_{j=1}^m\norm{\vec{y}_j}^2+\sum_{i=1}^n\sum_{j=1}^m\langle\vec{x}_i,A^{ij}\vec{y}_j\rangle
\]
is invariant under the update Alternating gradient descent, where $X=(\vec{x}_1,...,\vec{x}_n)$ and $Y=(\vec{y}_1,...,\vec{y}_m)$.
\end{theorem}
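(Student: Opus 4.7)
My plan is a direct verification that $\Phi(X^{t+1},Y^{t+1})=\Phi(X^t,Y^t)$ for arbitrary $(X^t,Y^t)$. Since $\Phi$ is a sum of three pieces---one quadratic in $X$, one quadratic in $Y$, and one bilinear coupling---I would compute the increment of each piece separately and then collect terms. The trick that makes the quadratic parts mesh cleanly with the coupling is the polarization identity $\|\vec{a}\|^2-\|\vec{b}\|^2=\langle \vec{a}+\vec{b},\vec{a}-\vec{b}\rangle$, which converts each squared-norm difference into a bilinear expression involving the one-step increment---and that increment is exactly what the alternating update rule provides in closed form.

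Concretely, combining polarization with $\vec{x}_i^{t+1}-\vec{x}_i^t=\eta_1\sum_j A^{ij}\vec{y}_j^t$ rewrites the first summand as
\[
\frac{1}{\eta_1}\sum_{i=1}^n\bigl(\|\vec{x}_i^{t+1}\|^2-\|\vec{x}_i^t\|^2\bigr)=\sum_{i,j}\langle \vec{x}_i^{t+1}+\vec{x}_i^t,\,A^{ij}\vec{y}_j^t\rangle,
\]
with the $\eta_1^{-1}$ neatly cancelling. Using $\vec{y}_j^{t+1}-\vec{y}_j^t=\eta_2\sum_i A^{ij\top}\vec{x}_i^{t+1}$ analogously rewrites the (negatively weighted) second summand as $-\sum_{i,j}\langle \vec{x}_i^{t+1},\,A^{ij}(\vec{y}_j^{t+1}+\vec{y}_j^t)\rangle$. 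Note the asymmetry: the $X$-increment is expressed in terms of $\vec{y}_j^t$ while the $Y$-increment is expressed in terms of $\vec{x}_i^{t+1}$. This asymmetry is exactly where the \emph{alternating} character of the scheme enters, and it will be indispensable for the final cancellation.

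For the coupling term I would telescope
\[
\langle \vec{x}_i^{t+1},A^{ij}\vec{y}_j^{t+1}\rangle-\langle \vec{x}_i^t,A^{ij}\vec{y}_j^t\rangle=\langle \vec{x}_i^{t+1},A^{ij}(\vec{y}_j^{t+1}-\vec{y}_j^t)\rangle+\langle \vec{x}_i^{t+1}-\vec{x}_i^t,A^{ij}\vec{y}_j^t\rangle,
\]
so that the complete increment $\Phi(X^{t+1},Y^{t+1})-\Phi(X^t,Y^t)$ is expressed entirely in the common vocabulary of the three families of inner products $\langle \vec{x}_i^t,A^{ij}\vec{y}_j^t\rangle$, $\langle \vec{x}_i^{t+1},A^{ij}\vec{y}_j^t\rangle$, and $\langle \vec{x}_i^{t+1},A^{ij}\vec{y}_j^{t+1}\rangle$. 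A short bookkeeping check then shows that each of these three families appears with total coefficient zero, and hence $\Phi$ is conserved.

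The calculation is essentially mechanical and I do not foresee a genuine obstacle; the only subtlety is tracking signs, since the asymmetric roles of $X$ (updated from the old $Y^t$) and $Y$ (updated from the new $X^{t+1}$) tempt one to symmetrize prematurely, and doing so breaks the cancellation. From a higher vantage, the statement is equivalent to saying that the linear update map $(X^t,Y^t)\mapsto(X^{t+1},Y^{t+1})$ preserves the indefinite quadratic form $\Phi$, so in principle one could alternatively verify the matrix identity $M^{\top}QM=Q$ for the corresponding update matrix $M$ and Gram matrix $Q$ of $\Phi$; but the coordinatewise telescoping above is cleaner and makes the role of the alternation transparent.
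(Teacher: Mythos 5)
Your proposal is correct and follows essentially the same route as the paper's proof: both compute the one-step increments of the two quadratic terms (your polarization identity is exactly the paper's expansion of $\norm{X^t+\eta_1\vec{A}Y^t}^2$ and $\norm{Y^t+\eta_2\vec{A}^{\top}X^{t+1}}^2$), exploit the asymmetry that the $X$-increment involves $Y^t$ while the $Y$-increment involves $X^{t+1}$, and cancel against the bilinear coupling term. The only difference is cosmetic — you work componentwise in the indices $i,j$ where the paper uses block-matrix notation.
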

A two-agent (bilinear unconstrained) coordination game consists of two agents $\mathcal{N}=\{1,2\}$ where agent $i$ selects a strategy from $\mathbb{R}^{k_i}$. Utilities of both agents are determined via a payoff matrix $A\in\mathbb{R}^{k_1\times k_2}$. In a coordination game, both agents have utility $\langle \vec{x}_1,A\vec{x}_2\rangle$ provided agent 1 selects strategy $\vec{x}_1$ and agent 2 selects strategy $\vec{x}_2$. As a special case of bipartite game, the invariant function of two-agent alternating play is just $\Phi(\vec{x},\vec{y})=\frac{1}{\eta_1}\norm{\vec{x}}^2-\frac{1}{\eta_2}\norm{\vec{y}}^2+\langle\vec{x},A\vec{y}\rangle$.

\subsection{Scrambled Set of Measure Zero}
We next show our first main result that the existence of continuous invariant function implies the non-existence of "large" scrambled set.

\begin{proposition}\label{levelset}
Let $M$ be a complete metric space and $T:M\rightarrow M$ be a homeomorphism. Suppose that there exists a continuous function $\Phi(x):M\rightarrow\mathbb{R}$ such that $\Phi(T(x))=\Phi(x)$ for all $x\in M$. Then the maximal scrambled set $S$ is constrained in a level set of the invariant function $\Phi(x)$, i.e. $S\subset \{x:\Phi(x)=c\}$ for some $c\in\BR$.
\end{proposition}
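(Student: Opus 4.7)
The plan is to show that $\Phi$ takes a single value on all of $S$; setting $c := \Phi(x_0)$ for any fixed $x_0 \in S$ (vacuous if $S = \emptyset$) then yields $S \subseteq \Phi^{-1}(c)$. So the entire content of the statement reduces to the equality $\Phi(x) = \Phi(y)$ for every scrambled pair $x, y \in S$, and I would fix such a pair and chase the definitions.

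First, I would unpack the scrambled-pair hypothesis: the clause $\liminf_{k \to \infty} d(T^k(x), T^k(y)) = 0$ furnishes a subsequence $k_n \to \infty$ along which $d(T^{k_n}(x), T^{k_n}(y)) \to 0$. Second, iterating the invariance identity $\Phi \circ T = \Phi$ produces the two constancy relations $\Phi(T^{k_n}(x)) = \Phi(x)$ and $\Phi(T^{k_n}(y)) = \Phi(y)$ for every $n$, regardless of where the iterates actually land. The remaining step is to transfer vanishing distance in $M$ to vanishing difference in $\Phi$-values: I would pass to a further subsequence along which $T^{k_n}(x)$ converges to a common limit $p \in M$; because the two orbit sequences become arbitrarily close, $T^{k_n}(y)$ converges to the same $p$, and continuity of $\Phi$ at $p$ forces

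\[
\Phi(x) = \lim_n \Phi(T^{k_n}(x)) = \Phi(p) = \lim_n \Phi(T^{k_n}(y)) = \Phi(y).
\]

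The one delicate point is extracting a convergent subsequence of $T^{k_n}(x)$: this implicitly needs the orbit to lie in a precompact subset of $M$, since continuity of $\Phi$ alone is only pointwise and cannot, by itself, upgrade $d(a_n, b_n) \to 0$ to $|\Phi(a_n) - \Phi(b_n)| \to 0$ without some control at infinity. In the concrete dynamics of interest in this paper (bounded orbits in $\mathbb{R}^n$, or orbits on a compact manifold such as the simplex), this precompactness is automatic, so the extraction step is not an obstacle. Modulo this mild background assumption, the proof is a one-line combination of the $\liminf$ clause, invariance of $\Phi$ along orbits, and continuity at the common accumulation point.
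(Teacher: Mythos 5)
Your argument is essentially the same as the paper's: extract a subsequence from the $\liminf$ clause, use $\Phi\circ T=\Phi$ to pin the values of $\Phi$ along both orbits, and invoke continuity to conclude $\Phi(x)=\Phi(y)$ (the paper phrases it as a contradiction, you phrase it directly, which is immaterial). The ``delicate point'' you flag is genuine and is in fact glossed over in the paper's own proof: there the step from $d(T^{k_i}(x_1),T^{k_i}(x_2))\to 0$ to $\lvert\Phi(T^{k_i}(x_1))-\Phi(T^{k_i}(x_2))\rvert\to 0$ is asserted to follow because ``$\Phi$ is continuous in the whole space $M$,'' but pointwise continuity alone does not license this implication for sequences escaping every compact set; one needs uniform continuity of $\Phi$, or precompactness of the relevant orbits so that both sequences accumulate at a common point $p$ where continuity can be applied, exactly as you do. So your proposal is not weaker than the paper's proof --- it makes explicit the additional hypothesis (bounded orbits, or a compact phase space such as the simplex, or uniform continuity of $\Phi$; note the quadratic invariant of Theorem \ref{bipartitegame} is not uniformly continuous on $\mathbb{R}^n$) under which the stated conclusion actually follows, a hypothesis the paper uses implicitly.
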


\begin{proof}
We prove this by contradiction. Assume that there exists a pair of points $x_1$ and $x_2$ in the scrambled set, such that $\Phi(x_1)=c_1$, $\Phi(x_2)=c_2$ and $c_1\ne c_2$. By the definition of scrambled set, we have that 
\[
\liminf_kd(T^k(x_1)-T^k(x_2))=0
\]
and this implies that there exists a subsequence of integers $\{k_i\}_{i\in\BZ}$ such that
\[
\lim_{i\rightarrow\infty}d(T^{k_i}(x_1)-T^{k_i}(x_2))=0.
\]
Since $\Phi(x)$ is continuous in the whole space $M$, for the sequences $\{T^{k_i}(x_1)\}_i$ and $\{T^{k_i}(x_2)\}_i$, we have 
\begin{equation}\label{eq:lim0}
\lim_{i\rightarrow\infty}\abs{\Phi(T^{k_i}(x_1))-\Phi(T^{k_i}(x_2))}=0.
\end{equation}
However, by the invariance of $\Phi$ under the iteration of $T$, we have $\Phi(T^{k_i}(x_1))=\Phi(x_1)=c_1$ and $\Phi(T^{k_i}(x_2))=\Phi(x_2)=c_2$, and then 
\begin{align}
&\abs{\Phi(T^{k_i}(x_1))-\Phi(T^{k_i}(x_2))}
\\
&=\abs{\Phi(x_1)-\Phi(x_2)}
\\
&=\abs{c_1-c_2}>0,
\end{align}
contradicting to equation (\ref{eq:lim0}). The proof completes.
\end{proof}

\begin{theorem}\label{thm:bipameasure0}
For the coordination game and bipartite network game, suppose that the players update their strategies with Alternating Gradient Ascent, i.e., algorithm (\ref{Bipalgorithm}). Then the scrambled set has Lebesgue measure zero in the strategy space.
\end{theorem}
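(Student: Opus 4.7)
The plan is to chain together Theorem~\ref{bipartitegame} (which produces an explicit continuous invariant) with Proposition~\ref{levelset} (which confines any scrambled set to a single level set), and then to close the argument by observing that a non-trivial level set of a polynomial has Lebesgue measure zero. So the skeleton is: produce $\Phi$, verify the hypotheses of Proposition~\ref{levelset}, quote its conclusion, and kill the remaining level set by dimension.

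First I would check that the alternating update $T_{\eta_1,\eta_2}$ defined by (\ref{Bipalgorithm}) is a homeomorphism of the strategy space, so that Proposition~\ref{levelset} applies. Writing the joint iterate as $(X^{t+1},Y^{t+1})=T(X^t,Y^t)$, the map $T$ is the composition of two block-triangular affine maps with identity diagonal blocks (first update $X$ from $Y$, then update $Y$ from the new $X$); each factor has determinant one and an explicit inverse of the same form, hence $T$ and $T^{-1}$ are smooth, and in particular $T$ is a homeomorphism. The function $\Phi$ of Theorem~\ref{bipartitegame} is a polynomial in the entries of $X,Y$ and is therefore continuous on the whole strategy space, and Theorem~\ref{bipartitegame} already supplies the invariance $\Phi\circ T=\Phi$.

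Next, I would apply Proposition~\ref{levelset} directly: any scrambled set $S$ (in the sense used in Section on Li--Yorke chaos) consists of mutually scrambled pairs, and the proof of Proposition~\ref{levelset} shows that every such pair lies in the same level set of $\Phi$. Since the level sets partition the strategy space, this forces $S\subset\{(X,Y):\Phi(X,Y)=c\}$ for a single $c\in\mathbb{R}$.

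The remaining step, which I expect to be the only place any real work is needed, is to show that such a level set is Lebesgue-null. The point is that $\Phi$ is a non-constant polynomial on $\mathbb{R}^N$ (where $N=\sum_i k_i+\sum_j k_j$): for any positive step sizes $\eta_1,\eta_2$ the pure quadratic terms $\tfrac{1}{\eta_1}\sum\|\vec{x}_i\|^2$ and $-\tfrac{1}{\eta_2}\sum\|\vec{y}_j\|^2$ have no possible cancellation, so $\Phi-c$ is a non-zero polynomial for every $c$. The zero set of any non-zero polynomial on $\mathbb{R}^N$ is a proper real algebraic variety, hence of Lebesgue measure zero (either by induction on $N$ using Fubini, or via Sard plus the fact that the critical set is itself a proper subvariety). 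The one mild subtlety is ensuring the argument covers the trivially degenerate game where every $A^{ij}$ is zero; there the dynamics is the identity, no non-singleton scrambled pair exists by definition, and the conclusion is vacuous. Combining these pieces yields the measure-zero claim for both the bilinear coordination game and its bipartite network generalization.
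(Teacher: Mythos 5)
Your proposal is correct and follows the same skeleton as the paper's proof: invoke Theorem~\ref{bipartitegame} for the explicit invariant $\Phi$, apply Proposition~\ref{levelset} to confine the scrambled set to a single level set, and then show that level set is Lebesgue-null. Where you differ is in the final, decisive step. The paper argues via the differential, writing $D\Phi=H(X,Y)^{\top}$ for an explicit matrix $H$ of rank at least $2$ and asserting that $D\Phi$ vanishes identically on the level set so that the level set sits in a lower-dimensional subspace; as written this is not a sound argument (the gradient of $\Phi$ does not vanish on a generic level set, and a quadric level set is a hypersurface, not a linear subspace), though the intended conclusion is right. You instead observe that $\Phi-c$ is a non-zero polynomial for every $c$ --- the positive-definite block $\tfrac{1}{\eta_1}\sum_i\|\vec{x}_i\|^2$ cannot be cancelled by the cross terms or the $\vec{y}$-block --- and quote the standard fact that the zero set of a non-zero polynomial on $\mathbb{R}^N$ has Lebesgue measure zero. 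This is the cleaner and more robust closing argument, and it also covers the degenerate case $A^{ij}=0$ without any special pleading. Your additional verification that the alternating update is a homeomorphism (a composition of unit-determinant block-triangular affine maps) is a hypothesis of Proposition~\ref{levelset} that the paper uses implicitly but never checks, so including it is a genuine improvement rather than redundancy.
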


\begin{proof}
By the Proposition \ref{levelset}, we know that the scrambled set is contained in a level set of the invariant function $\Phi$. Since $\Phi$ is a quadratic function, the differential $D\Phi$ can be written as a matrix multiplied by the vector $(X,Y)$. On the set where $\Phi(X,Y)$ is a constant, the differential $D\Phi$ is identically $\vec{0}$. By looking at the differential of $\Phi$ in the two player coordination game, we compute the differential $D\Phi=H\vec{x}$ where
\[
H=\left[
\begin{array}{cc}
\diag\{\frac{2}{\eta_1}\}&A
\\
A&\diag\{\frac{2}{\eta_2}\}
\end{array}
\right].
\]
The rank of $H$ is at least 2 since it contains a diagonal matrix $\diag\{\frac{2}{\eta_1}\}$. Therefore the set of points belongs to a lower dimensional vector space. The same argument extends to the bipartite network game since the differential $D\Phi=H'(X,Y)^{\top}$ where $H'$ contains independent row vectors. Combining with Proposition \ref{levelset}, we conclude that the level set $\{(X,Y):\Phi(X,Y)=c\}$ has measure zero.
\end{proof}

\subsection{Simulations}
 We illustrate the result of Theorem \ref{bipartitegame} by running the Alternating Gradient Ascent on the function $f(x,y)=xy$. This is a special case of the utility function $f(\vec{x},\vec{y})=\langle \vec{x},A\vec{y}\rangle$ when $\vec{x},\vec{y}$ and $A$ are all scalar. The algorithm is then written as
\begin{align}
x^{t+1}&=x^t+\eta_1y^t
\\
y^{t+1}&=y^t+\eta_2x^{t+1}.
\end{align}
Therefore, the invariant function is 
\[
\Phi(x,y)=\frac{x^2}{\eta_1}-\frac{y^2}{\eta_2}+xy,
\]
whose graph is a \textbf{hyperboloid} in Figure \ref{orbitlevel1} and \ref{orbitlevel2}.
The simulation verifies the following two facts:
\begin{enumerate}
\item Each trajectory lies on a unique level curve of the function $\Phi(x,y)$, i.e. the dynamical system is regular;
\item Regardless of whether the learning rates are small or large, the regularity always holds. 
\end{enumerate}

\begin{figure}[h]
\centering
\includegraphics[width=0.35\textwidth]{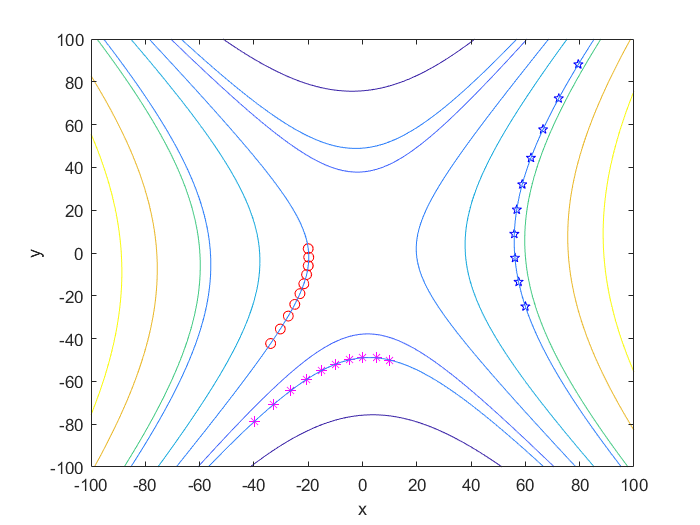}
\hspace{1.2cm}
\includegraphics[width=0.35\textwidth]{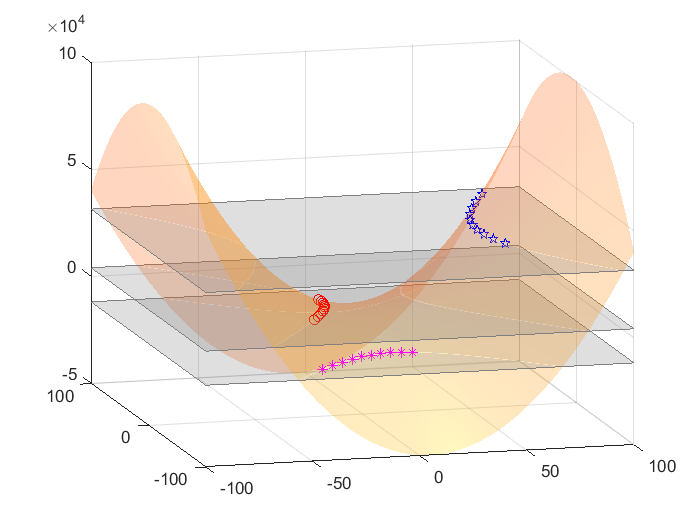}
\caption{Initial conditions $(x_0,y_0)=(60,-25)$, $(-20,2)$ and $(10,-50)$, the corresponding level sets (in descending order) are $\Phi(x,y)=31375$, $\Phi(x,y)=3940$ and $\Phi(x,y)=-12000$; learning rate $(\eta_1,\eta_2)=(0.1,0.2)$.}
\label{orbitlevel1}
\end{figure}

\begin{figure}[h]
\centering
\includegraphics[width=0.35\textwidth]{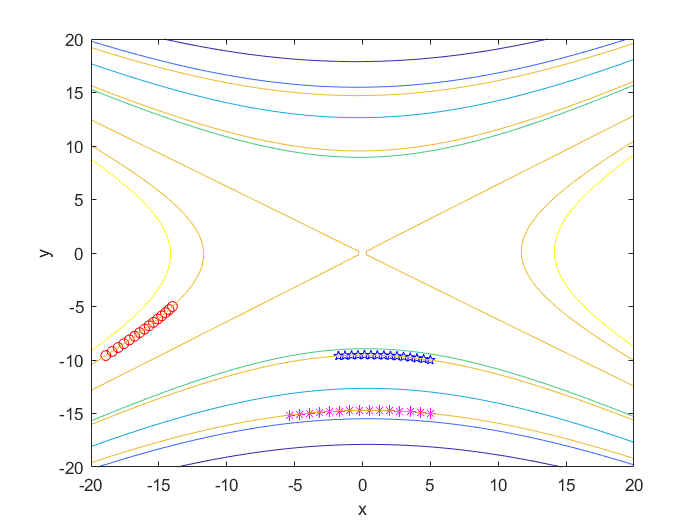}
\hspace{1.2cm}
\includegraphics[width=0.35\textwidth]{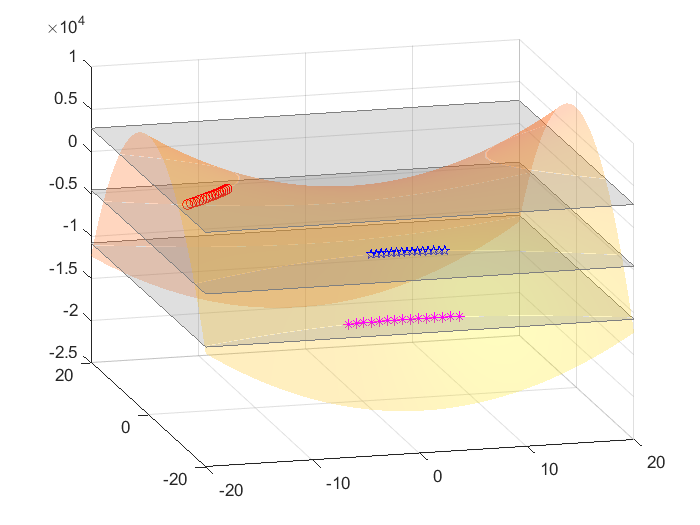}
\caption{Initial conditions $(x_0,y_0)=(-14,-5)$, $(5,-10)$ and $(5,-15)$, the corresponding level sets (in descending order) are $\Phi(x,y)=2740$, $\Phi(x,y)=-4550$ and $\Phi(x,y)=-10825$; learning rate $(\eta_1,\eta_2)=(0.05,0.02)$.}
\label{orbitlevel2}
\end{figure}

\section{Invariant Functions for First-order Methods}
So far we have found the closed form of invariant functions of alternating gradient descent in coordination games. In the rest of the paper, we explore the general theory of invariant functions for gradient descent and multiplicative weights update, both are arguably the most popular algorithms in multi-agent game theory and optimization. 

\subsection{Invariant functions for Gradient Descent}\label{invariant functions for GD}
In this section, we establish the existence and representation of invariant functions for gradient descent.

\begin{assumption}\label{assumption1}
Let $f\in C^2$ and $\abs{\frac{\partial^2f}{\partial x_i\partial x_j}(\vec{x})}\le L$ for all $\vec{x}\in\mathbb{R}^d$.
\end{assumption}
This Lipschitz-type assumption guarantees the gradient descent decreases the value of $f$ if the step-size is small.
\begin{assumption}\label{assumption2}
The gradient descent algorithm $T(\vec{x}):\mathbb{R}^d\rightarrow\mathbb{R}^d$ is a proper map.
\end{assumption}

\begin{remark}
This assumption guarantees that the optimization algorithm is a global homeomorphism if it is locally a diffeomorphism. By taking small enough step-size, one can only ensure that the Jacobian matrix of the gradient descent $T(\vec{x})=\vec{x}-\eta\nabla f(\vec{x})$, say $DT(\vec{x})$, is invertible everywhere. However, a local diffeomorphism is not necessarily globally invertible, unless the underlying space is simply-connected (each circle can be contracted to a point without being broken or blocked) and the map is proper. See \cite{Ho1975} for more details. 
\end{remark}

\begin{theorem}\label{invariant functions:GD}
Let $T(\vec{x})=\vec{x}-\eta\nabla f(\vec{x})$ be the gradient descent for the function $f:\mathbb{R}^d\rightarrow\mathbb{R}$ satisfying Assumptions \ref{assumption1} and \ref{assumption2}, and stepsize $\eta<\frac{2}{dL}$. Then $T$ is global diffeomorphism and there exist $d$ $T$-invariant functions, $\varphi_i(\vec{x}):\mathbb{R}^d\rightarrow\mathbb{R}, i\in[d]$, which are continuous and independent on an open dense subset of $\mathbb{R}^d-F$, where $F$ is the set of fixed point of $T$.
\end{theorem}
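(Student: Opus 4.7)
The plan is to first establish that $T$ is a global diffeomorphism, and then exhibit $d$ invariants via a ``wandering domain'' construction that exploits the fact that, away from fixed points, a diffeomorphism has discrete orbits in $\BR^d$.

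For the first step, the Jacobian is $DT(\vec{x}) = I - \eta \nabla^2 f(\vec{x})$. Under Assumption \ref{assumption1} each entry of $\nabla^2 f$ is bounded by $L$, which gives $\|\nabla^2 f(\vec{x})\|_{op}\le dL$ by standard norm estimates; the step-size condition $\eta < 2/(dL)$ then ensures $DT(\vec{x})$ is invertible at every $\vec{x}$, so the inverse function theorem makes $T$ a local diffeomorphism. Combined with Assumption \ref{assumption2} and the fact that $\BR^d$ is path-connected and simply connected, Theorem \ref{proper map} upgrades ``local homeomorphism $+$ proper'' to a global homeomorphism; since $T$ and its local inverses are smooth, this is a global diffeomorphism, establishing the first claim.

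For the second step, fix a non-fixed point $\vec{x}_0\in\BR^d-F$. The descent property (which follows from the step-size condition together with the Taylor corollary after Theorem \ref{Taylor}) prevents periodic orbits outside $F$, and since $T$ is a diffeomorphism the two-sided orbit $[\vec{x}_0]=\{T^k(\vec{x}_0)\}_{k\in\BZ}$ is then a discrete subset of $\BR^d$. Pick an open neighborhood $U$ of $\vec{x}_0$ small enough that $T^k(U)\cap U=\emptyset$ for every $k\ne 0$, and set $V:=\bigcup_{k\in\BZ}T^k(U)$. Then $V$ is open and $T$-invariant, and each $\vec{y}\in V$ has a unique representative $\vec{z}(\vec{y})=T^{-k(\vec{y})}(\vec{y})\in U$. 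Define
\[
\varphi_i(\vec{y}) := \bigl[\vec{z}(\vec{y})\bigr]_i,\qquad i=1,\ldots,d.
\]
These functions are $T$-invariant by construction (since $k(T(\vec{y}))=k(\vec{y})+1$ leaves $\vec{z}$ unchanged), continuous on each piece $T^k(U)$ because on it $\varphi_i$ is just the $i$-th coordinate of the smooth map $T^{-k}$, and independent on the initial chart $U$ where they reduce to Cartesian coordinates; by $T$-invariance, independence propagates to all of $V$.

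To upgrade $V$ to an open \emph{dense} subset of $\BR^d-F$, I would enlarge $U$ to a maximal wandering set via Zorn's lemma, or cover $\BR^d-F$ by countably many base points and extract density via a Baire-category argument applied to the saturations of shrinking wandering neighborhoods. I expect this last density step to be the main obstacle: ruling out orbits that recur into any chosen wandering neighborhood is what is needed for the $\varphi_i$ to remain continuous on a genuinely dense set, and this uses both the properness of $T$ (Assumption \ref{assumption2}) and the strict decrease of $f$ along non-fixed orbits, which together force every orbit to escape any compact set in at least one time direction and hence prevent the ``fundamental domain boundary'' from having interior.
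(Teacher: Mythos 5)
Your first step (invertibility of $DT=I-\eta\nabla^2 f$ under the step-size bound, plus properness and simple connectedness of $\BR^d$, plus Theorem \ref{proper map}) is exactly the paper's argument, and your wandering-neighborhood idea is also the core of the paper's construction: find a set that each non-fixed orbit meets exactly once, map each point to its orbit's representative there, and take coordinates. But there is a genuine gap precisely where you flag one. Your $V=\bigcup_k T^k(U)$ is the saturation of a single wandering neighborhood and has no reason to be dense in $\BR^d-F$, and neither ``Zorn on maximal wandering sets'' nor an unspecified Baire argument closes this. The paper closes it with an explicit \emph{fundamental set} (Definition \ref{def:fundset}, Lemmas \ref{fundlemma2} and \ref{fundlemma3}): there is an $\epsilon>0$ such that every non-fixed orbit visits $D_\epsilon=\{\vec{x}:\mathrm{dist}(\vec{x},F)\ge\epsilon\}$; $D_\epsilon$ is covered by countably many wandering neighborhoods $N_1,N_2,\dots$ by second countability; and one sets $L_r=N_r-\bigcup_{i<r}\bigcup_{n}T^n(N_i)$, $S=\bigcup_r L_r$, so that every non-fixed orbit meets $S$ exactly once. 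The representative map $\varphi$ is then defined on all of $\BR^d-F$, and continuity and local-homeomorphism (hence independence of the components $\varphi_1,\dots,\varphi_d$) hold on the open dense set $M'=\BR^d-F-\bigcup_n T^n(\partial S)$, where locally $\varphi=T^m$ for a fixed integer $m$. This countable exhaustion and the removal of the orbit of $\partial S$ is the content your proposal is missing; without it the ``open dense subset'' clause of the theorem is not established.

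A second, smaller gap: you justify the existence of the wandering neighborhood $U$ by appealing to discreteness of the two-sided orbit of $\vec{x}_0$. Discreteness of that single orbit does not yield $T^k(U)\cap U=\emptyset$ for all $k\ne0$, which is a statement about every point of $U$, not only about $\vec{x}_0$. The paper proves this (Lemma \ref{fundlemma1}) directly from the strict descent $f(T(\vec{x}))<f(\vec{x})$ off $F$ (Lemma \ref{lemma:IF of GD}): with $\Delta=f(T^{-1}(\vec{x}_0))-f(\vec{x}_0)>0$, choose $N$ inside $\{f<f(\vec{x}_0)+\Delta/3\}$ with $T^{-1}(N)$ inside $\{f>f(T^{-1}(\vec{x}_0))-\Delta/3\}$; monotonicity of $f$ along orbits then gives $T^{-m}(N)\cap N=\emptyset$ for all $m\ge1$ and hence pairwise disjointness of all iterates. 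You should route the wandering property through the descent of $f$ in this way rather than through discreteness of a single orbit.
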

\begin{remark}
The \emph{``independent"} means that the set of $d$ functions, i.e., $(\varphi_1,...,\varphi_d):\mathbb{R}^d\rightarrow\mathbb{R}^d$ consist a local diffeomorphism.
\end{remark}

Actually the above Theorem \ref{invariant functions:GD} shows that there are ``many" continuous invariant functions for the dynamical system induced by iteration of gradient descent on a \textbf{topologically large} region (an open dense subset) of $\BR^d$. In contrast to the results for coordination game with alternating play, in which case we find one specific invariant function that is continuous on the whole space $\BR^d$, we cannot conclude that the scramble set is necessarily of measure zero, since we have no information in the complement of this open dense subset mentioned in Theorem \ref{invariant functions:GD}. 

Another difficulty in understanding the connection between optimization and chaos using Theorem \ref{invariant functions:GD} is that the theorem provides a purely existence guarantee, and this make it hard to use the result in application. Then we raise the following question naturally:

\emph{Even if we cannot always find a closed form of invariant function, can we find a way to approximate it? Moreover, can this approximation (if it exists) give insight to the connection with chaos?}

The answer to the first part of the question is affirmative. On the connection with chaos, we will show that a similar result on the invariant function and non-existance of scrambled set is guaranteed if the function $f$ satisfies additional assumptions on the topology of critical points.

\begin{theorem}\label{IF:construction}
Let $f\in C^2(\mathbb{R}^d)$ and $T$ be the homeomorphism defined by a gradient descent algorithm for $f$ on $\mathbb{R}^d$. Then for any continuous function $p(\vec{x}):\mathbb{R}^d\rightarrow\mathbb{R}$, the following infinite sum,
\[
\Phi(\vec{x})=\sum_{n=-\infty}^{\infty}p(T^n(\vec{x}))(f(T^{n-1}(\vec{x}))-f(T^n(\vec{x})))
\]
is a continous $T$-invariant function in a open dense set $G$ of $\mathbb{R}^d-F$. 
\end{theorem}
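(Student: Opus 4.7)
The plan is to verify $T$-invariance by a reindexing computation, then carve out an open dense set $G \subset \mathbb{R}^d - F$ on which the two-sided series converges absolutely and locally uniformly. Invariance is essentially formal: rewriting
\[
\Phi(T(\vec{x})) = \sum_{n \in \mathbb{Z}} p(T^{n+1}(\vec{x}))\bigl(f(T^{n}(\vec{x})) - f(T^{n+1}(\vec{x}))\bigr)
\]
and substituting $m = n+1$ recovers $\Phi(\vec{x})$ term by term, so the identity $\Phi(T(\vec{x})) = \Phi(\vec{x})$ holds wherever the series is absolutely convergent. The real content of the theorem is therefore establishing convergence and continuity.

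For convergence, the decisive input is that under Assumption \ref{assumption1} and a sufficiently small stepsize $\eta$, Taylor's theorem together with the corollary after Theorem \ref{Taylor} force $f$ to be non-increasing along forward orbits of $T$, and strictly decreasing off $F$. Hence each summand $f(T^{n-1}(\vec{x})) - f(T^n(\vec{x}))$ is non-negative, and the partial sums telescope:
\[
\sum_{n=1}^{N}\bigl(f(T^{n-1}(\vec{x})) - f(T^n(\vec{x}))\bigr) = f(\vec{x}) - f(T^N(\vec{x})),
\]
\[
\sum_{n=-M}^{0}\bigl(f(T^{n-1}(\vec{x})) - f(T^n(\vec{x}))\bigr) = f(T^{-M-1}(\vec{x})) - f(\vec{x}).
\]
Consequently the positive tail is summable whenever the forward orbit of $\vec{x}$ has compact closure (so $f$ is bounded below on it), and the negative tail is summable whenever the backward orbit has compact closure. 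On any such full orbit $p$ is automatically bounded by continuity, so the weighted series converges absolutely by comparison with the unweighted telescoping bounds.

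I would therefore define $G$ to be the interior, in $\mathbb{R}^d - F$, of the set of $\vec{x}$ whose full two-sided orbit $\{T^n(\vec{x})\}_{n \in \mathbb{Z}}$ has compact closure; openness is built into the definition, and the analysis above gives absolute convergence on $G$. For density in $\mathbb{R}^d - F$, the forward-orbit side is handled by the standard stable-manifold picture for gradient descent (in the spirit of \cite{lee2016gradient,panageas2017gradient}): generically the forward orbit avoids strict saddles and converges to a local minimum, so the set of initial conditions with bounded forward orbit is dense away from the saddle portion of $F$. By Theorem \ref{invariant functions:GD}, $T^{-1}$ is also a global diffeomorphism with the same fixed-point set $F$, and applying the analogous argument to $T^{-1}$ gives density of initial conditions with bounded backward orbit. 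Baire category on $\mathbb{R}^d - F$ then yields density of the intersection, and passing to the interior gives density of $G$.

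Continuity of $\Phi$ on $G$ follows from locally uniform convergence: on any compact $K \subset G$ all full orbits lie in a single compact $K' \subset \mathbb{R}^d$, on which $|p|$ is bounded by some $M$, and the telescoping estimates bound the tails $\sum_{|n| > N} |p(T^n(\vec{x}))|\,|f(T^{n-1}(\vec{x})) - f(T^n(\vec{x}))|$ uniformly in $\vec{x} \in K$. A uniform limit of continuous functions is continuous, giving $\Phi \in C(G)$. The main obstacle is density of $G$, and specifically the backward-bounded part: backward iteration of gradient descent is effectively gradient ascent, with no a priori reason for orbits to stay in a bounded region, so the global-homeomorphism structure guaranteed by Assumption \ref{assumption2} and Theorem \ref{invariant functions:GD} is essential to transport the forward-direction density statement to the backward direction.
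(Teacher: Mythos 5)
Your reindexing argument for invariance and the telescoping identity for the partial sums match the paper, but the way you construct $G$ and establish continuity diverges from the paper's proof and contains two genuine gaps. First, the density of your $G$ fails: you define $G$ via compactness of the closure of the \emph{two-sided} orbit, and then claim density of backward-bounded initial conditions by ``applying the analogous argument to $T^{-1}$.'' But $T^{-1}$ increases $f$ along its orbits, and for a function as simple as $f(\vec{x})=\norm{\vec{x}}^2$ every nontrivial backward orbit diverges to infinity, so your $G$ is empty while the theorem asserts an open dense set. (The paper avoids this by never requiring orbit compactness; its $G$ is $(\mathbb{R}^d-F)\cap G_1\cap G_2$, where $G_1$, $G_2$ are the sets on which the limit values $f(L_{\vec{x}})$ and $f(l_{\vec{x}})$ are \emph{locally constant}, and density is proved by the dichotomy Lemmas \ref{B2x}, \ref{IF:sum3}, \ref{IF:sum4}, not by stable-manifold genericity.) Your Baire-category step also needs the two sets to be open dense or dense $G_\delta$, not merely dense.

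Second, the continuity argument is missing its key ingredient. The forward tail of the unweighted series equals $f(T^N(\vec{x}))-f(l_{\vec{x}})$, so a uniform tail bound on a compact $K\subset G$ amounts to \emph{uniform} convergence of the monotone sequence $f(T^N(\vec{x}))$ to $f(l_{\vec{x}})$ on $K$; by Dini this requires the limit function $\vec{x}\mapsto f(l_{\vec{x}})$ to be continuous on $K$, which is false in general (it jumps across stable manifolds of saddles) and is exactly what the paper's Lemmas \ref{IF:sum1}--\ref{IF:sum4} secure by restricting to the open dense set where $f(L_{\vec{x}})$ and $f(l_{\vec{x}})$ are locally constant, so that $S(\vec{x})$ is a series of nonnegative terms converging to a (locally constant, hence continuous) limit and therefore converges locally uniformly. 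Your substitute claim --- that all orbits issuing from a compact $K\subset G$ lie in a single compact $K'$ --- does not follow from your definition of $G$ (orbit closures can blow up as you move within $K$) and in any case does not by itself yield the uniform tail estimate. To repair the proof you would need to reintroduce the local-constancy dichotomy for the limit values, which is the actual technical core of the paper's argument.
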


\begin{remark}
If the series is divergent, then $\Phi(\vec{x})$ is trivially $T$-invariant.
\end{remark}

\begin{corollary}\label{IF:construction discrete crit}
Suppose that $f\in C^2(\BR^d)$ has at most countably many critical points, denoted as $F$, and all the saddle points are strict in the sense that the minimal eigenvalue of the Hessian at each saddle point is strictly less than 0. Then for any bounded continuous function $p(\vec{x}):\BR^d\rightarrow \BR$, the infinite sum 
\[
\Phi(\vec{x})=\sum_{n=-\infty}^{\infty}p(T^n(\vec{x}))(f(T^{n-1}(\vec{x}))-f(T^n(\vec{x})))
\]
is continuous on an open dense subset $G$ of $\BR^d$ and the Lebesgue measure of the complement of $G$ in $\BR^d$ equals to zero. Furthermore, if all the level sets of $\Phi$ are measure zero for some continuous function $p$, then the scrambled set of the gradient descent is of measure zero.
\end{corollary}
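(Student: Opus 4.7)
My plan is to combine Theorem \ref{IF:construction} with the strict-saddle structure of $f$ and a minor adaptation of Proposition \ref{levelset}. Theorem \ref{IF:construction} already supplies an open dense subset $G \subset \BR^d - F$ on which $\Phi$ is continuous and $T$-invariant, so the continuity assertion is immediate. What the corollary adds is (a) that $\BR^d \setminus G$ is Lebesgue-null and (b) the resulting measure-zero bound on the scrambled set.

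For (a) I would argue that continuity (and even convergence) of $\Phi$ can fail only for orbits that accumulate at a critical point of $f$. The forward telescope is dominated by $\|p\|_\infty \bigl(f(T^{-1}(\vec{x})) - \lim_n f(T^n(\vec{x}))\bigr)$ and the backward telescope is dominated by $\|p\|_\infty \bigl(\lim_{n \to -\infty} f(T^n(\vec{x})) - f(\vec{x})\bigr)$, both finite whenever the orbit stays in a region where $f$ is bounded on the relevant side. The critical set $F$ itself is countable and hence measure zero. For each non-minimum critical point $q \in F$, the strict-saddle hypothesis forces an eigenvalue of $\nabla^2 f(q)$ to be strictly negative, so $DT(q) = I - \eta \nabla^2 f(q)$ has an eigenvalue strictly greater than $1$. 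The Center Stable Manifold Theorem, deployed exactly as in the Lee--Panageas--Piliouras line of work, then produces a local $C^1$ manifold of positive codimension containing every point whose forward $T$-orbit converges to $q$. Iterating the global homeomorphism $T^{-1}$ on this local manifold keeps the global preimage a countable union of measure-zero pieces, and the same argument applied to $T^{-1}$ handles backward accumulation to $q$. Taking the countable union over $F$ preserves measure zero, giving the desired bound on $\BR^d \setminus G$.

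For (b) I would adapt Proposition \ref{levelset} to $G$. Because $T$ is a global homeomorphism of $\BR^d$ by Theorem \ref{invariant functions:GD} and because $G$ is forward- and backward-$T$-invariant, the argument of Proposition \ref{levelset} transfers verbatim: any scrambled pair lying in $G$ must sit in a common level set $\{\Phi = c\}$. Under the hypothesis that every level set of $\Phi$ is Lebesgue-null, the intersection of the scrambled set with $G$ is measure zero; combined with the already measure-zero complement $\BR^d \setminus G$ from step (a), the entire scrambled set is measure zero.

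The main obstacle will be step (a), and specifically the backward-time analysis. A strict local maximum is a non-minimum critical point under the corollary's hypothesis but is globally attracting for $T^{-1}$, so its backward basin is open of positive measure; in this regime the center-stable argument applied to $T^{-1}$ yields no codimension. I would handle such points separately by observing that backward orbits converging to a local maximum $q^\star$ keep $f(T^n(\vec{x}))$ bounded above by $f(q^\star)$, so the backward telescope still converges absolutely and the continuity criterion in the proof of Theorem \ref{IF:construction} holds uniformly on compact subsets that avoid $q^\star$ itself. Since $\{q^\star\}$ is a single point and $F$ is countable, this exceptional behavior contributes nothing to the measure. Making this pointwise reasoning uniform enough to certify that these orbits genuinely lie in the open dense $G$ provided by Theorem \ref{IF:construction}, rather than in some abstract residual set, is where I expect the most care to be required.
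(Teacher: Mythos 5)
Your proposal follows the paper's proof in its essentials: the measure-zero complement is obtained exactly as you describe, by applying the Center-Stable Manifold Theorem at the strict saddles so that $G_1$ and $G_2$ can fail only on the countable set $F$ together with the countable union of positive-codimension stable and unstable manifolds, each of which is Lebesgue-null. You are in fact more careful than the paper on the two points it passes over silently---the backward-time behavior near local maxima, which the paper absorbs into the unjustified identity $G_1=G_2=\BR^d-\bigcup_{\vec{x}^*}(W^s_{\vec{x}^*}\cup W^u_{\vec{x}^*})$ and which is resolved as you suggest because the open backward basin of a local maximum makes $f(L_{\vec{x}})$ locally constant there, and the ``furthermore'' clause about the scrambled set, which the paper's written proof does not address and which your adaptation of Proposition \ref{levelset} to the $T$-invariant set $G$ supplies.
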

This corollary shows that gradient descent is not chaotic (in the sense of Li-Yorke) almost always.

\subsection{Invariant Functions for Multiplicative Weights Update}\label{IF:MWU}
In this section, we focus on multiplicative weights update (MWU) and the invariant functions of the dynamical system induced by MWU. Formally, MWU and its linear variant $\text{MWU}_{\ell}$ are stated as follows
\begin{equation}\label{MWUexp}
x_{ij}\leftarrow \frac{x_{ij}\exp\left(-\epsilon_i\frac{\partial f}{\partial x_{ij}}(\vec{x})\right)}{\sum_sx_{is}\exp\left(-\epsilon_s\frac{\partial f}{\partial x_{is}}(\vec{x})\right)}
\end{equation}
and 
\begin{equation}\label{MWU}
x_{ij}\leftarrow x_{ij}\frac{1-\epsilon_i\frac{\partial f}{\partial x_{ij}}(\vec{x})}{1-\epsilon_i\sum_sx_{is}\frac{\partial f}{\partial x_{is}}(\vec{x})},
\end{equation}
where $f$ is a differentiable function constrained on $M=\{(x_{ij}):x_{ij}\ge0,\sum_{j=1}^{n_i}x_{ij}=1\ \ \text{for all}\ \ 1\le i\le N\}$, and $\epsilon_i$ is the learning rate of the agent $i$.

\begin{theorem}\label{IF:compact}
Let $M=\{(x_{ij}):x_{ij}\ge0,\sum_{j=1}^{n_i}x_{ij}=1\ \ \text{for all}\ \ 1\le i\le N\}$, Let $f:M\rightarrow\mathbb{R}$ be a differentiable function. Let $T$ be the MWU algorithm defined by (\ref{MWU}) or (\ref{MWUexp}). Then for small enough learning rates $\epsilon_i$, $i\in[N]$,  there exist $d=N(n_i-1)$ invariant functions which are continuous and independent on an open dense subset of $M-F$. Moreover, for any continuous function $p$ on $M$, the series
\[
\sum_{-\infty}^{\infty}p(T^n(\vec{x}))(f(T^{n-1}(\vec{x}))-f(T^{n}(\vec{x})))
\]
represents an invariant function continuous on $M-F$ where $F$ is the set of fixed points of $T$.
\end{theorem}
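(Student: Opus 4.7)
The plan is to reduce Theorem \ref{IF:compact} to the abstract machinery already developed for gradient descent in Theorem \ref{invariant functions:GD} and Theorem \ref{IF:construction}, transported to the compact product-of-simplices $M$. Three ingredients must be checked: (i) that MWU, restricted to the relative interior of each face of $M$, is a global diffeomorphism; (ii) that $f$ is monotonically non-increasing along $T$-orbits for small enough $\epsilon_i$; and (iii) that the telescoping series converges to a continuous function off the fixed point set $F$, and that one can extract $d$ independent invariant functions from it.

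For (i), I would first verify that both variants (\ref{MWU}) and (\ref{MWUexp}) preserve $M$ and each of its faces: for the exponential form this is automatic from positivity and the explicit normalization; for the linear form one needs $\epsilon_i < 1/\max_{j}\abs{\partial f/\partial x_{ij}}$ so that all numerators stay positive on any invariant compact subset. Then, working on the relative interior of a face, one computes the Jacobian $DT$ on the tangent space of the simplex and notes that $DT \to I$ as $\epsilon_i \to 0$; by compactness, for small enough learning rates $DT$ is invertible everywhere, so $T$ is a local diffeomorphism. Each face has simply-connected relative interior (it is an open convex set), and since $T$ leaves each face invariant and $M$ is compact, $T$ restricted to a face is proper (preimages of relative-interior compacta stay away from the lower-dimensional strata by invariance). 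Theorem \ref{proper map} then upgrades $T$ to a global diffeomorphism of the relative interior.

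For (ii) and (iii), I would use Taylor's theorem (Theorem \ref{Taylor}) to expand $f \circ T$ and show that the first-order change is non-positive (the Shahshahani-type descent form familiar from replicator dynamics) while the quadratic remainder is dominated by $\epsilon_i^2$ times a bounded constant; for $\epsilon_i$ small, $f$ is non-increasing along orbits. This means the positive-index part $\sum_{n\ge 1}\abs{f(T^{n-1}\vec{x})-f(T^n\vec{x})}$ telescopes to at most $f(\vec{x})-\inf_M f$, and since $T$ is a diffeomorphism the same holds in reverse time for $T^{-1}$, giving a finite total variation bounded by the oscillation of $f$ on $M$. For any continuous $p$, this bounds the series of absolute values by $2\norm{p}_{\infty}(\sup_M f - \inf_M f)$, so $\Phi(\vec{x}) = \sum_{n \in \mathbb{Z}} p(T^n\vec{x})(f(T^{n-1}\vec{x})-f(T^n\vec{x}))$ converges absolutely. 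Invariance $\Phi \circ T = \Phi$ is an index shift. For continuity, I would show that the convergence is uniform on compact subsets of $M \setminus F$: away from fixed points, orbits escape any fixed small neighborhood of $F$ at a rate that is locally uniform, so tails can be controlled just as in Theorem \ref{IF:construction}. The $d$ independent invariants are obtained as in Theorem \ref{invariant functions:GD}: choose $d$ different bounded continuous functions $p_1,\ldots,p_d$ (e.g.\ generic smooth bumps along coordinate directions) and use an open-dense argument, invoking the inverse function theorem at a generic non-fixed point, to conclude that the resulting $(\Phi_1,\ldots,\Phi_d)$ have surjective differential on an open dense subset of $M \setminus F$.

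The main obstacle is the continuity of $\Phi$ as one approaches $F$. The telescoping bound gives absolute convergence but not uniform control on a neighborhood of a fixed point: an orbit that lingers near $x_* \in F$ can accumulate arbitrarily many small terms whose sum is not obviously continuous in the initial condition. The fix, parallel to the GD case, is to restrict to the open set $G \subseteq M \setminus F$ on which both the forward and backward orbits eventually leave any prescribed neighborhood of $F$, prove $G$ is open and dense using the fact that $T$ is a diffeomorphism with $f$ strictly decreasing off $F$ (so no non-trivial orbit is contained in a neighborhood of $F$), and establish uniform convergence of the tails on compact subsets of $G$. Once continuity is in hand, the independence and the open-dense conclusions transfer directly from the Euclidean argument of Theorem \ref{invariant functions:GD}.
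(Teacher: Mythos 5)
Your overall architecture matches the paper's: small learning rates give both the descent property $f(T(\vec{x}))<f(\vec{x})$ off $F$ and invertibility of the Jacobian, compactness of $M$ upgrades both to uniform statements and gives properness, Theorem \ref{proper map} yields a global diffeomorphism, and the telescoping structure gives absolute convergence and invariance of the series. However, two of your steps diverge from the paper in ways that leave genuine gaps.

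First, the source of the $d$ independent invariants. You propose to manufacture them from the series itself by choosing $d$ ``generic'' weights $p_1,\dots,p_d$ and invoking the inverse function theorem. But the functions $\Phi_i$ are only ever shown to be continuous — they are infinite sums whose formal derivatives involve $DT^n$ for all $n\in\mathbb{Z}$, and no differentiability is established — so there is no ``surjective differential'' to speak of, and the independence of generic choices of $p$ is asserted rather than proved. The paper sidesteps this entirely: it constructs a \emph{fundamental set} $S$ (Definition \ref{def:fundset}, Lemma \ref{fundlemma3x}), i.e.\ a set met exactly once by each non-fixed orbit, and defines $\varphi(\vec{x})$ to be the unique representative of the orbit of $\vec{x}$ in $S$. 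Locally $\varphi=T^m$ for a fixed integer $m$, hence a local homeomorphism, so its $d$ coordinate functions are automatically continuous, independent, and $T$-invariant on an open dense subset of $M-F$ (Lemma \ref{fundlemma4x}). The series supplies only the one explicitly representable invariant; the $d$ independent ones come from the orbit-representative map.

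Second, continuity of the series near $F$. Your proposed fix — restrict to the set where forward and backward orbits escape a fixed neighborhood of $F$ at a locally uniform rate, and control the tails there — fails near strict saddles: every individual non-fixed orbit does escape (this is Lemma 2.3 of \cite{Stebe}), but the escape time is not locally bounded, since initial conditions near the stable set of a saddle linger arbitrarily long, so your set of ``uniform escape'' need not contain a neighborhood of such points and the tail estimate does not close. The paper's mechanism is different: it shows that the limit values $f(L_{\vec{x}})$ and $f(l_{\vec{x}})$ take only finitely many values (they are values of $f$ on $F$) and are locally constant on an open dense set $G$ (Lemma \ref{B2} and its companions). On $G$ the unweighted telescoping series $S(\vec{x})$ has nonnegative terms and converges pointwise to the locally constant, hence continuous, function $f(L_{\vec{x}})-f(l_{\vec{x}})$; Dini's theorem then gives uniform convergence on compact subsets of $G$, and the weighted series converges uniformly by comparison with $\sup_M\abs{p}\cdot S(\vec{x})$. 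You need this local-constancy-plus-Dini argument; the uniform-escape route is a dead end.
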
  
\begin{remark}
The proof of Theorem \ref{IF:compact} relies on the compactness of $M$, in contrast to the proof of Theorem \ref{IF:construction} that introduces a regularity condition for $f$.  
\end{remark}

\subsection{Invariant Functions for Manifold Gradient Descent}
We generalize the existence of invariant functions to manifold gradient descent. With the former treatment of gradient descent and multiplicative weights update, we can accomplish this generalization with basic concepts of Riemannian manifold. We start with fundamentals on optimization on Riemannian manifold, and we recommend \cite{AMT} as a compact reference for the basics of Riemannian manifold optimization.
\paragraph{Riemannian Metric} Let $M$ be a $d$-dimensional manifold. At each point $\vec{x}\in M$ there associates a $d$-dimensional real vector space $T_{\vec{x}}M$, called the tangent space at $\vec{x}$. The \emph{Riemannian metric} is an inner product $\langle,\rangle_{\vec{x}}$ on the tangent space, with norm $\norm{\cdot}_{\vec{x}}=\sqrt{\langle,\rangle_{\vec{x}}}$.
\paragraph{Riemannian Gradient} The \emph{Riemannian gradient} $\grad f(\vec{x})$ of $f$ at $\vec{x}\in M$ is defined to be the unique vector in $T_{\vec{x}}M$ satisfying $Df(\vec{x})[\vec{s}]=\langle\grad f(\vec{x}),\vec{s}\rangle_{\vec{x}}$, where $Df(\vec{x})[\vec{s}]$ is the directional derivative of $f$ at $\vec{x}$ along $\vec{s}$.

\paragraph{Retraction} A \emph{retraction} on a manifold $M$ is a smooth mapping $\Retr_{\vec{x}}$ satisfying following properties:
\begin{enumerate}
\item $\Retr_{\vec{x}}(\vec{0})=\vec{x}$, where $\vec{0}$ is the zero vector in $T_{\vec{x}}M$.
\item The differential of $\Retr_{\vec{x}}$ at $\vec{0}$ is the identity map.
\end{enumerate}
Then the Riemannian gradient descent with stepsize $\eta$ is defined to be
\[
\vec{x}_{k+1}=\Retr_{\vec{x}_k}(-\eta\grad f(\vec{x}_k)).
\]
\begin{assumption}
There exist $b>0$ and $L>0$ such that for all $\vec{x}\in M$ and $\vec{s}\in T_{\vec{x}}M$ with $\norm{\vec{s}}<b$, 
$
\norm{\nabla \hat{f}_{\vec{x}}(\vec{s})-\nabla\hat{f}_{\vec{x}}(\vec{0})}\le L\norm{\vec{s}},
$
where $\hat{f}_{\vec{x}}=f\circ\Retr_{\vec{x}}$.
\end{assumption}

\begin{remark}
This Lipschitz type assumption (from \cite{CB}) gives criterion of the step-size used in the following theorem.
\end{remark}

The following theorem is about the existence of invariant functions for Riemannian gradient descent.

\begin{theorem}\label{IF:manifold}
Let $f:M\rightarrow\mathbb{R}$ be a $C^2$ function. The Riemannian gradient descent is 
\[
T(\vec{x}_k)=\Retr_{\vec{x}_k}(-\eta\grad f(\vec{x}_k)). 
\]
Then for step-size $\eta<1/L$ and any continuous function $p(\vec{x})$ on $M$,
\[
\Phi(\vec{x})=\sum_{-\infty}^{\infty}p(T^n(\vec{x}))(f(T^{n-1}\vec{x})-f(T^n(\vec{x})))
\]
is $T$-invariant on an open dense subset of $M-F$, where $F$ is the set of fixed points of $T$.
\end{theorem}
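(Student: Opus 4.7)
The plan is to mirror, with Riemannian modifications, the structure used for Theorems \ref{IF:construction} and \ref{IF:compact}. The three necessary ingredients are: (i) that $T$ is a global homeomorphism on $M$, so $T^n$ makes sense for every $n\in\BZ$; (ii) a descent inequality of the form $f(T(\vec{x}))\le f(\vec{x})-\frac{\eta}{2}\norm{\grad f(\vec{x})}_{\vec{x}}^2$ valid for $\eta<1/L$, so every term $f(T^{n-1}(\vec{x}))-f(T^n(\vec{x}))$ is nonnegative; and (iii) telescoping together with continuity to establish both absolute convergence on an open dense subset and $T$-invariance via reindexing wherever the sum converges.

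First I would verify (i). Using the retraction identities $\Retr_{\vec{x}}(\vec{0})=\vec{x}$ and $D\Retr_{\vec{x}}(\vec{0})=\mathrm{Id}$, together with the Lipschitz-type assumption on $\hat{f}_{\vec{x}}=f\circ\Retr_{\vec{x}}$ imported from \cite{CB}, the Jacobian of $T$ differs from the identity of $T_{\vec{x}}M$ by at most $O(\eta L)$, so for $\eta<1/L$ the map $T$ is a local diffeomorphism at every $\vec{x}\in M$. Globalization from local to global diffeomorphism then follows by the same properness/simple-connectedness route used earlier via Theorem \ref{proper map}, giving a well-defined $T^{-1}$.

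Next I would establish (ii), the manifold descent lemma. Applying Taylor's Theorem to $\hat{f}_{\vec{x}}$ along the tangent ray $\vec{s}\mapsto -\eta\grad f(\vec{x})$ and using the hypothesis $\norm{\nabla\hat{f}_{\vec{x}}(\vec{s})-\nabla\hat{f}_{\vec{x}}(\vec{0})}\le L\norm{\vec{s}}$ combined with $\nabla\hat{f}_{\vec{x}}(\vec{0})=\grad f(\vec{x})$ yields
\[
f(T(\vec{x}))\le f(\vec{x})-\eta\norm{\grad f(\vec{x})}_{\vec{x}}^2+\frac{L\eta^2}{2}\norm{\grad f(\vec{x})}_{\vec{x}}^2,
\]
which for $\eta<1/L$ is bounded above by $f(\vec{x})-\frac{\eta}{2}\norm{\grad f(\vec{x})}_{\vec{x}}^2$. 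Thus each summand is nonnegative and the partial telescoping sums give $\sum_{n=0}^N(f(T^{n-1}(\vec{x}))-f(T^n(\vec{x})))=f(T^{-1}(\vec{x}))-f(T^N(\vec{x}))$ and analogously for the backward tail.

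For (iii) I would argue convergence and invariance on a set $G$. Invariance is automatic: if the series converges absolutely at $\vec{x}$, reindexing $m=n+1$ in the sum for $\Phi(T(\vec{x}))$ turns it into $\Phi(\vec{x})$. Openness of the convergence set follows from continuity of $T$, $f$, $p$ together with the nonnegativity of the summands (bounds on nearby orbits control partial sums uniformly). Denseness in $M-F$ is the main technical hurdle, and this is where I expect the real work to lie: away from $F$ the descent lemma forces $f$ to strictly decrease along forward orbits, and one must show that for a dense family of starting points the forward $f$-values stay bounded below and the backward $f$-values stay bounded above, so that the telescoped mass $f(T^{-1}(\vec{x}))-\lim_{N\to\infty}f(T^N(\vec{x}))$ and its backward counterpart are finite. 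The argument will adapt the density step of Theorem \ref{IF:construction} by exploiting the retraction and the Lipschitz bound on $\hat{f}_{\vec{x}}$ in local charts; the absence of global compactness (unlike Theorem \ref{IF:compact}) and of the Euclidean linear structure (unlike Theorem \ref{IF:construction}) means one must carry out the covering/approximation argument intrinsically, which is the principal obstacle I anticipate.
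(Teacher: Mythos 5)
Your steps (i) and (ii) match the paper: the paper also gets a global diffeomorphism from the local-diffeomorphism-plus-proper-map route via Theorem \ref{proper map}, and it imports the descent inequality as Lemma C.2 of \cite{CB}, so that every summand $f(T^{n-1}(\vec{x}))-f(T^{n}(\vec{x}))$ is nonnegative and invariance follows by reindexing wherever the series converges. The gap is in step (iii), precisely at the point you flag as ``the principal obstacle'' but do not resolve. The open dense subset in the paper is \emph{not} the set where the series converges (under the implicit boundedness of $f$ the telescoped series converges on all of $M-F$, to $f(L_{x})-f(l_{x})$ where $L_{x}$ and $l_{x}$ are the backward and forward cluster sets of the orbit, both contained in $F$). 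The real issue is \emph{continuity} of $\Phi$, and your assertion that ``bounds on nearby orbits control partial sums uniformly'' is exactly what fails in general: pointwise convergence of a series of nonnegative continuous terms does not give a continuous limit or local uniform convergence.

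The paper's missing ingredient is the following. One first shows that $x\mapsto f(L_{x})$ and $x\mapsto f(l_{x})$ are locally constant on open dense subsets $G_{1}$ and $G_{2}$ of $M$; this uses that these values lie in $f(F)$, which is assumed to take only finitely many values, together with a maximality argument (Lemmas \ref{B2xx} and their companions). On $G=(M-F)\cap G_{1}\cap G_{2}$ the telescoping series $S(x)=\sum_{n}\bigl(f(T^{n-1}(x))-f(T^{n}(x))\bigr)$ is a series of nonnegative continuous terms converging pointwise to the locally constant (hence continuous) function $f(L_{x})-f(l_{x})$, so by a Dini-type argument it converges uniformly on a compact neighborhood $\bar{W}$; multiplying by a bounded continuous $p$ preserves uniform convergence, and $\Phi$ is then continuous at each point of $G$. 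Without the local constancy of the orbit-limit values there is no mechanism to produce the open dense set or the uniform convergence, so your plan as written cannot be completed; you would need to supply these cluster-set lemmas (or an equivalent device) to close the argument.
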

\subsection{Classification for Orbits}
 We now give a further explanation on the potential applications of the abundant invariant functions implied by Theorem \ref{invariant functions:GD} and \ref{IF:compact}. As mentioned before, if $T$ is an update rule on a $d$-dimensional space, then the set of orbits can be equivalently described by values of the $d$ invariant functions. Throughout this section, we denote $M$ the Euclidean space or the product of simplices. With the aid of these invariant functions, we can define an oracle classifier to tell if two points $\vec{x}$ and $\vec{y}$ are on the same orbit. We formulate this in the following proposition.

\begin{proposition}\label{applicationxxx}
Suppose function $f:M\rightarrow\mathbb{R}$ be a differentiable function defined on a $d$-dimensional space $M$. Let $T$ be the optimization algorithm that is a global homeomorphism on $M$. Then there is a mapping $\varphi(\vec{x}):M-F\rightarrow M$ such that $\vec{x},\vec{y}\in M$ are on the same orbit if and only if $\varphi(\vec{x})=\varphi(\vec{y})$, where $F$ is the set of fixed points of $T$.
\end{proposition}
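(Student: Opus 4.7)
My plan is to assemble the classifier $\varphi$ from the $d$ independent $T$-invariant functions provided by Theorem \ref{invariant functions:GD} (for gradient descent on $\BR^d$) or Theorem \ref{IF:compact} (for MWU on a product of simplices). Write these as $\varphi_1,\ldots,\varphi_d:M\to\BR$, continuous and independent on an open dense subset $G\subset M-F$. I would define
\[
\varphi(\vec{x})=(\varphi_1(\vec{x}),\ldots,\varphi_d(\vec{x})),
\]
which lives naturally in $\BR^d$; when $M\ne\BR^d$ (e.g., a product of simplices), post-compose with a fixed continuous injection of $\BR^d$ into the interior of $M$ such as a softmax-type parametrization. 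On the exceptional set $(M-F)\setminus G$, which by Corollary \ref{IF:construction discrete crit} has Lebesgue measure zero in the gradient descent setting, I would extend $\varphi$ either by continuity or by any measurable selector picking one representative per orbit; this does not affect the biconditional, which only requires $\varphi$ to be orbit-constant and orbit-separating.

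The forward implication is immediate from invariance: if $\vec{y}=T^k(\vec{x})$ for some $k\in\BZ$, iterating the identity $\varphi_i\circ T=\varphi_i$ yields $\varphi_i(\vec{y})=\varphi_i(\vec{x})$ for each $i\in[d]$, and hence $\varphi(\vec{y})=\varphi(\vec{x})$.

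The reverse implication $\varphi(\vec{x})=\varphi(\vec{y})\Rightarrow \vec{y}\in[\vec{x}]$ is the substantive content and proceeds in two steps. First, independence of $\varphi_1,\ldots,\varphi_d$ on $G$ makes $\varphi$ a local diffeomorphism there, so every level set $\varphi^{-1}(c)\cap G$ is zero-dimensional, i.e., a countable discrete subset of $M-F$. The orbit $[\vec{x}]$ sits inside the level set through $\vec{x}$ by the forward direction. Second, to upgrade inclusion to equality I would invoke the parametric family
\[
\Phi_p(\vec{x})=\sum_{n\in\BZ}p(T^n(\vec{x}))(f(T^{n-1}(\vec{x}))-f(T^n(\vec{x})))
\]
from Theorem \ref{IF:construction}: given disjoint orbits $[\vec{x}]\ne[\vec{y}]$ (each a countable $T$-invariant discrete set), choose a continuous bump $p$ supported on a neighborhood of $[\vec{x}]$ and vanishing on a neighborhood of $[\vec{y}]$, so that $\Phi_p$ takes unequal values at $\vec{x}$ and $\vec{y}$. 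If the baseline $\varphi$ fails to separate two orbits, I would substitute an appropriate $\Phi_p$ for one of its components while preserving continuity and independence on a possibly smaller open dense subset of $M-F$.

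The main obstacle is precisely this substitution step: the $d$ baseline invariants of Theorems \ref{invariant functions:GD} and \ref{IF:compact} are only guaranteed to separate orbits locally (from the local diffeomorphism structure), and one must promote this to a global separation while keeping the target dimension equal to $d$. The flexibility in choosing $p$ in $\Phi_p$ supplies enough invariants to distinguish any pair of distinct orbits, but verifying that the component-by-component replacement can be performed \emph{simultaneously} across all orbits without destroying continuity or independence on an open dense set is the most delicate point and is where a careful dimension-counting or transfinite-selection argument is needed.
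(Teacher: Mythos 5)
Your forward implication is fine, but your reverse implication has a genuine gap that you yourself flag, and the gap comes from missing the structural fact the paper relies on: the $d$ invariant functions of Theorem \ref{invariant functions:GD} are not arbitrary independent invariants --- they are, by construction (Definition \ref{def:fundset} and Lemmas \ref{fundlemma3}--\ref{fundlemma4}), the $d$ coordinates of the map that sends $\vec{x}$ to the \emph{unique} point of the orbit $[\vec{x}]=\{T^n(\vec{x})\}_{n\in\BZ}$ lying in a fundamental set $S$. This is why the proposition's codomain is $M$ rather than $\BR^d$ (a detail you noticed and worked around with an injection $\BR^d\hookrightarrow M$, which is a sign the intended construction was missed). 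With that definition, $\varphi(\vec{x})$ is itself an element of $[\vec{x}]$, so $\varphi(\vec{x})=\varphi(\vec{y})$ means the two orbits share a point; since orbits of a bijection partition the space, $[\vec{x}]=[\vec{y}]$ follows immediately. No separation argument is needed at all.

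By contrast, your route cannot close. Local independence of $\varphi_1,\dots,\varphi_d$ on $G$ only makes the level sets of $\varphi$ locally discrete; a discrete level set can still contain many distinct orbits, so ``orbit $\subset$ level set'' does not upgrade to equality. Your proposed fix --- choosing bump functions $p$ so that $\Phi_p$ separates any two given orbits, then substituting such $\Phi_p$'s into the components of $\varphi$ --- would have to separate uncountably many pairs of orbits simultaneously with only $d$ functions while preserving continuity and independence on an open dense set, and you give no argument for this; as stated it is an unproved (and unnecessary) claim. Replace the whole second step by the observation that $\varphi$ is the orbit-representative selector, and the proposition follows in one line.
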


\begin{proof}
Define $\varphi(\vec{x})=(\varphi_1(\vec{x}),...,\varphi_d(\vec{x}))$ where $\varphi_i(\vec{x})$ for $i\in[d]$ are the functions proven to exist in Theorem \ref{invariant functions:GD}. 
 There exist invariant functions $\varphi_1(\vec{x}),...,\varphi_d(\vec{x})$. Denote $\varphi(\vec{x})=(\varphi_1(\vec{x}),...,\varphi_d(\vec{x}))$, from the construction of $\varphi(\vec{x})$, we know that $\varphi(\vec{x})$ is the unique intersection of $\{T^n(\vec{x})\}_{n\in\mathbb{Z}}$. So $\varphi(\vec{x})=\varphi(\vec{y})$ if and only if $\vec{x}$ and $\vec{y}$ belong to the same orbit, furthermore, this means that $\varphi_i(\vec{x})=\varphi_i(\vec{y})$ for all $i\in[d]$ if and only if $\vec{x}$ and $\vec{y}$ belong to the same orbit.
\end{proof}

\section{Conclusion}
In this paper, we study the generic existence of invariant functions for first-order methods in non-convex optimization settings and games. For the easier, (optimization setting), we prove that the existence of a maximal number of invariant functions (as many as the dimension of the space). 
 For the case games, e.g. two player/network  network coordination games, we compute  these invariant functions in closed form.  
 These invariant functions effectively constrain the system away from uncontrollable and chaotic behavior.
  Thus, our results present a middle, and largely unexplored, ground between totally convergent dynamics (e.g. via the use of a Lyapunov/potential function) and total unpredictability of optimization driven dynamics. 

\bibliography{ms,bibli,refer,refer2}
\bibliographystyle{plain}
\newpage
\appendix
\section{Missing proofs of Section 4}

\noindent
\textbf{Proof of Theorem \ref{bipartitegame}}
\begin{proof}
Inspired by the proof of Lemma 1 and Lemma 7 in \cite{BGP}, we complete the proof in the following way. 
Denote $\vec{A}=(A^{ij})$ the block matrix with blocked entries the matrices $A^{ij}$. Notice that 
\[
\norm{X}^2=\sum_{i=1}^n\norm{\vec{x}_i}^2, \ \ \text{and}\ \ \norm{Y}^2=\sum_{j=1}^m\norm{\vec{y}_j}^2.
\]
It suffices to show that
\begin{align}
\frac{\norm{X^{t+1}}^2}{\eta_1}-\frac{\norm{Y^{t+1}}^2}{\eta_2}+\langle X^{t+1},\vec{A}Y^{t+1}\rangle
=\frac{\norm{X^t}^2}{\eta_1}-\frac{\norm{Y^t}^2}{\eta_2}+\langle X^t,\vec{A}Y^t\rangle.
\end{align}

Subtracting $\norm{X^t}^2$ from $\norm{X^{t+1}}^2$ and $\norm{Y^t}^2$ from $\norm{Y^{t+1}}^2$, we have
\begin{align}
\norm{X^{t+1}}^2-\norm{X^t}^2
&=\norm{X^t+\eta_1\vec{A}Y^t}^2-\norm{X^t}^2
\\
&=\langle X^t+\eta_1\vec{A}Y^t,X^t+\eta_1\vec{A}Y^t\rangle-\norm{X^t}^2
\\
&=\langle\eta_1\vec{A}Y^t,\eta_1\vec{A}Y^t\rangle+2\eta_1\langle X^t,\vec{A}Y^t\rangle
\\
&=\eta_1\langle X^{t+1}-X^t,\vec{A}Y^t\rangle+2\eta_1\langle X^t,\vec{A}Y^t\rangle
\\
&=\eta_1\langle X^{t+1}+X^t,\vec{A}Y^t\rangle.
\end{align}

and
\begin{align}
\norm{Y^{t+1}}^2-\norm{Y^t}^2
&=\norm{Y^t+\eta_2\vec{A}^{\top}X^{t+1}}^2-\norm{Y^t}^2
\\
&=\norm{Y^t}^2+2\langle Y^t,\eta_2\vec{A}^{\top}X^{t+1}\rangle
+\norm{\eta_2\vec{A}^{\top}X^{t+1}}^2-\norm{Y^t}^2
\\
&=\eta_2\langle \vec{A}^{\top}X^{t+1},\eta_2\vec{A}^{\top}X^{t+1}\rangle
+2\eta_2\langle Y^t,\vec{A}^{\top}X^{t+1}\rangle
\\
&=\eta_2\langle \vec{A}^{\top}X^{t+1},Y^{t+1}-Y^t\rangle
+2\eta_2\langle X^{t+1},\vec{A}Y^t\rangle
\\
&=\eta_2\langle\vec{A}^{\top}X^{t+1},Y^{t+1}\rangle
-\eta_2\langle \vec{A}^{\top}X^{t+1},Y^{t}\rangle
+2\eta_2\langle X^{t+1},\vec{A}Y^{t}\rangle
\\
&=\eta_2\langle\vec{A}^{\top}X^{t+1},Y^{t+1}\rangle
+\eta_2\langle X^{t+1},\vec{A}Y^t\rangle
\\
&=\eta_2\langle X^{t+1},\vec{A}(Y^{t+1}+Y^t)\rangle.
\end{align}
The difference
\begin{align}
\frac{\norm{X^{t+1}}^2-\norm{X^t}^2}{\eta_1}-\frac{\norm{Y^{t+1}}^2-\norm{Y^t}^2}{\eta_2}
=\langle X^t,\vec{A}Y^t\rangle-\langle X^{t+1},\vec{A}Y^{t+1}\rangle
\end{align}
implies
\begin{align}
\frac{\norm{X^{t+1}}^2}{\eta_1}-\frac{\norm{Y^{t+1}}^2}{\eta_2}+\langle X^{t+1},\vec{A}Y^{t+1}\rangle
=\frac{\norm{X^t}^2}{\eta_1}-\frac{\norm{Y^t}^2}{\eta_2}+\langle X^t,\vec{A}Y^t\rangle.
\end{align}
The proof completes.
\end{proof}

\section{Missing proofs of Section 5}
We next finish the proof of Theorem \ref{invariant functions:GD}. Before proving the technical lemmas, we give a formal definition of "Fundamental Set" which plays an essential role in the proof.

\begin{definition}[Fundamental set, \cite{Stebe}]\label{def:fundset}
A \emph{fundamental set} $S$ for $T$ on $M$ is a subset of $M$ satisfying the following: $S$ contains no fixed point of $T$ but if $\vec{x}$ is not a fixed point of $T$, $T^n(\vec{x})\in S$ for a single integer $n$ depending on $S$ and $\vec{x}$.
\end{definition}
Roughly speaking, a fundametal set is a single piece in $M$ such that each orbit $\{T^n(\vec{x})\}$ meets $S$ at most once. 

\begin{lemma}\label{lemma:IF of GD}
Let the stepsize $\eta<\frac{2}{dL}$. Then $f(\vec{x}-\eta\nabla f(\vec{x}))<f(\vec{x})$ for all $\vec{x}\in \mathbb{R}^d$ and $\vec{x}$ is not a fixed point.
\end{lemma}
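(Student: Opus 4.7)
The plan is to apply Taylor's theorem (Theorem \ref{Taylor}) to second order at $\vec{x}$ with increment $\vec{h} = -\eta \nabla f(\vec{x})$, using the corollary to control the remainder, and then show that the quadratic decrease term dominates the remainder when $\eta < 2/(dL)$.

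Concretely, I would first note that $\vec{x}$ is a fixed point of $T$ if and only if $\nabla f(\vec{x}) = \vec{0}$, so the hypothesis ``$\vec{x}$ is not a fixed point'' is equivalent to $\nabla f(\vec{x}) \ne \vec{0}$, and in particular $\|\nabla f(\vec{x})\|_2^2 > 0$. Then, applying Theorem \ref{Taylor} with $k = 1$, $\vec{a} = \vec{x}$, $\vec{h} = -\eta \nabla f(\vec{x})$, and the convex set $S = \mathbb{R}^d$, I would obtain
\[
f(\vec{x} - \eta \nabla f(\vec{x})) = f(\vec{x}) - \eta \|\nabla f(\vec{x})\|_2^2 + R_{\vec{x},1}(-\eta \nabla f(\vec{x})).
\]
By Assumption \ref{assumption1}, every second-order partial derivative of $f$ is bounded by $L$, so the corollary to Taylor's theorem yields
\[
\bigl|R_{\vec{x},1}(-\eta \nabla f(\vec{x}))\bigr| \le \frac{L}{2} \bigl\|\eta \nabla f(\vec{x})\bigr\|_1^2 = \frac{L \eta^2}{2}\, \|\nabla f(\vec{x})\|_1^2,
\]
where $\|\cdot\|_1$ is the $\ell^1$-norm used in the corollary's definition of $\|\vec{h}\|$.

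The second key step is to convert the $\ell^1$-norm bound into an $\ell^2$-norm bound using the elementary inequality $\|\vec{v}\|_1^2 \le d \|\vec{v}\|_2^2$ for $\vec{v} \in \mathbb{R}^d$ (a direct consequence of Cauchy--Schwarz). Substituting, I would get
\[
f(\vec{x} - \eta \nabla f(\vec{x})) \le f(\vec{x}) - \eta\!\left(1 - \frac{dL\eta}{2}\right)\|\nabla f(\vec{x})\|_2^2 .
\]
Under the hypothesis $\eta < 2/(dL)$ the factor $1 - dL\eta/2$ is strictly positive, and since $\|\nabla f(\vec{x})\|_2^2 > 0$ at a non-fixed point, the right-hand side is strictly less than $f(\vec{x})$, completing the proof.

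I do not expect a serious obstacle here; everything follows by one application of Taylor's theorem and a norm-equivalence inequality. The only mild subtlety is bookkeeping between the $\ell^1$-norm that appears in the corollary to Theorem \ref{Taylor} and the Euclidean norm that naturally appears in $\langle \nabla f(\vec{x}), \nabla f(\vec{x})\rangle$; this is precisely where the dimension factor $d$ in the step-size bound $\eta < 2/(dL)$ comes from, rather than the more familiar $\eta < 2/L$ that would appear under a spectral-norm Lipschitz assumption on $\nabla f$.
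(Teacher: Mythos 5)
Your proposal is correct and follows essentially the same route as the paper's proof: a first-order Taylor expansion with Lagrange remainder, the bound $\abs{R_{\vec{x},1}}\le \frac{L\eta^2}{2}\norm{\nabla f(\vec{x})}_1^2$ (which the paper re-derives by hand via the multi-index sum rather than citing the corollary), and the $\ell^1$--$\ell^2$ norm comparison $\norm{\vec{v}}_1^2\le d\norm{\vec{v}}_2^2$ to obtain the threshold $\eta<\frac{2}{dL}$. No substantive differences.
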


\begin{proof}
According to Taylor's theorem \ref{Taylor}, letting
$
\vec{a}=\vec{x} \ \ \ \text{and}\ \ \ \vec{h}=-\eta\nabla f(\vec{x}),
$
 we have 
 $
 \sum_{|\alpha|\le 1}\frac{\partial^{\alpha}f(\vec{a})}{\alpha!}\vec{h}^{\alpha}=f(\vec{x})-\eta\sum_{i=1}^d\left(\frac{\partial f}{\partial x_i}(\vec{x})\right)^2
 $
 and 
 $
 R_{\vec{a},1}(\vec{h})=R_{\vec{x},1}(-\eta\nabla f(\vec{x})).
 $
 Notice that
 \begin{align}
 R_{\vec{x},1}(-\eta\nabla f(\vec{x}))&\le \abs{R_{\vec{x},1}(-\eta\nabla f(\vec{x}))}
 \\
 &=\abs{\sum_{|\alpha|=2}\partial^{\alpha}f(\vec{x}-c\eta\nabla f(\vec{x}))\frac{(-\eta\nabla f(\vec{x}))^{\alpha}}{\alpha!}}
 \\
 &\le \sum_{|\alpha|=2}\abs{\partial^{\alpha}f(\vec{x}-c\eta\nabla f(\vec{x}))\frac{(-\eta\nabla f(\vec{x}))^{\alpha}}{\alpha!}}
 \\
 &= \sum_{|\alpha|=2}\abs{\partial^{\alpha}f(\vec{x}-c\eta\nabla f(\vec{x}))}\cdot\abs{\frac{(-\eta\nabla f(\vec{x}))^{\alpha}}{\alpha!}}
 \\
 &\le L\sum_{|\alpha|=2}\abs{\frac{(-\eta\nabla f(\vec{x}))^{\alpha}}{\alpha!}} 
 \\
 &=L\eta^2\sum_{|\alpha|=2}\abs{\frac{(\nabla f(\vec{x}))^{\alpha}}{\alpha!}}
 \end{align}
 \begin{align}
 &=\frac{L\eta^2}{2}\left(\sum_{i=1}^d\abs{\frac{\partial f}{\partial x_i}(\vec{x})}\right)^2,
 \end{align}
 where the last equality holds since 
 \begin{align}
 \sum_{|\alpha|=2}\abs{\frac{(\nabla f(\vec{x}))^{\alpha}}{\alpha!}}&=\frac{1}{2}\sum_{i=1}^d\left(\frac{\partial f}{\partial x_i}(\vec{x})\right)^2+\sum_{i<j}\abs{\frac{\partial f}{\partial x_i}(\vec{x})}\cdot\abs{\frac{\partial f}{\partial x_j}(\vec{x})}
 \\
 &=\frac{1}{2}\left(\sum_{i=1}^d\abs{\frac{\partial f}{\partial x_i}(\vec{x})}\right)^2.
 \end{align}

 To prove $f(\vec{x}-\eta\nabla f(\vec{x}))<f(\vec{x})$, by Taylor's theorem, it is equivalent to prove that
\begin{align}
f(\vec{x}-\eta\nabla f(\vec{x}))&=f(\vec{x})+\nabla f(\vec{x})\cdot(-\eta\nabla f(\vec{x}))+R_{\vec{x},1}(-\eta\nabla f(\vec{x}))
\\
&=f(\vec{x})-\eta\norm{\nabla f(\vec{x})}^2+R_{\vec{x},1}(-\eta\nabla f(\vec{x}))
\\
&<f(\vec{x}).
\end{align}
 From the above arguments, it suffices to show that 
 \begin{equation}\label{ineq}
 -\eta\norm{\nabla f(\vec{x})}^2+\frac{L\eta^2}{2}\left(\sum_{i=1}^d\abs{\frac{\partial f}{\partial x_i}(\vec{x})}\right)^2<0.
 \end{equation}
 Since $l^1$ norm is equivalent to $l^2$ norm on $\mathbb{R}^d$, i.e. $\sum v_i^2\le(\sum \abs{v_i})^2\le d\sum v_i^2$, we have that
 \[
 \frac{1}{d}\le\frac{\sum_{i=1}^d\left(\frac{\partial f}{\partial x_i}(\vec{x})\right)^2}{\left(\sum_{i=1}^d\abs{\frac{\partial f}{\partial x_i}(\vec{x})}\right)^2}.
 \]
 Then the inequality \ref{ineq} follows provided $\eta<\frac{2}{dL}$, and thus the proof completes.
\end{proof}

\begin{lemma}\label{fundlemma1}
 For each $\vec{x}_0\in\mathbb{R}^d-F$, there exists a neighborhood $N$ of $\vec{x}_0$, such that $T^r(N)\cap T^s(N)=\emptyset$ if $r\ne s$.
\end{lemma}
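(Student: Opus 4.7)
The plan is to exploit Lemma \ref{lemma:IF of GD} in a quantitative form: along the orbit of a non-fixed point, $f$ strictly decreases, and on any compact set avoiding $F$ the one-step decrease is bounded below by a positive constant. If a neighborhood $N$ of $\vec{x}_0$ ever met one of its iterates $T^n(N)$ for some $n\ne 0$, then by shrinking $N$ we would produce points at which $f$ takes nearly the same value after many gradient descent steps, which contradicts the uniform decrease. Note also that the conclusion $T^r(N)\cap T^s(N)=\emptyset$ for all $r\ne s$ is equivalent, via the bijectivity of $T$, to $N\cap T^n(N)=\emptyset$ for all $n\ne 0$, so it suffices to rule out this weaker form of self-intersection.

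For the setup, I would first observe that $F=\{\vec{x}:\nabla f(\vec{x})=\vec{0}\}$ is closed since $\nabla f$ is continuous, so $\vec{x}_0\notin F$ has a closed ball neighborhood $N\subset\mathbb{R}^d-F$. Since $N$ is compact and $\|\nabla f\|$ is continuous and strictly positive on $N$, set $\delta:=\min_{\vec{x}\in N}\|\nabla f(\vec{x})\|>0$. Next, re-examining the proof of Lemma \ref{lemma:IF of GD}, I would extract the quantitative bound
\[
f(\vec{x})-f(T(\vec{x}))\ \ge\ c\,\|\nabla f(\vec{x})\|^2,\qquad c\ :=\ \eta\Bigl(1-\tfrac{Ld\eta}{2}\Bigr)>0,
\]
valid for every $\vec{x}\in\mathbb{R}^d$, where $c>0$ because $\eta<2/(dL)$. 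Since $T$ is a homeomorphism fixing $F$ pointwise, any $\vec{y}\in N$ has its entire forward and backward orbit disjoint from $F$, so this bound propagates along orbits: for every $\vec{y}\in N$ and every $m\ge 1$, $f(\vec{y})-f(T^m(\vec{y}))\ge c\delta^2$.

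For the main argument I would argue by contradiction. Suppose no neighborhood of $\vec{x}_0$ has the required property. Choose balls $B_k\subset N$ around $\vec{x}_0$ with radius $1/k$. By hypothesis there exist integers $r_k\ne s_k$ and points $\vec{a}_k,\vec{b}_k\in B_k$ with $T^{r_k}(\vec{a}_k)=T^{s_k}(\vec{b}_k)$; setting $n_k:=s_k-r_k\ne 0$ and $\vec{y}_k:=\vec{b}_k$, we obtain $\vec{y}_k\in B_k$ and $T^{n_k}(\vec{y}_k)\in B_k$, both converging to $\vec{x}_0$. After passing to a subsequence we may assume all $n_k$ have the same sign; if $n_k<0$, we relabel by applying $T^{-n_k}$ to both endpoints. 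In either case the quantitative decrease from the previous paragraph yields $|f(\vec{y}_k)-f(T^{n_k}(\vec{y}_k))|\ge c\delta^2>0$ for every $k$. But continuity of $f$ together with $\vec{y}_k,T^{n_k}(\vec{y}_k)\to \vec{x}_0$ forces this quantity to tend to $0$, a contradiction.

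The only delicate point is ensuring that the compactness/positivity argument goes through uniformly; this is where the assumption $\vec{x}_0\notin F$ (used to produce the closed-ball neighborhood avoiding $F$) and Assumption \ref{assumption1} together with the step-size bound $\eta<2/(dL)$ (used to make $c>0$) both enter essentially. Everything else is a standard contradiction via continuity.
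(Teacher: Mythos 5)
Your proof is correct, but it takes a genuinely different route from the paper's. The paper argues constructively and purely qualitatively: using only the strict decrease $f(T^{-1}(\vec{x}_0))-f(\vec{x}_0)=\Delta>0$ at the single point $\vec{x}_0$ and continuity of $f$, it builds a neighborhood $N\subset T(V)\cap U$ on which $\sup_N f<\inf_{T^{-1}(N)}f$, extends this to $T^{-m}(N)\cap N=\emptyset$ for all $m\ge 1$ by monotonicity of $f$ along orbits, and then reduces the general case $r\ne s$ to this one by applying $T^{-r}$ (your opening ``equivalence via bijectivity'' remark is exactly the paper's step 3). You instead prove existence by contradiction, after extracting the quantitative descent inequality $f(\vec{x})-f(T(\vec{x}))\ge \eta\bigl(1-\tfrac{Ld\eta}{2}\bigr)\|\nabla f(\vec{x})\|^2$ from the proof of Lemma \ref{lemma:IF of GD} and using compactness of a closed ball avoiding the closed set $F$ to get a uniform one-step drop $c\delta^2$; telescoping (note the nonnegativity of every summand makes your remark about orbits avoiding $F$ unnecessary) then clashes with continuity of $f$ at $\vec{x}_0$. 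Your version buys a uniform $f$-value separation between $N$ and all of its nontrivial iterates, which is a slightly stronger and more robust conclusion; the paper's version buys portability, since it needs only the strict inequality $f(T(\vec{x}))<f(\vec{x})$ and is therefore reused verbatim for MWU (Lemma \ref{fundlemma1x}) and manifold gradient descent (Lemma \ref{fundlemma1xx}), where no quantitative descent constant is invoked. Both arguments are sound.
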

\begin{proof}
The proof is completed in three steps.
\\
1. For each $x_0\in \mathbb{R}^d-F$, there exists $N$ such that $T^{-1}(N)\cap N=\emptyset$.
\\
Since $f(T^{-1}(\vec{x}_0))-f(\vec{x}_0)=\Delta>0$ (clearly it is from that $f(T^{-1}(\vec{x}))>f(TT^{-1}(\vec{x}))$), $f$ is continuous, so there exists neighborhood $U$ of $\vec{x}_0$ such that $f(\vec{x})<f(\vec{x}_0)+\frac{\Delta}{3}$ for all $\vec{x}\in U$, and neighborhood $V$ of $T^{-1}(\vec{x}_0)$ such that $f(\vec{y})>f(T^{-1}(\vec{x}_0))-\frac{\Delta}{3}$ for all $\vec{y}\in V$. We have that $T(V)\cap U$ is open since $T^{-1}$ is homeomorphism. Let $N$ be a neighborhood of $\vec{x}$ in $T(V)\cap U$, then $N\subset U$ and $T^{-1}(N)\subset V$. We have 
\[
f(\vec{x})<f(\vec{x}_0)+\frac{\Delta}{3}<f(T^{-1}(\vec{x}_0))-\frac{\Delta}{3}<f(\vec{y}) \ \ \ \text{for all}\ \ \ \vec{x}\in N, \ \ \vec{y}\in V.
\]
So $T^{-1}(N)\cap N=\emptyset$.
\\
\\
2. $T^{-m}(N)\cap N=\emptyset$ for all $m\ge 1$.
\\
Suppose $\vec{x}\in N$ and $\vec{z}\in T^{-m}(N)$, then $\vec{z}=T^{-m}(\vec{u})$ for some $\vec{u}\in N$, and then 
\[
f(\vec{z})=f(T^{-m}(\vec{u}))>f(T^{-1}(\vec{u}))>f(\vec{x}),
\]
since $T^{-1}(\vec{u})\in T^{-1}(N)$. This shows that $\vec{z}\ne \vec{x}$.
\\
\\
3. $T^{-r}(N)\cap T^{-s}(N)=\emptyset$ if $r\ne s$.
\\
Assume $r>s$, let $\vec{y}\in T^{-r}(N)\cap T^{-s}(N)$. Then $T^{r}(\vec{y})\in N$ and $T^{-r+s}(T^{r}(\vec{y}))=T^{s}(\vec{y})\in N$. So $T^{r}(\vec{y})\in N\cap T^{-r+s}(N)$ which is impossible from 2.
\end{proof}

\begin{lemma}\label{fundlemma2}
$T:\mathbb{R}^d\rightarrow \mathbb{R}^d$. There is an $\epsilon>0$, such that for all $\vec{x}\in \mathbb{R}^d-F$, there exists $n$ such that $dist(T^n(\vec{x}),F)\ge \epsilon$.
\end{lemma}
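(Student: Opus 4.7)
The approach is a contradiction argument combining: Assumption \ref{assumption2} together with Theorem \ref{proper map} (so $T$ is a global diffeomorphism), the quadratic Taylor remainder provided by Assumption \ref{assumption1} via Theorem \ref{Taylor}, and the strict descent of $f$ along non-fixed orbits established in Lemma \ref{lemma:IF of GD}.

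Suppose the conclusion fails. Then for every $k\in\mathbb{N}$ there is $\vec{x}_k\in\mathbb{R}^d-F$ whose full orbit $\{T^n(\vec{x}_k):n\in\mathbb{Z}\}$ lies inside the $1/k$-neighborhood of $F$, and I may pick $\vec{p}_k\in F$ with $\|\vec{x}_k-\vec{p}_k\|=\mathrm{dist}(\vec{x}_k,F)<1/k$. The first step is to reduce to a compact regime: since the orbit is $T$-invariant I may replace $\vec{x}_k$ by any chosen iterate $T^{n_k}(\vec{x}_k)$ without loss of generality, and the properness of $T$ lets me select $n_k$ so that the chosen representative lies in a fixed compact subset of $\mathbb{R}^d$ (if every orbit ran off to infinity we would trivially get the escape conclusion, since $F$ in a bounded window is then far from the orbit). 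A Bolzano--Weierstrass selection then yields a subsequence with $\vec{p}_k\to\vec{p}^*\in F$ and hence $\vec{x}_k\to\vec{p}^*$.

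The core step is a local analysis at $\vec{p}^*$. Setting $\vec{v}=\vec{x}-\vec{p}^*$ and $H^*=\nabla^2 f(\vec{p}^*)$, Taylor's theorem and the uniform Hessian bound give
\[
T(\vec{p}^*+\vec{v})-\vec{p}^* = (I-\eta H^*)\vec{v} + R(\vec{v}), \qquad \|R(\vec{v})\|\le C\|\vec{v}\|^2,
\]
with $C$ depending on $L$ and $d$. In the hyperbolic case, where $H^*$ has some nonzero eigenvalue $\lambda^*$, the linearization $I-\eta H^*$ has eigenvalue $1-\eta\lambda^*\ne 1$, and a standard invariant-cone argument shows that the projection of $\vec{v}_k:=\vec{x}_k-\vec{p}^*$ onto the corresponding invariant direction grows geometrically, either under forward iteration (if $|1-\eta\lambda^*|>1$) or under backward iteration (if $|1-\eta\lambda^*|<1$). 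Thus the orbit of $\vec{x}_k$ must exit any preassigned ball around $\vec{p}^*$, contradicting the confinement inside $U_{1/k}(F)$ for $k$ sufficiently large.

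The main obstacle is the fully degenerate case $H^*=0$, where $DT(\vec{p}^*)=I$ and hyperbolicity is unavailable. To close this case I plan to use the strict descent estimate $f(\vec{x})-f(T(\vec{x}))\ge \tfrac{\eta}{2}\|\nabla f(\vec{x})\|^2$ (extracted from the proof of Lemma \ref{lemma:IF of GD}) together with the quadratic Taylor bound $|f(\vec{y})-f(\vec{p}^*)|=O(\|\vec{y}-\vec{p}^*\|^2)=O(1/k^2)$ along the orbit, valid because $\nabla f(\vec{p}^*)=0$. Telescoping the descent inequality gives $\sum_{n\in\mathbb{Z}}\|\nabla f(T^n(\vec{x}_k))\|^2 = O(1/k^2)$, forcing $\nabla f(T^n(\vec{x}_k))\to 0$ in both time directions. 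Combined with the Lipschitz bound $\|\nabla f(\vec{y})\|\le dL\,\mathrm{dist}(\vec{y},F)$ coming from Assumption \ref{assumption1} and the global injectivity of $T$ from Assumption \ref{assumption2}, this rules out the existence of a nontrivial bi-asymptotic orbit to $\vec{p}^*$ (such an orbit would lie in the intersection of the local ``stable" and ``unstable" sets of a proper diffeomorphism, which degenerates to $\{\vec{p}^*\}$), yielding the desired contradiction. Intertwining the quadratic descent control with the global diffeomorphism property in this degenerate subcase is where I expect the main technical effort to lie.
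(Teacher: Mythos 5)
There is a genuine gap, and in fact the paper does not attempt a self-contained argument here: it simply invokes Lemma 2.3 of \cite{Stebe} together with Theorem 4.19 of \cite{BhS}. The engine behind Stebe's lemma is global, not local: $f$ strictly decreases along every non-fixed orbit, the forward and backward limit sets lie in $F$, and $f$ assumes only finitely many values on $F$ (in the compact setting), so every non-fixed orbit must cross the region where $f$ is bounded away from $f(F)$ by half the minimal gap between those values; continuity of $f$ then converts this into a uniform distance $\epsilon$ from $F$. Your proposal replaces this with a linearization at a single accumulation fixed point $\vec{p}^*$, and that is where it breaks: the contradiction you extract is only that the orbit of $\vec{x}_k$ exits a ball $B(\vec{p}^*,r)$, which is perfectly compatible with the orbit remaining inside $U_{1/k}(F)$ --- it can simply migrate toward a different point or component of $F$. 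Escaping a neighborhood of one fixed point is strictly weaker than escaping an $\epsilon$-neighborhood of the whole set $F$, and the required uniformity of $\epsilon$ over all $\vec{x}$ is never addressed.

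Several supporting steps also fail as stated. The compactness reduction is unjustified: properness of $T$ controls preimages of compact sets, not where orbits live, and if $F$ is unbounded the parenthetical ``orbits running off to infinity trivially escape'' is false, since such an orbit can shadow an unbounded $F$. In the ``hyperbolic'' case you assume only \emph{some} nonzero eigenvalue of $H^*$; the remaining zero eigenvalues give center directions (eigenvalue $1$ of $DT(\vec{p}^*)$), and with a center present the invariant-cone argument does not force points off the stable manifold to leave the neighborhood. In the degenerate case, the estimate $|f(\vec{y})-f(\vec{p}^*)|=O(1/k^2)$ along the orbit needs the orbit to be $1/k$-close to $\vec{p}^*$ itself, whereas you only know it is $1/k$-close to $F$, and the $f$-values at the nearby fixed points need not equal $f(\vec{p}^*)$; hence the telescoped sum $\sum_n\|\nabla f(T^n(\vec{x}_k))\|^2$ equals $\frac{2}{\eta}\bigl(f(L_{\vec{x}_k})-f(l_{\vec{x}_k})\bigr)$, which is not $O(1/k^2)$ in general. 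If you want a direct proof, the viable route is the one underlying the cited lemma: use $f(L_{\vec{x}})>f(l_{\vec{x}})$ for non-fixed $\vec{x}$ plus finiteness (or at least discreteness) of $f(F)$ and compactness to produce the uniform $\epsilon$.
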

\begin{proof}
According to Lemma 2.3 \cite{Stebe}, the set of fixed points $F$ is asymptotically stable, and this lemma follows from the combined arguments with Theorem 4.19 of \cite{BhS}.
\end{proof}

\begin{lemma}\label{fundlemma3}
Let $T:\mathbb{R}^d\rightarrow \mathbb{R}^d$ be a homeomorphism, then $T$ has a fundamental set.
\end{lemma}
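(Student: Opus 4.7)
The plan is to construct the fundamental set $S$ greedily from a countable open cover, using Lemma \ref{fundlemma2} to guarantee each non-fixed orbit visits the cover and Lemma \ref{fundlemma1} to guarantee the cover has the disjoint-iterates property.

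First, I would use Lemma \ref{fundlemma2} to fix $\epsilon > 0$ so that every non-fixed orbit meets the closed ``collar'' $K = \{\vec{x} \in \mathbb{R}^d : \mathrm{dist}(\vec{x},F) \ge \epsilon\}$. Then for each $\vec{y} \in K$, Lemma \ref{fundlemma1} produces an open neighborhood $N_{\vec{y}} \subset \mathbb{R}^d - F$ with $T^{r}(N_{\vec{y}}) \cap T^{s}(N_{\vec{y}}) = \emptyset$ whenever $r \ne s$. Because $\mathbb{R}^d$ is second countable, hence Lindel\"of, I can extract a countable subcover $\{N_i\}_{i=1}^{\infty}$ of $K$.

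Next, I would define $S$ by the inductive prescription
\[
S_1 = N_1, \qquad S_{i+1} = N_{i+1} \setminus \bigcup_{n \in \mathbb{Z}} T^n\!\left(\bigcup_{j=1}^{i} S_j\right), \qquad S = \bigcup_{i=1}^{\infty} S_i.
\]
That is, $S_{i+1}$ keeps exactly those points of $N_{i+1}$ whose orbit has not already been represented. By construction $S \subset \bigcup_i N_i \subset \mathbb{R}^d - F$, matching the no-fixed-point clause of Definition \ref{def:fundset}.

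The remaining task is to check that each non-fixed orbit hits $S$ in exactly one point. For the \emph{at most once} direction, suppose $\vec{x} \in S_i$ and $T^n(\vec{x}) \in S_j$ with $n \ne 0$. If $i = j$, then $\vec{x}, T^n(\vec{x}) \in N_i$ contradicts the disjoint-iterates property of $N_i$ given by Lemma \ref{fundlemma1}. If $i < j$, then $T^n(\vec{x}) \in T^n(S_i) \subset \bigcup_{m \in \mathbb{Z}} T^m\!\big(\bigcup_{k < j} S_k\big)$, which was explicitly subtracted from $S_j$; the case $i > j$ is symmetric. For the \emph{at least once} direction, take $\vec{x} \in \mathbb{R}^d - F$; by Lemma \ref{fundlemma2} some $T^{n_0}(\vec{x})$ lies in $K$, and hence in some $N_{i_0}$. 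Either $T^{n_0}(\vec{x}) \in S_{i_0}$, in which case we are done, or $T^{n_0}(\vec{x}) \in T^m(S_j)$ for some $j < i_0$ and $m \in \mathbb{Z}$, in which case $T^{n_0 - m}(\vec{x}) \in S_j \subset S$.

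The main obstacle is essentially bookkeeping: ensuring that the greedy subtraction leaves every non-fixed orbit with a representative even though the cover $\{N_i\}$ only covers the collar $K$, not all of $\mathbb{R}^d - F$. Lemma \ref{fundlemma2} is precisely the tool that bridges this gap by forcing every orbit to eventually enter $K$, so the covering property of $\{N_i\}$ on $K$ suffices. No further quantitative control on the step size or on $f$ is needed beyond what Lemmas \ref{fundlemma1} and \ref{fundlemma2} already encode.
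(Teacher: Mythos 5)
Your proposal is correct and follows essentially the same route as the paper: cover the collar $D_\epsilon=\{\vec{x}:\mathrm{dist}(\vec{x},F)\ge\epsilon\}$ by countably many neighborhoods with the disjoint-iterates property from Lemma \ref{fundlemma1}, then greedily delete already-represented orbits; your subtraction of the orbits of the selected pieces $S_j$ rather than of the full cover sets $N_j$ yields the same set. If anything, your write-up is more careful than the paper's, since you explicitly verify both the ``at most once'' and ``at least once'' properties and make the reliance on Lemma \ref{fundlemma2} explicit.
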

\begin{proof}
Let $D_{\epsilon}=\{\vec{x}\in \mathbb{R}^d:dist(x,F)\ge\epsilon\}$, where $\epsilon\ge 0$ may be chosen so that $D_{\epsilon}\ne\emptyset$. Since $D_{\epsilon}\cap F=\emptyset$, from Lemma \ref{fundlemma1}, for each $\vec{x}\in D_{\epsilon}$, there exists a neighborhood $N_{\vec{x}}$ of $\vec{x}$ such that $T^n(N_{\vec{x}})$ are disjoint. Since $\mathbb{R}^d$ is a second countable space, $D_{\epsilon}$ has countable basis, i.e. it has countable open cover $N_1,...,N_r,...$. Then each sequence $T^n(\vec{x})$ for $\vec{x}\in \mathbb{R}^d-F$, $T^n(\vec{x})$ only meets $N_i$ once.
\\
Let 
\begin{align}
L_1&=N_1
\\
L_2&=N_2-\cup_{-\infty}^{\infty}T^n(N_1)
\\
L_3&=N_3-\cup_{-\infty}^{\infty}T^n(N_1)-\cup_{-\infty}^{\infty}T^n(N_2)
\\
&\vdots
\\
L_r&=N_r-\cup_{-\infty}^{\infty}T^n(N_1)-...-\cup_{-\infty}^{\infty}T^n(N_{r-1}),
\\
&\vdots
\end{align}
then $S=\cup_{i=1}^{\infty} L_i$ is a fundamental set.
\end{proof}

\begin{lemma}\label{fundlemma4}
Let $T$ be the gradient descent for $f:\mathbb{R}^d\rightarrow\mathbb{R}$, and $F$ be the set of fixed points of $T$, there exist d $T$-invariant functions of $T$ which are continuous and independent on an open dense subset of $\mathbb{R}^d-F$. 
\end{lemma}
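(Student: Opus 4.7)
The plan is to construct the $d$ invariant functions directly from the fundamental set produced in Lemma \ref{fundlemma3}. Given such a set $S \subset \mathbb{R}^d - F$, every non-fixed orbit meets $S$ in exactly one point, so one can define a ``section'' map $\sigma : \mathbb{R}^d - F \to S \subset \mathbb{R}^d$ by $\sigma(\vec{x}) = T^{n(\vec{x})}(\vec{x})$, where $n(\vec{x})$ is the unique integer with $T^{n(\vec{x})}(\vec{x}) \in S$ guaranteed by Definition \ref{def:fundset}. Since $\sigma \circ T = \sigma$ by construction, its $d$ component functions $\varphi_1,\dots,\varphi_d$ are automatic candidates for the claimed invariant functions.

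Next, I would isolate the open dense subset on which $\sigma$ is continuous with invertible Jacobian. Write $S = \bigcup_i L_i$ as in the proof of Lemma \ref{fundlemma3} and set
\[
G \;=\; \bigcup_{i \ge 1} \bigcup_{n \in \mathbb{Z}} T^n\bigl(\mathrm{int}(L_i)\bigr).
\]
For any $\vec{x} \in T^n(\mathrm{int}(L_i))$, applying Lemma \ref{fundlemma1} to the point $T^{-n}(\vec{x}) \in \mathrm{int}(L_i) \subset N_i$ produces a neighborhood $U$ of $T^{-n}(\vec{x})$ whose $T$-iterates are pairwise disjoint. It follows that on $T^n(U) \cap T^n(\mathrm{int}(L_i))$ the integer $n(\cdot)$ is locally constant and equal to $-n$, so $\sigma$ coincides there with the diffeomorphism $T^{-n}$. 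Hence on $G$, $\sigma$ is a local diffeomorphism, which yields simultaneously the continuity of each $\varphi_i$ and the independence (invertibility of the Jacobian of $(\varphi_1,\dots,\varphi_d)$) required by the statement.

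Finally, one must establish that $G$ is open and dense in $\mathbb{R}^d - F$. Openness is immediate since $T$ is a homeomorphism and $G$ is a union of images of open sets. For density, Lemma \ref{fundlemma2} guarantees that every orbit meets some $D_\epsilon$ and therefore some $N_i$; by choosing the countable cover $\{N_i\}$ to be a basis of $D_\epsilon$ consisting of open balls around a dense countable subset, one ensures that each $L_i = N_i \setminus \bigcup_{j<i}\bigcup_n T^n(N_j)$ has dense interior inside itself, since the sets being removed are countable unions of pairwise disjoint open sets whose complement in $N_i$ is therefore a Baire-generic subset. Iterating by $T$ then spreads this density along every orbit.

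The main obstacle is precisely this last density step: a priori an $L_i$ could have empty interior, in which case $G$ would miss whole portions of the phase space. Controlling this requires a careful choice of the basis $\{N_i\}$ together with a Baire category argument on $\mathbb{R}^d - F$ (which is an open subset of a complete metric space, hence a Baire space) to rule out the pathological situation that removing all earlier iterates exhausts every $N_i$. Once this is in place, the remaining claims of continuity and independence reduce to the local picture provided by Lemma \ref{fundlemma1} and the fact that each $T^{-n}$ is a diffeomorphism.
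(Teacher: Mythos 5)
Your construction is essentially the paper's own proof: both define the section map onto the fundamental set $S$, observe that near any point of the good set it coincides with a fixed iterate $T^m$ (hence is a local diffeomorphism whose $d$ components give the continuous, independent, $T$-invariant functions), and both ultimately rest the density claim on a Baire-category fact about removing countably many nowhere dense sets. The only difference is bookkeeping: the paper deletes $B=\bigcup_n T^n(\partial S)$ and asserts density of the complement, whereas you work with $\bigcup_i\bigcup_n T^n(\mathrm{int}(L_i))$ and flag that density step explicitly --- the two descriptions of the good set agree up to the same orbit of boundaries, so your proposal is correct and follows the paper's route.
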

\begin{proof}
Let $S\subset\mathbb{R}^d$ be a fundamental set for $T$. Let $\partial S$ be the boundary of $S$ and let $B=\cup_{-\infty}^{\infty}T^n(\partial S)$. Then $\mathbb{R}^d-F-B$ is dense in $\mathbb{R}^d-F$.
Denote $M'=\mathbb{R}^d-F-B$ and recall that
\[
[\vec{x}]=\{T^n(\vec{x}):n\in\mathbb{Z}\}
\]
and 
\[
M'/T:=\{[\vec{x}]:\vec{x}\in M'\}.
\]
Let $\varphi(\vec{x})$ be the element of $\{T^n(\vec{x})\}$ in $S$. We next show that it is continuous. 
\\
If $\vec{x}\in M'$, $\varphi(\vec{x})$ is the unique intersection of $\{T^n(\vec{x})\}$ with $S$. Hence there is an integer $m$ such that $T^m(\vec{x})\in S$. 
Let $U$ be a neighborhood of $T^m(\vec{x})$ in $S$. Since $T^m$ is continuous, $V=(T^m)^{-1}(U)=T^{-m}(U)$ is a neighborhood of $\vec{x}$. If $\vec{y}\in V$, $T^m(\vec{y})\in S$ so that $\varphi(\vec{y})=T^m(\vec{y})$ for all $\vec{y}\in V$. Hence $\varphi$ is continuous in a neighborhood of $\vec{x}\in M'$, $M'$ is open and $\varphi=T^m$ for some $m$ in a neighborhood of $\vec{x}\in \mathbb{R}^d$.
\\
We set 
\[
\varphi(\vec{x})=(\varphi_1(\vec{x}),...,\varphi_d(\vec{x}))
\] 
so that the $\varphi_i(\vec{x})$ are the components of $\varphi(\vec{x})$, it follows that the $\varphi_i(\vec{x})$ are continuous and independent on $M'$, since $\varphi(\vec{x})$ is a local homeomorphism on $M'$. Since $\varphi(T(\vec{x}))=\varphi(\vec{x})$, $\varphi_i(T(\vec{x}))=\varphi_i(\vec{x})$, meaning that $\varphi_i(\vec{x})$ are $T$-invariant, and then the proof is complete.
\end{proof}

\noindent
\textbf{Proof of Theorem \ref{invariant functions:GD}}
\begin{proof}
With the above lemmas, we can finish the proof by showing that $T$ is a local diffeomorphism when the stepsize $\eta$ is chosen small enough, which boils down to showing that the Jacobian matrix of $T(\vec{x})=\vec{x}-\eta\nabla f(\vec{x})$ is non-singular when $\eta$ approaching $0$, i.e. $\det DT(\vec{x})\ne 0$ as $\eta\rightarrow 0$. Specifically, 
\[
DT(\vec{x})=I-\eta\nabla^2f(\vec{x}).
\]
Since by assumptions, all the entries of $\nabla^2f(\vec{x})$ are uniformly bounded, the determinant $\det DT(\vec{x})$ is continuous with respect to its coefficients, and $\lim_{\eta\rightarrow 0} DT(\vec{x})=I$, so $\det DT(\vec{x})\rightarrow 1$ as $\eta\rightarrow 0$, which means $DT(\vec{x})$ is invertible. 
By the assumptions, $T$ is a proper map. Since the Euclidean space $\mathbb{R}^d$ is simply-connected (fundamental group is trivial since $\mathbb{R}^d$ is homotopic to a point), by Theorem \ref{proper map} (Theorem 2 of \cite{Ho1975}), the fact that $T$ is a local diffeomorphism and proper map on simply-connected space $\mathbb{R}^d$ implies that $T$ is a global homeomorphism, i.e. $T$ is invertible on $\mathbb{R}^d$ so that $T^{-1}$ is well defined. Then from Lemma \ref{fundlemma4}, there exists $d$ continuous and independent $T$-invariant functions on an open dense subset of $\mathbb{R}^d-F$ where $F$ is the set of fixed points of $T$.
\end{proof}

\begin{lemma}\label{IF:sum1}
The set of cluster points of $\{T^n(\vec{x})\}_{n\in\mathbb{Z}}$ is the union of $L_{\vec{x}}$ and $l_{\vec{x}}$. The value of $f$ is constant on each of $L_{\vec{x}}$ and $l_{\vec{x}}$. If $f(L_{\vec{x}})$ denotes the value of $f$ on $l_{\vec{x}}$ and $f(L_{\vec{x}})$ denotes the value of $f$ on $L_{\vec{x}}$ we have $f(L_{\vec{x}})>f(l_{\vec{x}})$ whenever $\vec{x}$ is not a fixed point of $T$ in $\mathbb{R}^d$.
\end{lemma}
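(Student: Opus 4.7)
The plan is to exploit the strict descent of $f$ along the orbit (Lemma \ref{lemma:IF of GD}) together with the continuity of $f$ to pin down the value of $f$ on each one-sided limit set, and then read off the strict inequality $f(L_{\vec{x}}) > f(l_{\vec{x}})$ from the monotonicity.

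First I would check that $a_n := f(T^n(\vec{x}))$ is strictly decreasing in $n \in \mathbb{Z}$ whenever $\vec{x} \notin F$. Since $T$ is a homeomorphism (by Theorem \ref{invariant functions:GD}), no iterate $T^n(\vec{x})$ can be a fixed point either: if $T(T^n(\vec{x})) = T^n(\vec{x})$ then applying $T^{-n}$ to both sides would give $T(\vec{x}) = \vec{x}$. Hence Lemma \ref{lemma:IF of GD} applies at every iterate and yields $a_{n+1} < a_n$ for all $n \in \mathbb{Z}$. Consequently the two monotone limits
\[
f_- := \lim_{n \to -\infty} a_n \in (a_0, +\infty], \qquad f_+ := \lim_{n \to +\infty} a_n \in [-\infty, a_0)
\]
are well-defined and satisfy the strict inequality $f_+ < f_-$.

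Next I would identify $L_{\vec{x}}$ and $l_{\vec{x}}$ with the $\alpha$- and $\omega$-limit sets of the orbit, respectively. Any cluster point of $\{T^n(\vec{x})\}_{n\in\mathbb{Z}}$ is the limit of a subsequence $T^{n_k}(\vec{x})$ indexed by an infinite subset of $\mathbb{Z}$, and every infinite subset of $\mathbb{Z}$ contains a further subsequence tending to $+\infty$ or to $-\infty$. Consequently the total cluster set equals $L_{\vec{x}} \cup l_{\vec{x}}$, which gives the first assertion.

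Finally I would invoke the continuity of $f$. For any $\vec{y} \in L_{\vec{x}}$ realized as $T^{n_k}(\vec{x}) \to \vec{y}$ with $n_k \to -\infty$, continuity gives $f(\vec{y}) = \lim_k a_{n_k} = f_-$, so $f \equiv f_-$ on $L_{\vec{x}}$; symmetrically $f \equiv f_+$ on $l_{\vec{x}}$. In particular $f_-$ and $f_+$ are automatically finite whenever the corresponding limit set is non-empty, and the strict inequality $f_+ < f_-$ from the first step transfers to $f(L_{\vec{x}}) > f(l_{\vec{x}})$ (the statement being vacuous if one of the two sets is empty). The only place requiring genuine care is the first step: the proof crucially needs the \emph{entire} two-sided orbit to avoid $F$, which is exactly where the global homeomorphism property of $T$, established in Theorem \ref{invariant functions:GD} under Assumption \ref{assumption2} and the small step-size condition, is indispensable.
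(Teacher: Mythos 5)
Your proof is correct and follows essentially the same route as the paper's: strict monotone descent of $f(T^n(\vec{x}))$ along the two-sided orbit (justified by $T$ being a global homeomorphism, so no iterate of a non-fixed point is fixed) combined with continuity of $f$ forces $f$ to equal the monotone limits $f_-$ and $f_+$ on $L_{\vec{x}}$ and $l_{\vec{x}}$ respectively. If anything, your write-up is more complete, since the paper's proof only verifies constancy on each limit set and leaves the cluster-set decomposition and the strict inequality $f(L_{\vec{x}})>f(l_{\vec{x}})$ implicit in the monotonicity.
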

\begin{proof}
Let $\vec{a},\vec{b}\in l_{\vec{x}}$, then by definition of cluster point, we have two subsequences $\{n_i\}_{\mathbb{Z}_+}$ and $\{n_j\}_{\mathbb{Z}_+}$ of $n\ge 0$, such that
\[
\lim_{i\rightarrow\infty}\norm{T^{n_i}(\vec{x})-\vec{a}}=0
\]
and
\[
\lim_{j\rightarrow\infty}\norm{T^{n_j}(\vec{x})-\vec{b}}=0.
\]
Since $f$ is continuous, we have that 
\[
\lim_{i\rightarrow\infty}f(T^{n_i}(\vec{x}))=f(\vec{a})
\]
and 
\[
\lim_{j\rightarrow\infty}f(T^{n_j}(\vec{x}))=f(\vec{b}).
\]
By the fact that 
\begin{align}
\lim_{i\rightarrow\infty}f(T^{n_i}(\vec{x}))&=\lim_{i\rightarrow\infty}f(T^{n_i}(\vec{x}))
\\
&=\text{local minimum with initial condition}\ \ \vec{x},
\end{align} 
we conclude $f(\vec{a})=f(\vec{b})$.
\\
The other case, let $\{n\}=\{n=-1,-2...\}$ be the sequence of negative integers and $\vec{c},\vec{d}\in L_{\vec{x}}$, there exist two subsequences negative integers $\{n_i\}_{i\in\mathbb{Z}_+}$ and $\{n_j\}_{j\in\mathbb{Z}_+}$ of $\{n\}$, such that
\[
\lim_{i\rightarrow\infty}\norm{T^{n_i}(\vec{x})-\vec{c}}=0
\]
and
\[
\lim_{j\rightarrow\infty}\norm{T^{n_j}(\vec{x})-\vec{d}}=0,
\]
and by the continuity of $f$, we have
\[
\lim_{i\rightarrow\infty}f(T^{n_i}(\vec{x}))=f(\vec{c})
\]
and 
\[
\lim_{j\rightarrow\infty}f(T^{n_j}(\vec{x}))=f(\vec{d}).
\]
Since 
\begin{align}
\lim_{i\rightarrow\infty}f(T^{n_i}(\vec{x}))&=\lim_{j\rightarrow\infty}f(T^{n_j}(\vec{x}))
\\
&=\text{local maximum with initial condition}\ \ \vec{x},
\end{align} 
we have $f(\vec{c})=f(\vec{d})$.
\end{proof}

\begin{lemma}\label{B2x}
Let $\vec{x}_0$ be an element of $\mathbb{R}^d$. Either there is a neighborhood $N$ of $\vec{x}_0$ such that $f(L_{\vec{x}})=f(L_{\vec{x}_0})$ for all $\vec{x}\in N$ or in every neighborhood of $\vec{x}_0$ there is an $\vec{x}$ such that $f(L_{\vec{x}})>f(L_{\vec{x}_0})$. 
\end{lemma}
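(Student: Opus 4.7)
The plan is to analyze the alpha-limit set $L_{\vec{x}_0}$. Set $c := f(L_{\vec{x}_0})$. By Lemma~\ref{IF:sum1}, $L_{\vec{x}_0}$ is a compact $T$-invariant set of critical points of $f$ lying on the level $\{f = c\}$, and the sequence $f(T^{-n}(\vec{x}_0))$ strictly increases to $c$, so the backward iterates $T^{-n}(\vec{x}_0)$ accumulate on $L_{\vec{x}_0}$. I will split the argument according to whether every point of $L_{\vec{x}_0}$ is a local maximum of $f$ or not.

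In the first case (all local maxima), the differential $DT(\vec{y}) = I - \eta \nabla^2 f(\vec{y})$ at each non-degenerate maximum $\vec{y}$ has spectrum of modulus strictly greater than $1$, so $T^{-1}$ is a local contraction at every $\vec{y} \in L_{\vec{x}_0}$; compactness of $L_{\vec{x}_0}$ then yields an open basin of attraction $B \supset L_{\vec{x}_0}$ for $T^{-1}$ on which every backward orbit converges to $L_{\vec{x}_0}$. Picking $n_0$ with $T^{-n_0}(\vec{x}_0) \in B$ and invoking continuity of the homeomorphism $T^{-n_0}$ produces a neighborhood $N$ of $\vec{x}_0$ with $T^{-n_0}(N) \subset B$, so every $\vec{x} \in N$ satisfies $L_{\vec{x}} = L_{T^{-n_0}(\vec{x})} \subset L_{\vec{x}_0}$ and thus $f(L_{\vec{x}}) = c$. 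This establishes the first alternative.

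In the second case there exists $\vec{y}_* \in L_{\vec{x}_0}$ which is not a local maximum of $f$; consequently every neighborhood of $\vec{y}_*$ contains a point $\vec{x}'$ with $f(\vec{x}') > c$, and Lemma~\ref{lemma:IF of GD} together with the strict monotonicity of $f$ along backward orbits gives $f(L_{\vec{x}'}) \ge f(\vec{x}') > c$. Given any neighborhood $N$ of $\vec{x}_0$, I would pass to a subsequence along which $T^{-n_0}(\vec{x}_0) \to \vec{y}_*$ and exploit that, near a saddle $\vec{y}_*$, $T^{-1}$ expands along its unstable directions---precisely the directions in which $f$ has a local minimum at $\vec{y}_*$ and so locally exceeds $c$---to conclude that the open set $T^{-n_0}(N)$ is progressively stretched across $\{f > c\}$ and eventually intersects it near $\vec{y}_*$ for large $n_0$. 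Taking $\vec{x}' \in T^{-n_0}(N) \cap \{f > c\}$ and setting $\vec{x} := T^{n_0}(\vec{x}') \in N$, orbit invariance yields $f(L_{\vec{x}}) = f(L_{\vec{x}'}) > c$, establishing the second alternative.

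The hard part will be making the stretching argument in the second case precise when $\vec{y}_*$ is a degenerate (non-hyperbolic) critical point, where the stable/unstable manifold theorem does not apply directly. I would circumvent this by arguing topologically rather than via linearization: the non-maximum condition alone guarantees that $\{f > c\}$ accumulates at $\vec{y}_*$, and combining this with $T^{-n_0}(\vec{x}_0) \to \vec{y}_*$ and the openness of $T^{-n_0}(N)$ should suffice to verify $T^{-n_0}(N) \cap \{f > c\} \ne \emptyset$ for sufficiently large $n_0$. A parallel degeneracy concern in the first case---local maxima that are not obviously attractors for $T^{-1}$---can be resolved by a trapping argument combining $f \le c$ locally near $L_{\vec{x}_0}$ with the monotonicity of $f$ along backward orbits, ensuring that backward orbits started close enough to $L_{\vec{x}_0}$ cannot escape and hence accumulate inside $L_{\vec{x}_0}$.
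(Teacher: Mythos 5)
Your proof takes a genuinely different route from the paper's, but the route does not work: the implication underlying your second case is false. You claim that if some $\vec{y}_*\in L_{\vec{x}_0}$ is not a local maximum of $f$, then every neighborhood $N$ of $\vec{x}_0$ contains an $\vec{x}$ with $f(L_{\vec{x}})>c$, and you propose to get this ``topologically'' from the facts that $\{f>c\}$ accumulates at $\vec{y}_*$, that $T^{-n_0}(\vec{x}_0)\to\vec{y}_*$, and that $T^{-n_0}(N)$ is open. Two open sets that both accumulate at the same point need not intersect, and here they genuinely may not. Concretely, take $d=1$ and $f(x)=x^3$ near the origin (truncated away from $0$ so that Assumptions \ref{assumption1}--\ref{assumption2} hold), with $\vec{x}_0<0$ small. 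Then $T(x)=x-3\eta x^2$, the backward orbit of every $x<0$ near $0$ increases monotonically to the degenerate fixed point $0$, so $L_{\vec{x}}=\{0\}$ and $f(L_{\vec{x}})=0=c$ for \emph{all} $x$ in a small neighborhood of $\vec{x}_0$; yet $0$ is not a local maximum and $\{f>c\}=(0,\infty)$ accumulates at it. Here the first alternative of the lemma holds while your Case 2 hypothesis is satisfied, so a completed version of your argument would prove a false statement. (The hyperbolic-saddle sub-case you gesture at really does require the inclination/$\lambda$-lemma, not just openness; and your Case 1 fix for degenerate maxima is also incomplete, since $f\le c$ near $L_{\vec{x}_0}$ plus monotonicity of $f$ along backward orbits does not by itself trap a discrete backward orbit in a neighborhood.)

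The paper's proof does not case-split on the structure of $L_{\vec{x}_0}$ at all and needs none of this dynamical machinery. It proves the contrapositive of the dichotomy directly: assume there is a neighborhood $N_1$ on which $f(L_{\vec{x}})\le f(L_{\vec{x}_0})$. Since $f(L_{\vec{x}})=\sup_m f(T^m(\vec{x}))$ is a supremum of continuous functions, each superlevel set $S_\xi=\{\vec{x}:f(L_{\vec{x}})\ge f(L_{\vec{x}_0})-\xi\}$ is open (lower semicontinuity); intersecting with $N_1$ pins $f(L_{\vec{x}})$ into $[f(L_{\vec{x}_0})-\xi,\,f(L_{\vec{x}_0})]$ on an open neighborhood of $\vec{x}_0$, and since $f(L_{\vec{x}})$ takes only finitely many values this forces $f(L_{\vec{x}})=f(L_{\vec{x}_0})$ there for $\xi$ small. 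If you want to salvage your approach, the semicontinuity observation is the ingredient to adopt; the classification of $L_{\vec{x}_0}$ into maxima versus non-maxima simply does not determine which alternative of the dichotomy holds.
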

\begin{proof}
Suppose there is a neighborhood $N_1$ of $\vec{x}_0$ in $\mathbb{R}^d$ such that $f(L_{\vec{x}_0})\ge f(L_{\vec{x}})$ for all $x\in N_1$. Let $\xi$ be a positive number. Let $S_{\xi}=\{\vec{x}:f(L_{\vec{x}})\ge f(L_{\vec{x}_0})-\xi\}$. We show that $S_{\xi}$ is open. If $\vec{x}$ is an element of $S_{\xi}$, there is an $m$ such that $f(T^m(\vec{x}))>f(L_{\vec{x}_0})-\xi$. Since $T^m$ is continuous, there is a neighborhood $N_{\vec{x}}$ of $\vec{x}$ such that $f(T^m(\vec{y}))>f(L_{\vec{x}_0})-\xi$ for all $\vec{y}$ in $N_{\vec{x}}$. But $f(L_{\vec{y}})\ge f(T^m(\vec{y}))$ for all $\vec{y}\in\mathbb{R}^d$ so that $f(L_{\vec{y}})\in S_{\xi}$ for all $y\in N_{\vec{x}}$. Hence $S_{\xi}$ is open. Let $N(\xi)=S_{\xi}\cap N_{\vec{x}_0}$. Since $\vec{x}_0$ is an element of $S_{\xi}$ for all positive $\xi$, $N(\xi)$ is not empty for $\xi>0$. Since $N(\xi)$ is contained in $N_{\vec{x}_0}$ and $S_{\xi}$, $f(L_{\vec{x}_0})\ge f(L_{\vec{x}})\ge f(L_{\vec{x}_0})-\xi$ for all $\vec{x}$ in $N(\xi)$. Since the points of $L_{\vec{x}}$ are in $F$, the set of fixed points of $T$, $f(L_{\vec{x}})$ can assume only finitely many values. Hence for $\xi$ sufficiently small
\[
f(L_{\vec{x}_0})\ge f(L_{\vec{x}})\ge f(L_{\vec{x}_0})-\xi
\]
implies that $f(L_{\vec{x}})=f(L_{\vec{x}_0})$, and so for some $\xi$, $\vec{x}\in N(\xi)$ implies that $f(L_{\vec{x}})=f(L_{\vec{x}_0})$.
\end{proof}

\begin{lemma}\label{IF:sum3}
Let $\vec{x}_0$ be an element of $\mathbb{R}^d$. Either there is a neighborhood $N_{\vec{x}_0}$ of $\vec{x}_0$ in $\mathbb{R}^d$ such that $f(L_{\vec{x}_0})=f(L_{\vec{x}})$ for all $\vec{x}$ in $N_{\vec{x}_0}$ or every neighborhood $N$ of $\vec{x}_0$ contains an open subset $V_{N}$ such that $f(L_{\vec{y}})=f(L_{\vec{z}})$ for all $\vec{y}$ and $\vec{z}$ in $V_{N}$.
\end{lemma}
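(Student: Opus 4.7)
The plan is to view this lemma as an iteration of Lemma \ref{B2x}, driven by the finiteness of the values assumed by $f(L_{\cdot})$. First I would apply Lemma \ref{B2x} at $\vec{x}_0$ itself. If its first alternative holds, then there is a neighborhood of $\vec{x}_0$ on which $f(L_{\vec{x}})=f(L_{\vec{x}_0})$, which is exactly the first alternative of the current lemma. So the content lies in the remaining case: the second alternative of Lemma \ref{B2x} holds at $\vec{x}_0$, meaning every neighborhood of $\vec{x}_0$ contains a point with strictly larger $f(L_{\cdot})$ value.

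Fix an arbitrary neighborhood $N$ of $\vec{x}_0$. I would construct inductively a sequence $\vec{x}_0, \vec{x}_1, \vec{x}_2, \ldots$ in $N$ with strictly increasing $f(L_{\vec{x}_k})$ as follows. At step $k$, pick an open neighborhood $U_k \subset N$ of $\vec{x}_k$ (which is possible since $N$ is open and $\vec{x}_k \in N$) and apply Lemma \ref{B2x} at $\vec{x}_k$. If its first alternative triggers, supplying a neighborhood $W_k$ of $\vec{x}_k$ on which $f(L_{\cdot}) \equiv f(L_{\vec{x}_k})$, then $V_N := W_k \cap U_k$ is a nonempty (contains $\vec{x}_k$) open subset of $N$ on which $f(L_{\cdot})$ is constant, so the second alternative of the lemma is satisfied and we stop. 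Otherwise its second alternative provides $\vec{x}_{k+1} \in U_k \subset N$ with $f(L_{\vec{x}_{k+1}}) > f(L_{\vec{x}_k})$, and we iterate.

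To close the argument, I would invoke the same finiteness property that Lemma \ref{B2x} itself relies on: in a bounded region of the space, $f(L_{\vec{x}})$ takes only finitely many values. Taking $N$ with compact closure, the sequence $f(L_{\vec{x}_k})$ is bounded above by $\sup_{\bar{N}} f$, and so a strictly increasing sequence of values from a finite set can only have finitely many terms. Hence at some finite step $k$ the first alternative of Lemma \ref{B2x} must trigger, producing the desired $V_N$.

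The main subtleties I would watch for are, first, maintaining the invariant $\vec{x}_k \in N$ throughout the induction (handled cleanly by the nested choice $U_k \subset N$), and second, making the termination argument rigorous by confining the iteration to a set on which the finiteness claim for $f(L_{\cdot})$ really applies. Everything else is essentially a bookkeeping application of the dichotomy in Lemma \ref{B2x}.
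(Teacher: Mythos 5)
Your proof is correct and rests on exactly the same two ingredients as the paper's: the dichotomy of Lemma \ref{B2x} and the finiteness of the range of $\vec{x}\mapsto f(L_{\vec{x}})$. The only difference is mechanical. The paper argues in one shot: it lets $K$ be the least upper bound of $f(L_{\vec{x}})$ over $N$, notes that $K$ is attained at some $\vec{y}\in N$ because the range is finite, observes that the second alternative of Lemma \ref{B2x} cannot hold at $\vec{y}$ (since $N$ is itself a neighborhood of $\vec{y}$ containing no point of strictly larger value), and thus obtains a neighborhood $U$ of $\vec{y}$ on which $f(L_{\cdot})$ is constant, setting $V_N=N\cap U$. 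You reach the same locally maximizing point by iterating the dichotomy, using finiteness only to force termination; your nested choice $U_k\subset N$ keeps all iterates inside $N$, playing the role of the paper's remark that $N$ is a neighborhood of the maximizer. Your added care about confining the iteration to a region where the finiteness of the range of $f(L_{\cdot})$ actually holds is sensible --- the paper asserts this finiteness without restriction --- but it is the identical hypothesis both arguments lean on, so neither version is stronger. Both proofs are valid; the paper's is slightly shorter, yours is a constructive unwinding of the same extremal argument.
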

\begin{proof}
Suppose $\vec{x}_0$ is an element of $\mathbb{R}^d$ and there is no neighborhood $U$ of $\vec{x}_0$ in $\mathbb{R}^d$ such that $f(L_{\vec{x}})=f(L_{\vec{x}_0})$ for all $\vec{x}$ in $U$. Let $N$ be a neighborhood of $\vec{x}_0$. According to the lemma \ref{B2x}, there is an element $\vec{x}$ of $N$ such that $f(L_{\vec{x}})>f(L_{\vec{x}_0})$. Let $K$ be the least upper bound of $f(L_{\vec{x}})$ for $\vec{x}$ in $N$. Since the range of $f(L_{\vec{x}})$ is finite, there is a point $\vec{y}$ of $N$ such that $f(L_{\vec{y}})=K$. Thus $f(L_{\vec{y}})\ge f(L_{\vec{x}})$ for all $\vec{x}$ in $N$, and $N$ is a neighborhood of $\vec{y}$. By lemma \ref{B2x}, there is a neighborhood $U$ of $\vec{y}$ such that $f(L_{\vec{y}})=f(L_{\vec{x}})$ for all $\vec{x}\in U$. Let $V_{N}=N\cap U$.
\end{proof}

\begin{lemma}\label{IF:sum4}
Let $\vec{x}_0$ be an element of $\mathbb{R}^d$. Either there is a neighborhood $N_{\vec{x}_0}$ of $\vec{x}_0$ in $\mathbb{R}^d$ such that $f(l_{\vec{x}_0})=f(l_{\vec{x}})$ for all $\vec{x}$ in $N_{\vec{x}_0}$, or every neighborhood $N$ of $\vec{x}_0$ contains an open subset $U_{N}$ such that $f(l_{\vec{y}})=f(l_{\vec{z}})$ for all $\vec{y}$ and $\vec{z}$ in $U_N$.
\end{lemma}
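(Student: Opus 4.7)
The plan is to mirror the proof of Lemma \ref{IF:sum3} essentially verbatim, but replacing the $\alpha$-limit set $L_{\vec{x}}$ (backward cluster points, where $f$ achieves its local maxima along the orbit) with the $\omega$-limit set $l_{\vec{x}}$ (forward cluster points, where $f$ achieves its local minima). The main sign flip is that since $f$ strictly decreases along the orbit of $T$ (Lemma \ref{lemma:IF of GD}), instead of looking for points in a neighborhood with \emph{larger} $f(L_{\vec{x}})$ we look for points with \emph{smaller} $f(l_{\vec{x}})$, and instead of taking the \emph{supremum} over a neighborhood we take the \emph{infimum}.

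The first step is to establish the analog of Lemma \ref{B2x}: either there is a neighborhood $N_{\vec{x}_0}$ on which $f(l_{\vec{x}}) = f(l_{\vec{x}_0})$, or in every neighborhood of $\vec{x}_0$ there is a point $\vec{x}$ with $f(l_{\vec{x}}) < f(l_{\vec{x}_0})$. To prove this, assume a neighborhood $N_1$ exists with $f(l_{\vec{x}_0}) \le f(l_{\vec{x}})$ throughout, and for each $\xi > 0$ define
\[
S_\xi = \{\vec{x} : f(l_{\vec{x}}) \le f(l_{\vec{x}_0}) + \xi\}.
\]
I claim $S_\xi$ is open: if $\vec{x} \in S_\xi$, then because $f(T^n(\vec{x}))$ decreases and converges to $f(l_{\vec{x}})$, there exists $m$ with $f(T^m(\vec{x})) < f(l_{\vec{x}_0}) + \xi$; continuity of $f \circ T^m$ yields a neighborhood $N_{\vec{x}}$ on which the same inequality holds, and since $f(l_{\vec{y}}) \le f(T^m(\vec{y}))$ for every $\vec{y}$, the neighborhood $N_{\vec{x}}$ is contained in $S_\xi$. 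Then $N(\xi) = S_\xi \cap N_1$ is open and nonempty, so $f(l_{\vec{x}_0}) \le f(l_{\vec{x}}) \le f(l_{\vec{x}_0}) + \xi$ for all $\vec{x} \in N(\xi)$. Because $l_{\vec{x}} \subset F$ and $f$ takes only finitely many values on $F$ under the operative assumption, choosing $\xi$ smaller than the minimal gap between distinct critical values forces $f(l_{\vec{x}}) = f(l_{\vec{x}_0})$ on $N(\xi)$.

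With this intermediate dichotomy in hand, the second step reproduces the argument of Lemma \ref{IF:sum3} with reversed inequalities: if no constant-$f(l)$ neighborhood of $\vec{x}_0$ exists, then given any neighborhood $N$ of $\vec{x}_0$ the set $\{f(l_{\vec{x}}) : \vec{x} \in N\}$ is a finite set of reals, so it attains its infimum $K$ at some $\vec{y} \in N$; then $f(l_{\vec{y}}) \le f(l_{\vec{x}})$ for all $\vec{x} \in N$, and applying the intermediate lemma at $\vec{y}$ (with $N$ as its ambient neighborhood) produces a neighborhood $U$ of $\vec{y}$ where $f(l_{\vec{x}}) = f(l_{\vec{y}})$. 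Setting $U_N = N \cap U$ gives the desired open subset on which $f \circ l$ is constant. The main obstacle I anticipate is the finiteness-of-critical-values bookkeeping in the openness argument — concretely, one must ensure that the infimum $K$ is \emph{achieved} rather than merely approached; this is where the finiteness of $\{f(\vec{z}) : \vec{z} \in F\}$ (inherited from the operative hypothesis, e.g.\ Corollary \ref{IF:construction discrete crit}) is essential, whereas the rest of the argument is a symmetric mirror of the already-established proof.
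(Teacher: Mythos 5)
Your proposal is correct and follows essentially the same route as the paper's proof: establish the $l_{\vec{x}}$-analogue of Lemma \ref{B2x} with reversed inequalities via the open sets $S_{\xi}=\{\vec{x}:f(l_{\vec{x}})\le f(l_{\vec{x}_0})+\xi\}$ and the finiteness of critical values, then mirror Lemma \ref{IF:sum3} by picking the point of $N$ attaining the extremal value of $f(l_{\cdot})$ and applying the dichotomy there. In fact your write-up is slightly cleaner than the paper's, which contains a typo at the corresponding step (it writes ``least upper bound of $f(L_{\vec{x}})$'' where the greatest lower bound of $f(l_{\vec{x}})$ is meant), and you correctly flag that the finiteness of $f(F)$ is an assumption being used rather than an automatic fact.
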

\begin{proof}
Using the fact that if $T$ is a homeomorphism of $\mathbb{R}^d$ onto itself, $T^{-1}$ is defined and either $x=T^{-1}(x)$ or $f(T^{-1}(\vec{x}))<f(\vec{x})$, we can modify the above arguments by replacing $T$ with $T^{-1}$ and reversing the inequalities to have the results about the function $f(l_{\vec{x}})$. Suppose there is a neighborhood $N_1$ of $\vec{x}_0$ in $\mathbb{R}^d$ such that $f(l_{\vec{x}_0})\le f(l_{\vec{x}})$ for all $\vec{x}\in N_1$. Let $\xi$ be a positive number. Let $S_{\xi}=\{\vec{x}:f(l_{\vec{x}})\le f(l_{\vec{x}_0})+\xi\}$. We show that $S_{\xi}$ is open. If $\vec{x}$ is an element of $S_{\xi}$, there is an $m$ such that $f(T^m(\vec{x}))<f(l_{\vec{x}_0})+\xi$. Since $T^m$ is continuous, there is a neighborhood $N_{\vec{x}}$ of $\vec{x}$ such that $f(T^m(\vec{y}))<f(l_{\vec{x}_0})+\xi$ for all $\vec{y}$ in $N_{\vec{x}}$. But $f(l_{\vec{y}})\le f(T^m(\vec{y}))$ for all $\vec{y}\in\mathbb{R}^d$ so that $f(l_{\vec{y}})\in S_{\xi}$ for all $\vec{y}\in N_{\vec{x}}$. Hence $S_{\xi}$ is open. Let $N(\xi)=S_{\xi}\cap N_{\vec{x}_0}$. Since $\vec{x}_0$ is an element of $S_{\xi}$ for all positive $\xi$, $N(\xi)$ is not empty for $\xi>0$. Since $N(\xi)$ is contained in $N_{\vec{x}_0}$ and $S_{\xi}$, $f(l_{\vec{x}_0})\le f(l_{\vec{x}})\le f(l_{\vec{x}_0})+\xi$ for all $\vec{x}$ in $N(\xi)$. Since the points of $l_{\vec{x}}$ are in the fixed point set $F$, $f(l_{\vec{x}})$ can assume only finitely many values. Hence for $\xi$ sufficiently small 
\[
f(l_{\vec{x}_0})\le f(l_{\vec{x}})\le f(l_{\vec{x}_0})+\xi
\]
implies that $f(l_{\vec{x}})=f(l_{\vec{x}_0})$, and so for some $\xi$, $\vec{x}\in N(\xi)$ implies that $f(l_{\vec{x}})=f(l_{\vec{x}_0})$. Next, suppose $\vec{x}_0$ is an element of $\mathbb{R}^d$ and there is no neighborhood $U$ of $\vec{x}_0$ in $\mathbb{R}^d$ such that $f(l_{\vec{x}})=f(l_{\vec{x}_0})$ for all $\vec{x}$ in $U$. Let $N$ be a neighborhood of $\vec{x}_0$. According to the above arguments, there is an element $\vec{x}$ of $N$ such that $f(l_{\vec{x}})<f(l_{\vec{x}_0})$. Let $K$ be the least upper bound of $f(L_{\vec{x}})$ for $\vec{x}$ in $N$. Since the range of $f(l_{\vec{x}})$ is finite, there is a point $\vec{y}$ of $N$ such that $f(l_{\vec{y}})=K$. Thus $f(l_{\vec{y}})\le f(l_{\vec{x}})$ for all $\vec{x}$ in $N$, and $N$ is a neighborhood of $\vec{y}$. Thus there is a neighborhood $U$ of $\vec{y}$ such that $f(l_{\vec{y}})=f(l_{\vec{x}})$ for all $\vec{x}\in U$. Let $U_N=N\cap U$, the proof completes.
\end{proof}

\noindent
\textbf{Proof of Theorem \ref{IF:construction}}
\begin{proof}
Next we complete the proof of theorem. For each $\vec{x}\in \mathbb{R}^d-F$, $\Phi(\vec{x})$ is convergent. Let $G_1$ be the set of all elements $\vec{x}$ of $\mathbb{R}^d$ such that $f(L_{\vec{x}})$ is constant in a neighborhood of $\vec{x}$. Let $G_2$ be the set of all elements $\vec{x}$ of $\mathbb{R}^d$ such that $f(l_{\vec{x}})$ is a constant in a neighborhood of $\vec{x}$. Notice that $G=(\mathbb{R}^d-F)\cap G_1\cap G_2$ is an open dense subset of $\mathbb{R}^d-F$. For each $\vec{x}\in \mathbb{R}^d$, let 
\[
S(\vec{x})=\sum_{n=-\infty}^{\infty}f(T^{n-1}(\vec{x}))-f(T^n(\vec{x})).
\]
Clearly $S(\vec{x})$ converges at each $\vec{x}$ to $f(L_{\vec{x}})-f(l_{\vec{x}})$. Let $\vec{y}$ be an element of $G$. There is a neighborhood $U$ of $\vec{y}$ such that $S(\vec{x})$ represents the constant function in $U$. Since $\vec{y}\notin F$ and $F$ is compact, there is a neighborhood $W$ of $\vec{y}$ such that $\bar{W}\subset U\cap V$. Then $S(\vec{x})$ is a series of positive terms converging to a continuous function on $\bar{W}$ and $S(\vec{x})$ converges uniformly on $\bar{W}$. Let $p(\vec{x})$ be any bounded function continuous on $\mathbb{R}^d$. The series
\[
\Phi(\vec{x})=\sum_{n=-\infty}^{\infty}p(T^n(\vec{x}))\left(f(T^{n-1}(\vec{x}))-f(T^n(\vec{x}))\right)
\]
converges uniformly on $\bar{W}$ since $f$ is taken to be bounded on $\mathbb{R}^d$. Since $p$, $f$ and $T$ are continuous, $\Phi(\vec{x})$ is continuous on $\bar{W}$ and hence at $\vec{y}$. The invariance is obvious, so the proof completes.
\end{proof}

\noindent
\textbf{Proof of Corollary \ref{IF:construction discrete crit}}
We first recall the Center-Stable Manifold Theorem.
\begin{theorem}[Center-Stable Manifold Theorem, \cite{shub}]\label{Center-Stable Manifold Theorem}
Let $0$ be a fixed point for the $C^r$ local diffeomorphism $\phi : U \to E$, where $U$ is a neighborhood of $0$ in the Banach space $E$. Suppose that $E = E_s \oplus E_u$, where $E_u$ is the span of the eigenvectors corresponding to eigenvalues less than or equal to $1$ of $D\phi(0)$, and $E_u$ is the span of eigenvalues greater than $1$ of $D\phi(0)$. Then there exist a $C^r$ embedded disk $W^{cs}_{loc}$ that is tangent to $E_s$ at $0$ called the local stable center manifold. Moreover, there exist a neighborhood $B$ of $0$ such that $\phi(W^{cs}_{loc}) \cap B \subset W^{cs}_{loc}$, and $\cap^{\infty}_{k=0} \phi^{-k}(B) \subset W^{cs}_{loc}$.
\end{theorem}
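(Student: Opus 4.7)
\medskip

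\noindent\textbf{Proof plan.} The plan is to prove the theorem by Hadamard's graph-transform method, constructing $W^{cs}_{loc}$ as the graph of a Lipschitz (later $C^r$) function $h^*:E_s\to E_u$. First I would localize and extend: pick a smooth bump $\chi$ supported in a small ball $B_\rho(0)\subset U$ and replace $\phi$ by $\tilde\phi(x)=Ax+\chi(\|x\|/\rho)\,g(x)$, where $A=D\phi(0)$ and $g(x)=\phi(x)-Ax$. By choosing $\rho$ small enough and using $g(0)=0$, $Dg(0)=0$, the nonlinear part of $\tilde\phi$ is globally Lipschitz with arbitrarily small constant $\epsilon$, and $\tilde\phi\equiv\phi$ on $B_{\rho/2}(0)$. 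All subsequent constructions are carried out for $\tilde\phi$, and $W^{cs}_{loc}$ will be defined as the intersection of the resulting global invariant graph with a neighborhood $B\subset B_{\rho/2}(0)$ of $0$.

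Next I would put adapted norms on $E=E_s\oplus E_u$ so that $\|A_s\|\le\lambda$ and $\|A_u^{-1}\|\le\mu^{-1}$ with $1\le\lambda<\mu$ (possible because $\mathrm{spec}(A_u)$ is strictly outside the closed unit disk, while $\mathrm{spec}(A_s)$ lies inside or on the unit circle). Writing $\tilde\phi(x_s,x_u)=(A_sx_s+g_s,\,A_ux_u+g_u)$ with $\mathrm{Lip}(g_s),\mathrm{Lip}(g_u)\le\epsilon$, define the graph transform $\Gamma$ on the space
\[
\mathcal G=\bigl\{h:E_s\to E_u\ :\ h(0)=0,\ \mathrm{Lip}(h)\le L\bigr\}
\]
by requiring that the image of the graph of $h$ under $\tilde\phi$ is again a graph: concretely, $\Gamma(h)$ is the unique map satisfying $\Gamma(h)(\xi_s)=A_u h(\eta)+g_u(\eta,h(\eta))$ where $\xi_s=A_s\eta+g_s(\eta,h(\eta))$. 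The key step is the pair of estimates (i) $\Gamma:\mathcal G\to\mathcal G$, and (ii) $\Gamma$ is a contraction in sup-norm with rate $\sim\lambda/\mu<1$. Both follow from standard computations once $\epsilon$ is small enough compared to the spectral gap $\mu-\lambda$; the unique fixed point $h^*\in\mathcal G$ defines the candidate manifold $W=\mathrm{graph}(h^*)$, which is forward-invariant under $\tilde\phi$ by construction, and $h^*(0)=0$ is automatic.

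To upgrade $h^*$ from Lipschitz to $C^r$ and to verify tangency $Dh^*(0)=0$, I would work in fiber bundles of $C^k$ jets. The lift of $\Gamma$ to the $k$-jet bundle is a fiber-contraction (the Hirsch--Pugh--Shub fiber contraction theorem) provided $\lambda^k/\mu<1$, which holds for every $k\le r$ as long as $\epsilon$ was chosen small enough depending on $r$; its unique fixed section is the $k$-jet of $h^*$, giving $h^*\in C^r$. Differentiating the fixed-point relation at $0$ and using $g_s,g_u$ vanish to first order shows $Dh^*(0)=0$, so $W^{cs}_{loc}:=W\cap B$ is tangent to $E_s$. Forward invariance $\tilde\phi(W)\subset W$ gives $\phi(W^{cs}_{loc})\cap B\subset W^{cs}_{loc}$ directly.

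The main obstacle, and the place I would spend most of the work, is proving the characterization $\bigcap_{k\ge0}\phi^{-k}(B)\subset W^{cs}_{loc}$ and doing so despite the non-strict contraction in $E_s$ (eigenvalues only $\le 1$, not $<1$). The standard trick is a cone argument: define a vertical cone $C_v=\{(v_s,v_u):\|v_s\|\le\|v_u\|/L\}$ and show $D\tilde\phi(x)\,C_v\subset C_v$ and $\|D\tilde\phi(x)v\|\ge\mu'\|v\|$ for $v\in C_v$ with $\mu'>1$; symmetrically, the horizontal cone $C_h$ is invariant under $D\tilde\phi^{-1}$. If $y\in B$ does not lie on $\mathrm{graph}(h^*)$, then the difference $y-(y_s,h^*(y_s))$ has nonzero $E_u$-component, hence sits in a vertical cone; the expansion estimate then forces $\|\tilde\phi^k(y)\|\to\infty$, contradicting $\tilde\phi^k(y)\in B$ for all $k\ge 0$. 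The delicate part is that $\lambda$ may equal $1$, so the cone field must be chosen using the adapted norm and the Lipschitz constant $L$ so that expansion in $E_u$ alone yields $\mu'>1$; once the adapted norms and the smallness of $\epsilon$ are arranged so that $\mu-L\epsilon>1+L\epsilon+L\lambda$, the argument goes through and the inclusion is established.
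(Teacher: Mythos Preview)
The paper does not prove this theorem. It is quoted verbatim from Shub's monograph (the \texttt{\textbackslash cite\{shub\}} in the theorem header) and used as a black box in the proof of Corollary~\ref{IF:construction discrete crit}; the only accompanying text is the remark that ``in our proof, we focus on the Euclidean space and this theorem implies a simple fact for the dynamical system defined by the gradient descent: at each strict saddle point of $f$, there exist stable and unstable manifolds that are locally homeomorphic to the stable and unstable subspaces.'' So there is no in-paper proof for you to be compared against.

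That said, your plan is the standard Hadamard graph-transform route that Shub himself follows, and the outline is sound: localize via a cutoff to make the nonlinearity globally small-Lipschitz, choose adapted norms realizing the spectral gap $\lambda\le 1<\mu$, run the graph transform on a complete space of Lipschitz graphs $h:E_s\to E_u$, and use fiber contraction for the $C^r$ upgrade. Two small cautions. First, your smoothness condition ``$\lambda^k/\mu<1$ for every $k\le r$'' is not the right bookkeeping here: since $\lambda\le 1$, that inequality is automatic for all $k$ once $\mu>1$, yet center-stable manifolds are in general only $C^r$ for the fixed finite $r$ you localized for (the cutoff radius depends on $r$), not $C^\infty$; make sure your write-up reflects that the smallness of $\epsilon$ is chosen \emph{after} fixing $r$. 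Second, the displayed inequality $\mu-L\epsilon>1+L\epsilon+L\lambda$ at the end is not quite the cone condition you want; the clean requirement is that vectors in the vertical cone expand by a factor strictly larger than $1$ under $D\tilde\phi$, which amounts to something like $(\mu-\epsilon(1+L))>1$ together with cone invariance $(\lambda+\epsilon(1+L))/(\mu-\epsilon(1+L))<1/L$. With those adjustments the characterization $\bigcap_{k\ge0}\phi^{-k}(B)\subset W^{cs}_{loc}$ goes through exactly as you describe.
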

The theorem applies to a very general framework where the underlying space is Banach manifold. In our proof, we focus on the Euclidean space and this theorem implies a simple fact for the dynamical system defined by the gradient descent: at each strict saddle point of $f$, there exist stable and unstable manifolds that are locally homeomorphic to the stable and unstable subspaces.
\begin{proof}
Recall the proof of Theorem \ref{IF:construction}, the open dense subset $G$ is the following
\[
G=(\BR^d-F)\cap G_1\cap G_2
\]
where $G_1$ is the set of elements $\vec{x}$ such that $f(L_{\vec{x}})$ is constant in a neighborhood of $\vec{x}$ and $G_2$ is the set of elemnts $\vec{x}$ such that $f(l_{\vec{x}})$ is constant in a neighborhood of $\vec{x}$. In the case when the critical points are at most countable and all saddle points are strict, by the Center-stable Manifold Theorem, there exist stable and unstable manifolds whose dimensions are strictly less than $d$ corresponding to the strict saddle points. Then in such case, denote $W^s_{\vec{x}^*}$ and $W^u_{\vec{x}^*}$ the stable and unstable manifolds of the saddle point $\vec{x}^*$ respectively, we have
\[
G_1=G_2=\BR^d-\bigcup_{\vec{x}^*}\left(W^s_{\vec{x}^*}\cup W^u_{\vec{x}^*}\right)
\]
where the set of $\vec{x}^*$ is at most countable. Then the set $G$ is nothing but the following
\[
G=\mathbb{R}^d-F-\bigcup_{\vec{x}^*}\left(W^s_{\vec{x}^*}\cup W^u_{\vec{x}^*}\right).
\]
Note that the set of cirtical points and countable union of stable and unstable manifolds are all of measure zero, thus the complement of $G$ is of measure zero.
\end{proof}

\noindent
\textbf{Proof of Theorem \ref{IF:compact}}
We give the detailed proof by modifying the proof of Theorem \ref{invariant functions:GD} and \ref{IF:construction} into two parts: 
\begin{itemize}
\item \textbf{Existence of many invariant functions}
\item \textbf{Representation of invariant functions}
\end{itemize}
\begin{proof}
\textbf{Existence of many invariant functions} 
\begin{lemma}\label{fundlemma1x}
 For each $\vec{x}_0\in M-F$, there exists a neighborhood $N$ of $\vec{x}_0$, such that $T^r(N)\cap T^s(N)=\emptyset$ if $r\ne s$.
\end{lemma}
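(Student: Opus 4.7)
The plan is to transplant the three-step proof of Lemma \ref{fundlemma1} from the Euclidean setting to the compact product-of-simplices $M$, using as inputs (a) that MWU with sufficiently small learning rates is a homeomorphism of $M$ onto itself, and (b) that $f$ strictly decreases along every non-fixed iterate, i.e., $f(T(\vec{x}))<f(\vec{x})$ whenever $\vec{x}\in M-F$. Input (a) is standard for small $\epsilon_i$: the Jacobian $DT$ is a perturbation of the identity, so $T$ is a local diffeomorphism on $M$; properness is automatic since $M$ is compact, and a boundary-preserving argument (MWU leaves each face of the simplex invariant) together with Theorem \ref{proper map} applied face-by-face yields a global homeomorphism. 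Input (b) is the MWU analog of Lemma \ref{lemma:IF of GD} and follows from a Taylor expansion of $f\circ T$ around $\vec{x}$, where the first-order term gives the standard $-c\,\|\grad f(\vec{x})\|^2$ descent and the second-order remainder is absorbed for small enough $\epsilon_i$ using compactness of $M$ to get uniform Lipschitz bounds on $\nabla^2 f$.

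With (a) and (b) in hand, I follow the original three steps verbatim. First, fix $\vec{x}_0\in M-F$ and set $\Delta=f(T^{-1}(\vec{x}_0))-f(\vec{x}_0)>0$ (positive by (b) applied to $T^{-1}(\vec{x}_0)$). By continuity of $f$, pick a neighborhood $U$ of $\vec{x}_0$ with $f(\vec{x})<f(\vec{x}_0)+\Delta/3$ on $U$ and a neighborhood $V$ of $T^{-1}(\vec{x}_0)$ with $f(\vec{y})>f(T^{-1}(\vec{x}_0))-\Delta/3$ on $V$. Because $T$ is a homeomorphism by (a), $T(V)$ is open, so $N:=T(V)\cap U$ is a neighborhood of $\vec{x}_0$ whose points have strictly smaller $f$-values than those of $T^{-1}(N)\subset V$; hence $T^{-1}(N)\cap N=\emptyset$.

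Second, I extend this to $T^{-m}(N)\cap N=\emptyset$ for all $m\ge 1$ by iterating (b): any $\vec{z}=T^{-m}(\vec{u})$ with $\vec{u}\in N$ satisfies $f(\vec{z})>f(T^{-1}(\vec{u}))>f(\vec{x})$ for every $\vec{x}\in N$, so $\vec{z}\ne\vec{x}$. Third, for $r>s$, if $\vec{y}\in T^{-r}(N)\cap T^{-s}(N)$ then $T^{s}(\vec{y})\in N\cap T^{-(r-s)}(N)$, contradicting what was just shown; hence $T^{-r}(N)\cap T^{-s}(N)=\emptyset$ for $r\ne s$. Applying $T^{\min(r,s)}$ translates this into $T^{r}(N)\cap T^{s}(N)=\emptyset$, which is the claim.

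The main obstacle is input (b), the strict descent along MWU on $M$, because MWU is not literally gradient descent: the denominators in \eqref{MWU}, \eqref{MWUexp} mix the gradient coordinates, and on the boundary of $M$ some coordinates are permanently zero, shrinking the effective dimension. The cleanest way to handle this is to restrict attention to the relative interior of the face containing $\vec{x}_0$, view MWU as a perturbation of replicator/projected-gradient dynamics there, and use a Taylor expansion to control the second-order remainder uniformly in $\vec{x}_0$ thanks to compactness of $M$. Once this descent estimate is available in every face, the rest of the argument above proceeds unchanged, and the same second-countable-cover construction used in Lemma \ref{fundlemma3} then yields a fundamental set for $T$ on $M$, feeding the representation-of-invariants machinery that completes Theorem \ref{IF:compact}.
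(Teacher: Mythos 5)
Your proof is correct and follows essentially the same route as the paper's: the identical three-step argument (first $T^{-1}(N)\cap N=\emptyset$ via the $\Delta/3$ separation of $f$-values, then $T^{-m}(N)\cap N=\emptyset$ by iterated descent, then the general case by translating with a power of $T$). The extra discussion of the inputs --- that $T$ is a global homeomorphism of $M$ and that $f$ strictly decreases off the fixed-point set --- matches what the paper establishes separately in the surrounding text of Theorem \ref{IF:compact} rather than inside this lemma.
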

\begin{proof}
1. For each $x_0\in M-F$, there exists $N$ such that $T^{-1}(N)\cap N=\emptyset$.
\\
Since $f(T^{-1}(\vec{x}_0))-f(\vec{x}_0)=\Delta>0$ (clearly it is from that $f(T^{-1}(\vec{x}))>f(TT^{-1}(\vec{x}))$), $f$ is continuous, so there exists neighborhood $U$ of $\vec{x}_0$ such that $f(\vec{x})<f(\vec{x}_0)+\frac{\Delta}{3}$ for all $\vec{x}\in U$, and neighborhood $V$ of $T^{-1}(\vec{x}_0)$ such that $f(\vec{y})>f(T^{-1}(\vec{x}_0))-\frac{\Delta}{3}$ for all $\vec{y}\in V$. We have that $T(V)\cap U$ is open since $T^{-1}$ is homeomorphism. Let $N$ be a neighborhood of $\vec{x}$ in $T(V)\cap U$, then $N\subset U$ and $T^{-1}(N)\subset V$. We have 
\[
f(\vec{x})<f(\vec{x}_0)+\frac{\Delta}{3}<f(T^{-1}(\vec{x}_0))-\frac{\Delta}{3}<f(\vec{y}) \ \ \ \text{for all}\ \ \ \vec{x}\in N, \ \ \vec{y}\in V.
\]
So $T^{-1}(N)\cap N=\emptyset$.
\\
\\
2. $T^{-m}(N)\cap N=\emptyset$ for all $m\ge 1$.
\\
Suppose $\vec{x}\in N$ and $\vec{z}\in T^{-m}(N)$, then $\vec{z}=T^{-m}(\vec{u})$ for some $\vec{u}\in N$, and then 
\[
f(\vec{z})=f(T^{-m}(\vec{u}))>f(T^{-1}(\vec{u}))>f(\vec{x}),
\]
since $T^{-1}(\vec{u})\in T^{-1}(N)$. This shows that $\vec{z}\ne \vec{x}$.
\\
\\
3. $T^{-r}(N)\cap T^{-s}(N)=\emptyset$ if $r\ne s$.
\\
Assume $r>s$, let $\vec{y}\in T^{-r}(N)\cap T^{-s}(N)$. Then $T^{r}(\vec{y})\in N$ and $T^{-r+s}(T^{r}(\vec{y}))=T^{s}(\vec{y})\in N$. So $T^{r}(\vec{y})\in N\cap T^{-r+s}(N)$ which is impossible from 2. 
\end{proof}
The following lemma is from \cite{Stebe}.
\begin{lemma}[Lemma 2.3, \cite{Stebe}]
Let $T$ be a homeomorphism of $M$ onto itself. There is a positive number $\epsilon$ such that if $\vec{x}$ is a point of $M$ but not a fixed point of $T$, there is at least one element of the sequence $\{T^n(\vec{x})\}$ at distance greater than or equal to $\epsilon$ from the set of fixed points of $T$.
\end{lemma}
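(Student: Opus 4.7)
The plan is to derive this lemma from the uniform asymptotic stability of the fixed-point set $F$, exploiting that $f$ is a strict Lyapunov function for $T$ on the compact space $M$. Since the step sizes $\epsilon_i$ are chosen small in Theorem \ref{IF:compact}, $f(T(\vec{x})) < f(\vec{x})$ for every $\vec{x} \notin F$, with equality iff $\vec{x} \in F$; compactness of $M$ supplies the remaining structure.

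The first step is to show that $F$ is globally asymptotically stable for $T$. Attraction follows from a LaSalle-type argument: $\{f(T^n(\vec{x}))\}$ is monotonically decreasing and bounded below on compact $M$, hence convergent, and any cluster point $\vec{y}$ of the orbit must satisfy $f(T(\vec{y})) = f(\vec{y})$ by continuity, forcing $\vec{y} \in F$. Stability follows from continuity of $f$ near the compact set $F$ together with the monotone-descent property: sublevel sets of the form $\{\vec{x}\in M : f(\vec{x}) \leq \inf_F f + \eta\}$ are forward invariant and shrink to $F$ as $\eta \to 0$. Theorem 4.19 of \cite{BhS} then promotes this to \emph{uniform} asymptotic stability on $M$, so for every prescribed $\delta > 0$ there is a uniform $N(\delta)$ with $d(T^n(\vec{x}),F) < \delta$ for all $n \geq N(\delta)$ and all $\vec{x} \in M$.

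Applying the identical argument to $T^{-1}$, for which $-f$ is a strict Lyapunov function (since $f(T^{-1}(\vec{x})) > f(\vec{x})$ off $F$), gives that backward iterates converge uniformly to $F$ as well. Extracting the uniform $\epsilon$ in the lemma then proceeds by contradiction: if no such $\epsilon$ existed, one could produce a sequence of non-fixed points $\vec{x}_k$ whose entire two-sided orbit is trapped in the $1/k$-neighborhood of $F$. By compactness $\vec{x}_k \to \vec{x}^* \in F$, and the $\alpha$- and $\omega$-limit $f$-values $c_-^{(k)} > f(\vec{x}_k) > c_+^{(k)}$ along these orbits both collapse to $f(\vec{x}^*)$; combined with the quantitative descent bound $\inf\{f(\vec{x}) - f(T(\vec{x})) : d(\vec{x}, F) \geq \delta\} > 0$ (from continuity of $f - f\circ T$ and compactness of $M$), this contradicts the uniform convergence rate produced in the previous step.

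The main obstacle I expect is the final contradiction when the fixed-point set $F$ is not discrete. In the clean case where $f$ has isolated critical points, the orbit of any non-fixed $\vec{x}_k$ with $c_-^{(k)} > c_+^{(k)}$ must transit the intermediate level $\{f = (c_-^{(k)} + c_+^{(k)})/2\}$, which sits at positive distance from $F$, and the lemma falls out immediately. When $F$ is positive-dimensional or accumulates, the orbit could in principle drift along $F$ while tracking its level structure, and ruling this out requires the full strength of the Bhatia-Szego apparatus combined with the two-sided Lyapunov descent; reconciling these two ingredients quantitatively is the technical crux.
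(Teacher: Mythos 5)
The paper never actually proves this statement: it is imported verbatim as Lemma 2.3 of \cite{Stebe}, and its Euclidean twin (Lemma \ref{fundlemma2}) receives only the one-line justification ``asymptotic stability plus Theorem 4.19 of \cite{BhS}'' that you are trying to flesh out. Your attempt has a genuine gap at exactly that point. The LaSalle step is fine and shows that $F$ attracts every orbit, but $F$ is \emph{not} stable, let alone uniformly asymptotically stable, once it contains a saddle point or a local maximum of $f$: orbits starting arbitrarily close to such a fixed point leave every fixed neighborhood of $F$ along unstable directions, and they do so after arbitrarily long sojourns, so no uniform $N(\delta)$ exists. Your sublevel-set argument only establishes stability of the set of global minimizers (the sets $\{f\le \inf_M f+\eta\}$ shrink to that set, not to $F$), so the hypothesis of Theorem 4.19 of \cite{BhS} is not met and the uniform convergence rate your contradiction needs is unavailable. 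Independently, the contradiction itself is not closed: an orbit trapped in the $1/k$-neighborhood of $F$ has small \emph{per-step} descent, which says nothing about the total descent $c_-^{(k)}-c_+^{(k)}$ accumulated over infinitely many steps --- and you concede in your final paragraph that this is unresolved.

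In fact the lemma is false at the stated level of generality, so no argument of this shape can succeed without an extra hypothesis. Take $M=[0,1]$, $f(x)=x$, and an increasing homeomorphism $T$ with $T(x)<x$ exactly off $F=\{0\}\cup\{1/n:n\ge1\}$ (realizable as $T(x)=x-\eta g(x)$ with $g$ Lipschitz and vanishing exactly on $F$). The entire two-sided orbit of any $x\in(1/(n+1),1/n)$ stays in that interval, hence within $\tfrac{1}{2n(n+1)}$ of $F$, and no uniform $\epsilon$ exists. What rescues the lemma is finiteness of the set of critical values $f(F)$, which the paper uses tacitly elsewhere (``$f(L_{\vec x})$ can assume only finitely many values''). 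Under that hypothesis your ``clean case'' sketch is the right route and needs no stability input: either some orbit point is already at distance $\ge\delta_0$ from $F$, or the whole orbit lies within $\delta_0$ of $F$, in which case each step decreases $f$ by less than a third of the smallest gap $\gamma$ between consecutive critical values (choose $\delta_0$ by uniform continuity of $f-f\circ T$, which vanishes on $F$); since $f(L_{\vec x})>f(l_{\vec x})$ are distinct critical values, some orbit point then carries an $f$-value in the closed middle third of a gap, and the set of all such points is compact and disjoint from $F$, hence at uniform distance $\epsilon_1>0$ from it. Taking $\epsilon=\min(\delta_0,\epsilon_1)$ finishes the proof; I recommend replacing the stability/Bhatia--Szeg\H{o} route with this argument and stating the finiteness hypothesis explicitly.
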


\begin{lemma}\label{fundlemma3x}
Let $T:M\rightarrow M$ be a homeomorphism, then $T$ has a fundamental set.
\end{lemma}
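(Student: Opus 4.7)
The plan is to mimic the construction of Lemma \ref{fundlemma3}, adapting it to the compact setting, where the second-countability argument can be replaced by straightforward compactness. The statement above is preceded by a cited Lemma 2.3 of \cite{Stebe} that supplies a uniform $\epsilon>0$ such that every non-fixed orbit has at least one iterate at distance $\geq\epsilon$ from $F$, and by Lemma \ref{fundlemma1x} which supplies disjoint-iterate neighborhoods. These two ingredients are exactly what was needed in $\BR^d$ (Lemmas \ref{fundlemma1} and \ref{fundlemma2}), so the same construction should carry through.

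First I would set $D_\epsilon=\{\vec{x}\in M:\text{dist}(\vec{x},F)\geq\epsilon\}$. Since $M$ is a product of simplices, it is compact, so $D_\epsilon$ is compact and, by construction, disjoint from the fixed point set $F$. Next, for each $\vec{x}\in D_\epsilon$, invoke Lemma \ref{fundlemma1x} to produce a neighborhood $N_{\vec{x}}$ of $\vec{x}$ with $\{T^n(N_{\vec{x}})\}_{n\in\BZ}$ pairwise disjoint, shrinking $N_{\vec{x}}$ if necessary so that $\overline{N_{\vec{x}}}\cap F=\emptyset$ (possible because $\text{dist}(\vec{x},F)\geq\epsilon>0$). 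By compactness of $D_\epsilon$, extract a finite subcover $N_1,\ldots,N_r$, and then form
\[
L_1=N_1,\qquad L_k=N_k-\bigcup_{j=1}^{k-1}\bigcup_{n\in\BZ}T^n(N_j)\quad(2\leq k\leq r),\qquad S=\bigcup_{k=1}^{r}L_k.
\]

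The verification that $S$ meets Definition \ref{def:fundset} proceeds in two steps. For existence: given a non-fixed $\vec{x}$, the Stebe lemma yields some $T^{n_0}(\vec{x})\in D_\epsilon$, which lies in $N_i$ for a smallest index $i$; minimality together with the removals in $L_i$ forces $T^{n_0}(\vec{x})\in L_i\subset S$. For uniqueness: if two distinct iterates $T^{n_1}(\vec{x})$ and $T^{n_2}(\vec{x})$ both lie in $S$, they are in some $L_{i_1}$, $L_{i_2}$; the case $i_1=i_2$ contradicts the disjointness of $\{T^n(N_{i_1})\}$ from Lemma \ref{fundlemma1x}, while the case $i_1<i_2$ contradicts the explicit removal of $T^{n_2-n_1}(N_{i_1})$ from $N_{i_2}$ in the definition of $L_{i_2}$. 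The no-fixed-point condition is automatic from $N_i\cap F=\emptyset$.

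The main obstacle I anticipate is cosmetic rather than substantive: verifying that the neighborhoods $N_{\vec{x}}$ can simultaneously enjoy the disjoint-iterate property from Lemma \ref{fundlemma1x} and stay uniformly away from $F$; this is handled by intersecting with the open $\epsilon/2$-ball around $\vec{x}$. Beyond that, the compactness of $M$ actually simplifies matters compared to the Euclidean case, since a finite (rather than countable) cover suffices and convergence issues in the inductive removal do not arise.
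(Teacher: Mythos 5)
Your construction is essentially the paper's own: the paper forms the same $D_\epsilon$, takes the same disjoint-iterate neighborhoods from Lemma \ref{fundlemma1x}, and performs the same inductive removal $L_k=N_k-\bigcup_{j<k}\bigcup_{n\in\BZ}T^n(N_j)$, the only difference being that it invokes second countability to get a countable cover where you use compactness of $M$ to get a finite one --- both work, and your verification of the defining properties is in fact more explicit than the paper's. One small repair to your existence step: the minimality should be taken over all pairs $(n,i)$ with $T^{n}(\vec{x})\in N_i$ rather than over $i$ for the fixed iterate $n_0$, since a different iterate of $\vec{x}$ could land in an earlier $N_j$, in which case $T^{n_0}(\vec{x})$ lies in some $T^m(N_j)$ and is removed from $L_i$; minimizing the index over the whole orbit fixes this immediately.
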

\begin{proof}
Let $D_{\epsilon}=\{\vec{x}\in M:dist(x,F)\ge\epsilon\}$, where $\epsilon\ge 0$ may be chosen so that $D_{\epsilon}\ne\emptyset$. Since $D_{\epsilon}\cap F=\emptyset$, from Lemma \ref{fundlemma1}, for each $\vec{x}\in D_{\epsilon}$, there exists a neighborhood $N_{\vec{x}}$ of $\vec{x}$ such that $T^n(N_{\vec{x}})$ are disjoint. Since $M$ is a second countable space, $D_{\epsilon}$ has countable basis, i.e. it has countable open cover $N_1,...,N_r,...$. Then each sequence $T^n(\vec{x})$ for $\vec{x}\in M-F$, $T^n(\vec{x})$ only meets $N_i$ once.
\\
Let 
\begin{align}
L_1&=N_1
\\
L_2&=N_2-\cup_{-\infty}^{\infty}T^n(N_1)
\\
L_3&=N_3-\cup_{-\infty}^{\infty}T^n(N_1)-\cup_{-\infty}^{\infty}T^n(N_2)
\\
&\vdots
\\
L_r&=N_r-\cup_{-\infty}^{\infty}T^n(N_1)-...-\cup_{-\infty}^{\infty}T^n(N_{r-1}),
\\
&\vdots
\end{align}
then $S=\cup_{i=1}^{\infty} L_i$ is a fundamental set.
\end{proof}

\begin{lemma}\label{fundlemma4x}
Let $T$ be the gradient descent for $f:M\rightarrow\mathbb{R}$, and $F$ be the set of fixed points of $T$, there exist d $T$-invariant functions of $T$ which are continuous and independent on an open dense subset of $M-F$. 
\end{lemma}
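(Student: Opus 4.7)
The plan is to mirror, step for step, the Euclidean argument used for Lemma \ref{fundlemma4}, replacing $\mathbb{R}^d$ by the product of simplices $M$. All the preparatory pieces are already in place: Lemma \ref{fundlemma1x} gives, at every non-fixed point, a neighborhood whose $T$-iterates are pairwise disjoint, and Lemma \ref{fundlemma3x} combines these neighborhoods into a fundamental set $S$ for $T$ on $M$. So I would begin by fixing such an $S$, letting $\partial S$ denote its boundary in $M$, and setting $B := \bigcup_{n\in\mathbb{Z}} T^n(\partial S)$ and $M' := (M-F) - B$. Because each $T^n$ is a homeomorphism and $S$ is built out of countably many pieces whose orbits never self-intersect, $B$ is meager, so $M'$ is open and dense in $M-F$.

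The second step is to define the candidate $\varphi : M' \to S$ that sends $\vec{x}$ to the unique member of its orbit that lies in $S$; existence and uniqueness are exactly the defining property of a fundamental set (Definition \ref{def:fundset}). For any $\vec{x}\in M'$ take the integer $m$ with $T^m(\vec{x})\in \mathrm{int}(S)$, choose a small neighborhood $U\subset \mathrm{int}(S)$ of $T^m(\vec{x})$, and set $V := T^{-m}(U)$. Then $\varphi \equiv T^m$ on $V$, so $\varphi$ agrees locally with a diffeomorphism and is therefore continuous (in fact a local diffeomorphism) on all of $M'$. Writing $\varphi = (\varphi_1,\dots,\varphi_d)$ in any local coordinate chart on $S\subset M$, one obtains $d = \sum_i (n_i-1)$ functions; they are independent on $M'$ because $\varphi$ is a local diffeomorphism, and $T$-invariant because $\vec{x}$ and $T(\vec{x})$ share the same orbit, so they both get mapped to the same element of $S$ under $\varphi$.

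The main obstacle is justifying that $T$ is actually a homeomorphism of $M$ onto itself (so that Lemma \ref{fundlemma1x} and its successors apply as stated) and, more delicately, that $T^m$ is a local diffeomorphism on $M$. Unlike the Euclidean case of Theorem \ref{invariant functions:GD}, here we cannot invoke the Ho/Hopf criterion (Theorem \ref{proper map}) in the form stated, because $M$ has boundary strata. Instead one must exploit compactness of $M$: the Jacobian of either the linear or the exponential MWU update restricted to the tangent bundle of $M$ is the identity plus $O(\epsilon_i)$, so uniformly on $M$ it is invertible for sufficiently small learning rates; combined with the MWU-specific fact that the relative interior of every face of $M$ is forward- and backward-invariant and that the update is strictly monotone on $f$ (so injective on orbits), compactness upgrades local injectivity to a global bijection, hence a homeomorphism. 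With this in hand, Lemmas \ref{fundlemma1x}--\ref{fundlemma3x} furnish $S$, the above construction furnishes $\varphi$, and the lemma follows.
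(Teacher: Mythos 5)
Your construction of the invariant functions is the same as the paper's: fix a fundamental set $S$ (via Lemmas \ref{fundlemma1x} and \ref{fundlemma3x}), remove $B=\bigcup_n T^n(\partial S)$ and $F$, define $\varphi$ as the unique orbit representative in $S$, observe that $\varphi$ agrees with $T^m$ near each point of $M'$, and read off the $d$ components. That is exactly the argument given for Lemma \ref{fundlemma4x}, so the core of your proposal matches the paper.

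The one place you diverge is the justification that $T$ is a global homeomorphism of $M$. The paper does not put this inside the lemma at all: it is handled in the surrounding text of the proof of Theorem \ref{IF:compact}, where the authors use compactness of $M$ to pick a uniform learning rate making $T$ a local diffeomorphism, note that any continuous map from a compact space is proper, and then still invoke Theorem \ref{proper map} (the Ho criterion) on the grounds that $M$ is simply connected and compact Hausdorff. You instead argue that the Ho criterion does not apply cleanly because $M$ has boundary strata, and substitute a direct argument via invariance of the relative interiors of faces plus strict monotone decrease of $f$ to upgrade local injectivity to a global bijection. Your concern about the boundary is legitimate --- a product of simplices is a manifold with corners, not an open subset of $\mathbb{R}^d$, so the local-diffeomorphism claim and the hypotheses of Theorem \ref{proper map} need care there, and the paper glosses over this. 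Your alternative route buys a more honest treatment of the boundary at the cost of some MWU-specific bookkeeping (face invariance, injectivity on orbits); the paper's route is shorter but leans on a theorem whose hypotheses it does not fully verify. Either way, the lemma's own content is established identically in both arguments.
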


\begin{proof}
Let $S\subset M$ be a fundamental set for $T$. Let $\partial S$ be the boundary of $S$ and let $B=\cup_{-\infty}^{\infty}T^n(\partial S)$. Then $M-F-B$ is dense in $M-F$.
Denote $M'=M-F-B$ and recall that
\[
[\vec{x}]=\{T^n(\vec{x}):n\in\mathbb{Z}\}
\]
and 
\[
M'/T:=\{[\vec{x}]:\vec{x}\in M'\}.
\]
Let $\varphi(\vec{x})$ be the element of $\{T^n(\vec{x})\}$ in $S$. We next show that it is continuous. 
\\
If $\vec{x}\in M'$, $\varphi(\vec{x})$ is the unique intersection of $\{T^n(\vec{x})\}$ with $S$. Hence there is an integer $m$ such that $T^m(\vec{x})\in S$. Since $\vec{x}\in B$, $T^m(\vec{x})$ is an interior point of $S$. Let $U$ be a neighborhood of $T^m(\vec{x})$ in $S$. Since $T^m$ is continuous, $V=(T^m)^{-1}(U)=T^{-m}(U)$ is a neighborhood of $\vec{x}$. If $\vec{y}\in V$, $T^m(\vec{y})\in S$ so that $\varphi(\vec{y})=T^m(\vec{y})$ for all $\vec{y}\in V$. Hence $\varphi$ is continuous in a neighborhood of $\vec{x}\in M'$, $M'$ is open and $\varphi=T^m$ for some $m$ in a neighborhood of $\vec{x}\in M$.
\\
We set 
\[
\varphi(\vec{x})=(\varphi_1(\vec{x}),...,\varphi_d(\vec{x}))
\] 
so that the $\varphi_i(\vec{x})$ are the components of $\varphi(\vec{x})$, it follows that the $\varphi_i(\vec{x})$ are continuous and independent on $M'$, since $\varphi(\vec{x})$ is a local homeomorphism on $M'$. Since $\varphi(T(\vec{x}))=\varphi(\vec{x})$, $\varphi_i(T(\vec{x}))=\varphi_i(\vec{x})$, meaning that $\varphi_i(\vec{x})$ are $T$-invariant, and then the proof is complete.
\end{proof}
The existence of $d$ invariant functions relies on the property that $f(T_{\eta}(\vec{x}))<f(\vec{x})$. For each $\vec{x}\in M$, there exists a neighborhood $U_{\vec{x}}$ of $\vec{x}$ and stepsize $\eta_{\vec{x}}$ such that $f(T_{\eta_{\vec{x}}}(\vec{y}))<f(\vec{y})$ for all $\vec{y}\in U_{\vec{x}}$. Since $M$ is compact, one chooses $\eta$ from the finite subcovering of $\bigcup U_{\vec{x}}$ such that $f(T_{\eta}(\vec{x}))<f(\vec{x})$ holds for all $\vec{x}\in M$. On the other hand, since $T_{\eta}\rightarrow Id$ as $\eta\rightarrow 0$, the determinant of the Jacobian of $T_{\eta}$, denoted as $|J_{\eta}|$ is a continuous function with respect to $\eta$. $|J_{\eta}|\rightarrow 1$ as $\eta\rightarrow 0$, so at each point $\vec{x}$, one can choose $\eta$ small enough so that $T_{\eta}$ is a local diffeomorphism. By compactness of $M$, $\eta$ can be chosen such that for all $\vec{x}\in M$, $T_{\eta}$ is a local diffeomorphism. Since $M$ is simply-connected and compact Hausdorff space, so the pre-image of a compact set under $T$ is always compact since any open cover of the pre-image can be extended to an open cover of $M$, and from the compactness of $M$, one can choose a finite sub-cover of $M$, so a sub-cover of this pre-image. This means $T$ is a proper map. Thus theorem \ref{proper map} implies that $T_{\eta}$ is a global diffeomorphism, i.e. $T^{-1}$ is well defined on all over $M$.
\end{proof}
Now we have finished the proof of existence of $d$ invariant functions and we proceed to prove the representation of the invariant functions.

\begin{proof}
\textbf{Representation of invariant functions}
\begin{lemma}
The set of cluster points of $\{T^n(\vec{x})\}_{n\in\mathbb{Z}}$ is the union of $L_{\vec{x}}$ and $l_{\vec{x}}$. The value of $f$ is constant on each of $L_{\vec{x}}$ and $l_{\vec{x}}$. If $f(L_{\vec{x}})$ denotes the value of $f$ on $l_{\vec{x}}$ and $f(L_{\vec{x}})$ denotes the value of $f$ on $L_{\vec{x}}$ we have $f(L_{\vec{x}})>f(l_{\vec{x}})$ whenever $\vec{x}$ is not a fixed point of $T$ in $M$.
\end{lemma}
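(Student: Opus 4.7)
The plan is to mirror the structure of Lemma A.6 (the earlier IF:sum1 lemma for Euclidean gradient descent), but exploit compactness of $M$ in place of the global assumptions. The setup we can rely on is that, for small enough learning rates, the MWU map $T$ is a homeomorphism of $M$ onto itself (established in the existence-of-many-invariant-functions part above), and that $f(T(\vec{x})) < f(\vec{x})$ whenever $\vec{x} \notin F$. Combined with continuity of $f$ on the compact set $M$, $f$ is bounded, so the sequence $\{f(T^n(\vec{x}))\}_{n\ge 0}$ is strictly decreasing and bounded below, hence convergent to some limit $\alpha$. Applying the same reasoning to $T^{-1}$, the sequence $\{f(T^{-n}(\vec{x}))\}_{n\ge 0}$ is strictly increasing and bounded above, hence convergent to some limit $\beta$.

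To establish that $f$ is constant on $l_{\vec{x}}$, I would pick any $\vec{a}\in l_{\vec{x}}$ and, by definition of forward cluster point, extract a subsequence $n_i \to +\infty$ with $T^{n_i}(\vec{x}) \to \vec{a}$; continuity of $f$ then gives $f(\vec{a}) = \lim_i f(T^{n_i}(\vec{x})) = \alpha$, so $f \equiv \alpha$ on $l_{\vec{x}}$. The identical argument with a subsequence $n_j \to -\infty$ shows $f \equiv \beta$ on $L_{\vec{x}}$. This proves the two constancy statements and identifies $f(l_{\vec{x}}) = \alpha$, $f(L_{\vec{x}}) = \beta$.

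For the strict inequality $f(L_{\vec{x}}) > f(l_{\vec{x}})$ at non-fixed $\vec{x}$, I would combine the monotonicity bounds $\alpha \le f(T(\vec{x}))$ and $\beta \ge f(T^{-1}(\vec{x}))$ with the pointwise comparison $f(T^{-1}(\vec{x})) > f(\vec{x}) > f(T(\vec{x}))$ (which is simply two applications of the strict descent property, using that $T$ is a homeomorphism so $T^{-1}(\vec{x})$ is also not fixed). Chaining these yields $\beta \ge f(T^{-1}(\vec{x})) > f(T(\vec{x})) \ge \alpha$, giving the desired strict inequality. Finally, to identify the cluster set of the two-sided orbit with $L_{\vec{x}} \cup l_{\vec{x}}$, one observes that any cluster point is approached by infinitely many iterates $T^{n_k}(\vec{x})$, and the index sequence $\{n_k\}$ must admit either a subsequence tending to $+\infty$ or one tending to $-\infty$, placing the point in $l_{\vec{x}}$ or $L_{\vec{x}}$ respectively; the reverse inclusion is immediate.

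The only real subtlety — and what I would flag as the main obstacle — is invoking the strict descent $f(T(\vec{x})) < f(\vec{x})$ uniformly on $M - F$ for the MWU update rather than for Euclidean gradient descent, since MWU is a rational (or exponential) map rather than an additive step. This has already been dealt with in the existence-part argument above via a compactness/finite-subcover selection of the stepsize, so here one can simply invoke it; beyond that, the proof is a direct transcription of Lemma A.6, with compactness of $M$ replacing the Lipschitz-type Assumption used in the Euclidean case to guarantee boundedness of the monotone sequences.
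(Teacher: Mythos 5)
Your proof is correct and follows essentially the same route as the paper's: extract convergent subsequences for two cluster points, use continuity of $f$, and identify both limits with the common limit of the monotone sequence $\{f(T^n(\vec{x}))\}$ (decreasing forward, increasing backward, bounded by compactness of $M$). If anything your write-up is the more complete one, since the paper's proof only establishes constancy of $f$ on $l_{\vec{x}}$ and $L_{\vec{x}}$ and leaves the strict inequality and the identification of the cluster set implicit, whereas you supply both via the chain $\beta\ge f(T^{-1}(\vec{x}))>f(\vec{x})>f(T(\vec{x}))\ge\alpha$ and the $\pm\infty$ subsequence dichotomy.
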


\begin{proof}
Let $\vec{a},\vec{b}\in l_{\vec{x}}$, then by definition of cluster point, we have two subsequences $\{n_i\}_{\mathbb{Z}_+}$ and $\{n_j\}_{\mathbb{Z}_+}$ of $n\ge 0$, such that
\[
\lim_{i\rightarrow\infty}\norm{T^{n_i}(\vec{x})-\vec{a}}=0
\]
and
\[
\lim_{j\rightarrow\infty}\norm{T^{n_j}(\vec{x})-\vec{b}}=0.
\]
Since $f$ is continuous, we have that 
\[
\lim_{i\rightarrow\infty}f(T^{n_i}(\vec{x}))=f(\vec{a})
\]
and 
\[
\lim_{j\rightarrow\infty}f(T^{n_j}(\vec{x}))=f(\vec{b}).
\]
By the fact that $\lim_{i\rightarrow\infty}f(T^{n_i}(\vec{x}))=\lim_{i\rightarrow\infty}f(T^{n_i}(\vec{x}))=\text{local minimum with initial condition}\ \ \vec{x}$, we conclude $f(\vec{a})=f(\vec{b})$.
\\
The other case, let $\{n\}=\{n=-1,-2...\}$ be the sequence of negative integers and $\vec{c},\vec{d}\in L_{\vec{x}}$, there exist two subsequences negative integers $\{n_i\}_{i\in\mathbb{Z}_+}$ and $\{n_j\}_{j\in\mathbb{Z}_+}$ of $\{n\}$, such that
\[
\lim_{i\rightarrow\infty}\norm{T^{n_i}(\vec{x})-\vec{c}}=0
\]
and
\[
\lim_{j\rightarrow\infty}\norm{T^{n_j}(\vec{x})-\vec{d}}=0,
\]
and by the continuity of $f$, we have
\[
\lim_{i\rightarrow\infty}f(T^{n_i}(\vec{x}))=f(\vec{c})
\]
and 
\[
\lim_{j\rightarrow\infty}f(T^{n_j}(\vec{x}))=f(\vec{d}).
\]
Since $\lim_{i\rightarrow\infty}f(T^{n_i}(\vec{x}))=\lim_{j\rightarrow\infty}f(T^{n_j}(\vec{x}))=\text{local maximum with initial condition}\ \ \vec{x}$, $f(\vec{c})=f(\vec{d})$.
\end{proof}

\begin{lemma}\label{B2}
Let $\vec{x}_0$ be an element of $M$. Either there is a neighborhood $N$ of $\vec{x}_0$ such that $f(L_{\vec{x}})=f(L_{\vec{x}_0})$ for all $\vec{x}\in N$ or in every neighborhood of $\vec{x}_0$ there is an $\vec{x}$ such that $f(L_{\vec{x}})>f(L_{\vec{x}_0})$. 
\end{lemma}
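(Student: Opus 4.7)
The plan is to mirror the argument of Lemma \ref{B2x} almost verbatim, adapting it to the compact manifold $M$ in place of $\BR^d$. First I would negate the second alternative: assume there exists a neighborhood $N_1$ of $\vec{x}_0$ on which $f(L_{\vec{x}})\le f(L_{\vec{x}_0})$ holds pointwise. My goal is then to produce a (possibly smaller) neighborhood $N \subset N_1$ where equality $f(L_{\vec{x}})=f(L_{\vec{x}_0})$ holds throughout, which gives the first alternative of the dichotomy.

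The workhorse is the superlevel set
\[
S_{\xi}=\{\vec{x}\in M : f(L_{\vec{x}})\ge f(L_{\vec{x}_0})-\xi\},
\]
which I claim is open for every $\xi>0$. To see this, fix $\vec{x}\in S_\xi$. Because $f$ is strictly decreasing along forward $T$-iterates (it increases along $T^{-1}$), the quantity $f(L_{\vec{x}})$ equals $\sup_{n\in\BZ} f(T^n(\vec{x}))$, so there exists an integer $m$ with $f(T^m(\vec{x}))>f(L_{\vec{x}_0})-\xi$. Since $T$ is a homeomorphism of $M$ and both $T^m$ and $f$ are continuous, I can choose a neighborhood $N_{\vec{x}}$ of $\vec{x}$ on which $f\circ T^m$ still exceeds $f(L_{\vec{x}_0})-\xi$. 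The crude bound $f(L_{\vec{y}})\ge f(T^m(\vec{y}))$ then places $N_{\vec{x}}$ inside $S_\xi$, proving openness.

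With openness in hand, set $N(\xi)=S_\xi\cap N_1$, which is a nonempty open neighborhood of $\vec{x}_0$ (as $\vec{x}_0\in S_\xi$ for every $\xi>0$). Sandwiching on $N(\xi)$ gives
\[
f(L_{\vec{x}_0})\ge f(L_{\vec{x}})\ge f(L_{\vec{x}_0})-\xi
\]
for all $\vec{x}\in N(\xi)$. The proof is then completed by the same finiteness step used in Lemma \ref{B2x}: since $L_{\vec{x}}\subset F$ (the fixed point set of $T$), the value $f(L_{\vec{x}})$ lies in $f(F)$, and once $\xi$ is chosen smaller than the smallest gap in $f(F)$ near $f(L_{\vec{x}_0})$ the sandwich collapses to equality, so $N:=N(\xi)$ does the job.

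The main obstacle, and the only subtlety that goes beyond the $\BR^d$ case of Lemma \ref{B2x}, is justifying that $f(F)$ is locally discrete at $f(L_{\vec{x}_0})$ on the compact manifold $M$. In the Euclidean argument this was invoked as ``$f(L_{\vec{x}})$ assumes only finitely many values''; on compact $M$ the set $F$ is compact but $f(F)$ need not be finite without an additional non-degeneracy assumption (e.g.\ isolated critical values, or the strict-saddle hypothesis analogous to Corollary \ref{IF:construction discrete crit}). Under that hypothesis the above proof goes through verbatim. Without it, I expect one has to settle for a weaker conclusion, namely that $\vec{x}\mapsto f(L_{\vec{x}})$ is lower semicontinuous on $M$ (from the openness of $S_\xi$) and appeal to a Baire-type argument to get constancy on an open dense subset rather than at every point $\vec{x}_0$, which suffices for the downstream applications in Theorem \ref{IF:compact}.
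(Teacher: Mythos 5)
Your proposal follows essentially the same route as the paper's proof: the same superlevel set $S_{\xi}$, the same openness argument via continuity of $f\circ T^m$ and the bound $f(L_{\vec{y}})\ge f(T^m(\vec{y}))$, the same intersection with $N_1$ and sandwich, and the same finiteness step to collapse the sandwich to equality. The caveat you raise at the end is fair --- the paper simply asserts that $f(L_{\vec{x}})$ ``can assume only finitely many values'' without justification, exactly as in the Euclidean case --- so your proof matches the paper's, including sharing that unaddressed point.
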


\begin{proof}
Suppose there is a neighborhood $N_1$ of $\vec{x}_0$ in $M$ such that $f(L_{\vec{x}_0})\ge f(L_{\vec{x}})$ for all $x\in N_1$. Let $\xi$ be a positive number. Let $S_{\xi}=\{\vec{x}:f(L_{\vec{x}})\ge f(L_{\vec{x}_0})-\xi\}$. We show that $S_{\xi}$ is open. If $\vec{x}$ is an element of $S_{\xi}$, there is an $m$ such that $f(T^m(\vec{x}))>f(L_{\vec{x}_0})-\xi$. Since $T^m$ is continuous, there is a neighborhood $N_{\vec{x}}$ of $\vec{x}$ such that $f(T^m(\vec{y}))>f(L_{\vec{x}_0})-\xi$ for all $\vec{y}$ in $N_{\vec{x}}$. But $f(L_{\vec{y}})\ge f(T^m(\vec{y}))$ for all $\vec{y}\in M$ so that $f(L_{\vec{y}})\in S_{\xi}$ for all $y\in N_{\vec{x}}$. Hence $S_{\xi}$ is open. Let $N(\xi)=S_{\xi}\cap N_{\vec{x}_0}$. Since $\vec{x}_0$ is an element of $S_{\xi}$ for all positive $\xi$, $N(\xi)$ is not empty for $\xi>0$. Since $N(\xi)$ is contained in $N_{\vec{x}_0}$ and $S_{\xi}$, $f(L_{\vec{x}_0})\ge f(L_{\vec{x}})\ge f(L_{\vec{x}_0})-\xi$ for all $\vec{x}$ in $N(\xi)$. Since the points of $L_{\vec{x}}$ are in $F$, the set of fixed points of $T$, $f(L_{\vec{x}})$ can assume only finitely many values. Hence for $\xi$ sufficiently small
\[
f(L_{\vec{x}_0})\ge f(L_{\vec{x}})\ge f(L_{\vec{x}_0})-\xi
\]
implies that $f(L_{\vec{x}})=f(L_{\vec{x}_0})$, and so for some $\xi$, $\vec{x}\in N(\xi)$ implies that $f(L_{\vec{x}})=f(L_{\vec{x}_0})$.
\end{proof}

\begin{lemma}
Let $\vec{x}_0$ be an element of $M$. Either there is a neighborhood $N_{\vec{x}_0}$ of $\vec{x}_0$ in $M$ such that $f(L_{\vec{x}_0})=f(L_{\vec{x}})$ for all $\vec{x}$ in $N_{\vec{x}_0}$ or every neighborhood $N$ of $\vec{x}_0$ contains an open subset $V_{N}$ such that $f(L_{\vec{y}})=f(L_{\vec{z}})$ for all $\vec{y}$ and $\vec{z}$ in $V_{N}$.
\end{lemma}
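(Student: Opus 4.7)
The plan is to mirror the argument of Lemma~\ref{IF:sum3}, with the ambient space $\mathbb{R}^d$ replaced by the compact manifold $M$ (the product of simplices on which MWU acts). The previous lemma in the chain (the compact-manifold analog of Lemma~\ref{B2}) already supplies the dichotomy that either $f(L_{\vec{x}})$ is locally constant at $\vec{x}_0$, or every neighborhood of $\vec{x}_0$ contains a point $\vec{x}$ with $f(L_{\vec{x}}) > f(L_{\vec{x}_0})$. So the statement to be proved is only nontrivial when the second alternative holds, and the task reduces to extracting, out of any prescribed neighborhood of $\vec{x}_0$, an open subset on which $f(L_{\cdot})$ is constant.

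Under this assumption, fix an arbitrary neighborhood $N$ of $\vec{x}_0$ and set $K = \sup_{\vec{x} \in N} f(L_{\vec{x}})$. The heart of the argument is to show that $K$ is attained. Since every cluster point comprising $L_{\vec{x}}$ lies in the fixed point set $F$ of $T$, the value $f(L_{\vec{x}})$ is always some value of $f$ on $F$, and in the setting at hand $f|_F$ attains only finitely many distinct values (the critical values of $f$ over the product of simplices). Therefore the range of $f(L_{\cdot})$ is a finite subset of $\mathbb{R}$, so $K$ is a maximum, attained at some $\vec{y} \in N$.

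Now apply the dichotomy at $\vec{y}$ with $N$ as a neighborhood of it: the second alternative would produce a point $\vec{x} \in N$ with $f(L_{\vec{x}}) > f(L_{\vec{y}}) = K$, contradicting maximality of $K$ on $N$. Hence the first alternative holds at $\vec{y}$, producing a neighborhood $U$ of $\vec{y}$ such that $f(L_{\vec{x}}) = f(L_{\vec{y}})$ for every $\vec{x} \in U$. Setting $V_N := N \cap U$ yields an open subset of $N$ on which $f(L_{\cdot})$ is constant, which is exactly the desired conclusion.

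The main obstacle is the finiteness claim for the range of $f(L_{\cdot})$ that makes the sup-attainment step go through. In the $\mathbb{R}^d$ version the argument simply invoked that $f(L_{\vec{x}}) \in f(F)$ with $f(F)$ finite; transporting this to the MWU setting on a compact product of simplices is clean under an isolated-critical-points assumption on $f$ (the same sort of hypothesis used in Corollary~\ref{IF:construction discrete crit}), but is more delicate if one wants to remove such an assumption. A natural fallback is to restrict attention to the finitely many distinct values of $f$ on $F$ that can actually be realized as $f(L_{\vec{x}})$ for $\vec{x} \in N$, using the descent property $f(T(\vec{x})) < f(\vec{x})$ and compactness of $M$ to rule out accumulation of such values.
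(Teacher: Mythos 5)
Your argument is the same as the paper's: assume local constancy fails at $\vec{x}_0$, take $K=\sup_{\vec{x}\in N} f(L_{\vec{x}})$, use finiteness of the range of $f(L_{\cdot})$ (since $L_{\vec{x}}\subset F$) to find a maximizer $\vec{y}\in N$, and apply the preceding dichotomy lemma at $\vec{y}$ to rule out its second alternative and obtain a neighborhood $U$ with $f(L_{\cdot})$ constant, then set $V_N=N\cap U$. The finiteness issue you flag is also left essentially as an assertion in the paper, so your treatment matches it.
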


\begin{proof}
Suppose $\vec{x}_0$ is an element of $M$ and there is no neighborhood $U$ of $\vec{x}_0$ in $M$ such that $f(L_{\vec{x}})=f(L_{\vec{x}_0})$ for all $\vec{x}$ in $U$. Let $N$ be a neighborhood of $\vec{x}_0$. According to the lemma \ref{B2}, there is an element $\vec{x}$ of $N$ such that $f(L_{\vec{x}})>f(L_{\vec{x}_0})$. Let $K$ be the least upper bound of $f(L_{\vec{x}})$ for $\vec{x}$ in $N$. Since the range of $f(L_{\vec{x}})$ is finite, there is a point $\vec{y}$ of $N$ such that $f(L_{\vec{y}})=K$. Thus $f(L_{\vec{y}})\ge f(L_{\vec{x}})$ for all $\vec{x}$ in $N$, and $N$ is a neighborhood of $\vec{y}$. By lemma \ref{B2}, there is a neighborhood $U$ of $\vec{y}$ such that $f(L_{\vec{y}})=f(L_{\vec{x}})$ for all $\vec{x}\in U$. Let $V_{N}=N\cap U$.
\end{proof}

\begin{lemma}
Let $\vec{x}_0$ be an element of $M$. Either there is a neighborhood $N_{\vec{x}_0}$ of $\vec{x}_0$ in $M$ such that $f(l_{\vec{x}_0})=f(l_{\vec{x}})$ for all $\vec{x}$ in $N_{\vec{x}_0}$, or every neighborhood $N$ of $\vec{x}_0$ contains an open subset $U_{N}$ such that $f(l_{\vec{y}})=f(l_{\vec{z}})$ for all $\vec{y}$ and $\vec{z}$ in $U_N$.
\end{lemma}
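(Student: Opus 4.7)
The plan is to adapt the proof of Lemma~\ref{IF:sum4} (the same dichotomy, but stated over $\BR^d$) to the compact product of simplices $M$. The three ingredients used there continue to hold on $M$: the orbit values $f(T^m(\vec{x}))$ decrease strictly to $f(l_{\vec{x}})$ away from fixed points, each iterate $T^m$ is continuous, and $l_{\vec{x}}\subset F$. As in Lemma~\ref{IF:sum4}, the strategy is to first handle the case in which $\vec{x}_0$ admits a neighborhood giving a one-sided comparison of $f(l_{\cdot})$, and then to reduce the opposite case to that one.

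The first step is to assume there exists a neighborhood $N_1$ of $\vec{x}_0$ on which $f(l_{\vec{x}_0}) \le f(l_{\vec{x}})$ for every $\vec{x}\in N_1$, and to deduce Case~A. For each $\xi>0$ I set
\[
S_\xi := \{\vec{x}\in M : f(l_{\vec{x}}) \le f(l_{\vec{x}_0}) + \xi\},
\]
and show $S_\xi$ is open by the standard forward-iteration trick: if $\vec{x}\in S_\xi$, pick $m$ with $f(T^m(\vec{x})) < f(l_{\vec{x}_0}) + \xi$, extend this strict inequality to a neighborhood $N_{\vec{x}}$ via continuity of $T^m$ and of $f$, and use the monotonicity $f(l_{\vec{y}})\le f(T^m(\vec{y}))$ to conclude $N_{\vec{x}}\subset S_\xi$. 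Then $N(\xi):=S_\xi\cap N_1$ is a neighborhood of $\vec{x}_0$ on which $f(l_{\vec{x}_0}) \le f(l_{\vec{x}}) \le f(l_{\vec{x}_0}) + \xi$. Since $l_{\vec{x}}\subset F$ and the values of $f|_F$ near $f(l_{\vec{x}_0})$ form a discrete set, shrinking $\xi$ collapses the sandwich to equality on $N(\xi)$, producing the required neighborhood $N_{\vec{x}_0}$.

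The second step treats the negation of the previous hypothesis. If no such $N_1$ exists, every neighborhood of $\vec{x}_0$ contains a point $\vec{x}$ with $f(l_{\vec{x}}) < f(l_{\vec{x}_0})$. Given an arbitrary neighborhood $N$ of $\vec{x}_0$, let $K:=\inf_{\vec{x}\in N} f(l_{\vec{x}})$; by discreteness of the attained $l$-values $K$ is realized at some $\vec{y}\in N$, and then $N$ itself is a neighborhood of $\vec{y}$ on which $f(l_{\vec{y}})\le f(l_{\vec{x}})$. Applying the first step with $\vec{y}$ in place of $\vec{x}_0$ yields a relatively open set $U_N := N(\xi)\cap N$ containing $\vec{y}$ on which $f(l_{\cdot})$ is constantly equal to $f(l_{\vec{y}})$, which is exactly the Case~B subset.

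The main obstacle, as in the $\BR^d$ version, is justifying that $f(l_{\vec{x}})$ attains only discretely many values near $\vec{x}_0$; this is invoked twice, to collapse the sandwich in the first step and to attain the infimum in the second. In the compact setting this is cleaner because $F$ is closed in $M$ hence compact, so it suffices that $f(l_{\vec{x}_0})$ be an isolated value of $f|_F$. If one does not wish to assume outright that $f|_F$ is discrete-valued, the same argument goes through by weakening ``finitely many values'' to ``$f(l_{\vec{x}_0})$ admits a neighborhood in $\BR$ that meets the value set only at itself'', which is all that is actually used.
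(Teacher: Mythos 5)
Your proposal is correct and follows essentially the same route as the paper: the open sandwich sets $S_\xi$, the collapse to equality using that $l_{\vec{x}}\subset F$ forces only discretely many attained values, and the reduction of the second case to the first by passing to an extremizing point $\vec{y}\in N$. In fact your version is slightly cleaner, since you correctly take $K$ to be the \emph{infimum} of $f(l_{\vec{x}})$ over $N$ (the paper writes ``least upper bound'' with the inequality reversed, an evident slip carried over from the $L_{\vec{x}}$ case) and you isolate precisely the local discreteness of $f|_F$ near $f(l_{\vec{x}_0})$ that the argument actually needs.
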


\begin{proof}
Using the fact that if $T$ is a homeomorphism of $M$ onto itself, $T^{-1}$ is defined and either $x=T^{-1}(x)$ or $f(T^{-1}(\vec{x}))<f(\vec{x})$, we can modify the above arguments by replacing $T$ with $T^{-1}$ and reversing the inequalities to have the results about the function $f(l_{\vec{x}})$. Suppose there is a neighborhood $N_1$ of $\vec{x}_0$ in $M$ such that $f(l_{\vec{x}_0})\le f(l_{\vec{x}})$ for all $\vec{x}\in N_1$. Let $\xi$ be a positive number. Let $S_{\xi}=\{\vec{x}:f(l_{\vec{x}})\le f(l_{\vec{x}_0})+\xi\}$. We show that $S_{\xi}$ is open. If $\vec{x}$ is an element of $S_{\xi}$, there is an $m$ such that $f(T^m(\vec{x}))<f(l_{\vec{x}_0})+\xi$. Since $T^m$ is continuous, there is a neighborhood $N_{\vec{x}}$ of $\vec{x}$ such that $f(T^m(\vec{y}))<f(l_{\vec{x}_0})+\xi$ for all $\vec{y}$ in $N_{\vec{x}}$. But $f(l_{\vec{y}})\le f(T^m(\vec{y}))$ for all $\vec{y}\in M$ so that $f(l_{\vec{y}})\in S_{\xi}$ for all $\vec{y}\in N_{\vec{x}}$. Hence $S_{\xi}$ is open. Let $N(\xi)=S_{\xi}\cap N_{\vec{x}_0}$. Since $\vec{x}_0$ is an element of $S_{\xi}$ for all positive $\xi$, $N(\xi)$ is not empty for $\xi>0$. Since $N(\xi)$ is contained in $N_{\vec{x}_0}$ and $S_{\xi}$, $f(l_{\vec{x}_0})\le f(l_{\vec{x}})\le f(l_{\vec{x}_0})+\xi$ for all $\vec{x}$ in $N(\xi)$. Since the points of $l_{\vec{x}}$ are in the fixed point set $F$, $f(l_{\vec{x}})$ can assume only finitely many values. Hence for $\xi$ sufficiently small 
\[
f(l_{\vec{x}_0})\le f(l_{\vec{x}})\le f(l_{\vec{x}_0})+\xi
\]
implies that $f(l_{\vec{x}})=f(l_{\vec{x}_0})$, and so for some $\xi$, $\vec{x}\in N(\xi)$ implies that $f(l_{\vec{x}})=f(l_{\vec{x}_0})$. Next, suppose $\vec{x}_0$ is an element of $M$ and there is no neighborhood $U$ of $\vec{x}_0$ in $M$ such that $f(l_{\vec{x}})=f(l_{\vec{x}_0})$ for all $\vec{x}$ in $U$. Let $N$ be a neighborhood of $\vec{x}_0$. According to the above arguments, there is an element $\vec{x}$ of $N$ such that $f(l_{\vec{x}})<f(l_{\vec{x}_0})$. Let $K$ be the least upper bound of $f(L_{\vec{x}})$ for $\vec{x}$ in $N$. Since the range of $f(l_{\vec{x}})$ is finite, there is a point $\vec{y}$ of $N$ such that $f(l_{\vec{y}})=K$. Thus $f(l_{\vec{y}})\le f(l_{\vec{x}})$ for all $\vec{x}$ in $N$, and $N$ is a neighborhood of $\vec{y}$. Thus there is a neighborhood $U$ of $\vec{y}$ such that $f(l_{\vec{y}})=f(l_{\vec{x}})$ for all $\vec{x}\in U$. Let $U_N=N\cap U$, the proof completes.
\end{proof}

Next we complete the proof of theorem. For each $\vec{x}\in M-F$, $\Phi(\vec{x})$ is convergent. Let $G_1$ be the set of all elements $\vec{x}$ of $M$ such that $f(L_{\vec{x}})$ is constant in a neighborhood of $\vec{x}$. Let $G_2$ be the set of all elements $\vec{x}$ of $M$ such that $f(l_{\vec{x}})$ is a constant in a neighborhood of $\vec{x}$. Notice that $G=(M-F)\cap G_1\cap G_2$ is an open dense subset of $M-F$. For each $\vec{x}\in M$, let 
\[
S(\vec{x})=\sum_{n=-\infty}^{\infty}f(T^{n-1}(\vec{x}))-f(T^n(\vec{x})).
\]
Clearly $S(\vec{x})$ converges at each $\vec{x}$ to $f(L_{\vec{x}})-f(l_{\vec{x}})$. Let $\vec{y}$ be an element of $G$. There is a neighborhood $U$ of $\vec{y}$ such that $S(\vec{x})$ represents the constant function in $U$. Since $\vec{y}\notin F$ and $F$ is compact, there is a neighborhood $W$ of $\vec{y}$ such that $\bar{W}\subset U\cap V$. Then $S(\vec{x})$ is a series of positive terms converging to a continuous function on $\bar{W}$ and $S(\vec{x})$ converges uniformly on $\bar{W}$. Let $p(\vec{x})$ be any bounded function continuous on $M$. The series
\[
\Phi(\vec{x})=\sum_{n=-\infty}^{\infty}p(T^n(\vec{x}))\left(f(T^{n-1}(\vec{x}))-f(T^n(\vec{x}))\right)
\]
converges uniformly on $\bar{W}$ since $f$ is taken to be bounded on $M$. Since $p$, $f$ and $T$ are continuous , $\Phi(\vec{x})$ is continuous on $\bar{W}$ and hence at $\vec{y}$. The invariance is obvious, so the proof completes.
\end{proof}

\noindent
\textbf{Proof of Theorem \ref{IF:manifold}}
In this proof, we use unbold face $x$ to distinguish from $\vec{x}$ that represents a point in Euclidean space.
\begin{proof}
\textbf{Existence of invariant functions}
\\
The following descent lemma for Riemannian gradient descent \cite{CB} is analogy of classic gradient descent in Euclidean space. We include its proof for completeness. 
\begin{lemma}[Lemma C.2 \cite{CB}]
Let $\nabla\hat{f}_x$ be $l$-Lipschitz continuous along the line segment connecting $s_j$ to $s_{j+1}$, related by $s_{j+1}=s_j-\alpha\eta\nabla\hat{f}_x(s_j)$ with $\eta=1/l$ and $\alpha\in[0,1]$. Then,
\[
\hat{f}_x(s_{j+1})-\hat{f}_x(s_j)\le-\frac{\alpha\eta}{2}\norm{\nabla\hat{f}_x(s_j)}^2
\]
Moreover, $f(\Retr_x(s))\le f(x)$.
\end{lemma}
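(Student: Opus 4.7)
The plan is to establish the stated descent inequality by the classical one-dimensional integration argument applied to the pulled-back function $\hat{f}_x = f\circ \Retr_x$, which is an ordinary function on the Euclidean tangent space $T_xM$, and then derive the ``moreover'' clause as an immediate specialization at $s_j = 0$. The key point is that, although the setting is Riemannian, once we work in coordinates given by $\Retr_x$ the problem reduces to the standard Euclidean smoothness/descent lemma, and no intrinsic geometric machinery is needed.

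First I would parametrize the segment by $\gamma(t) = s_j + t(s_{j+1} - s_j)$ for $t\in[0,1]$ and invoke the fundamental theorem of calculus:
\[
\hat{f}_x(s_{j+1}) - \hat{f}_x(s_j) = \int_0^1 \langle \nabla\hat{f}_x(\gamma(t)),\, s_{j+1}-s_j\rangle\, dt.
\]
Splitting $\nabla\hat{f}_x(\gamma(t)) = \nabla\hat{f}_x(s_j) + (\nabla\hat{f}_x(\gamma(t)) - \nabla\hat{f}_x(s_j))$, applying Cauchy--Schwarz to the second piece, and using the $l$-Lipschitz hypothesis along the segment (which bounds $\norm{\nabla\hat{f}_x(\gamma(t)) - \nabla\hat{f}_x(s_j)}$ by $l\,t\,\norm{s_{j+1}-s_j}$) produces the textbook quadratic upper bound
\[
\hat{f}_x(s_{j+1}) \le \hat{f}_x(s_j) + \langle \nabla\hat{f}_x(s_j),\, s_{j+1}-s_j\rangle + \tfrac{l}{2}\norm{s_{j+1}-s_j}^2.
\]

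Next I would substitute $s_{j+1} - s_j = -\alpha\eta\nabla\hat{f}_x(s_j)$ and $\eta = 1/l$. The linear term becomes $-\alpha\eta\norm{\nabla\hat{f}_x(s_j)}^2$ and the quadratic term becomes $\tfrac{\alpha^2\eta}{2}\norm{\nabla\hat{f}_x(s_j)}^2$, so their sum equals $-\alpha\eta(1 - \alpha/2)\norm{\nabla\hat{f}_x(s_j)}^2$. Since $\alpha\in[0,1]$ forces $1 - \alpha/2 \ge 1/2$, the desired bound $\hat{f}_x(s_{j+1}) - \hat{f}_x(s_j) \le -\tfrac{\alpha\eta}{2}\norm{\nabla\hat{f}_x(s_j)}^2$ follows. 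For the ``moreover'' clause I would specialize to $s_j = 0$, recalling $\hat{f}_x(0) = f(\Retr_x(0)) = f(x)$, and observe that the right-hand side just derived is non-positive, so $f(\Retr_x(s)) = \hat{f}_x(s) \le \hat{f}_x(0) = f(x)$.

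The one place where any care is required is verifying that the Lipschitz hypothesis, which is stated only along the segment connecting $s_j$ to $s_{j+1}$, suffices to run the integration-based bound; this is immediate because the only values of $\nabla\hat{f}_x$ used in the calculus step lie on precisely that segment. Everything else is routine manipulation in the Euclidean tangent space, so I do not anticipate any genuine obstacle beyond keeping track that $\hat{f}_x$, and not $f$ itself, is the object to which the Euclidean descent lemma is applied.
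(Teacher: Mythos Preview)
Your argument is correct and is exactly the standard Euclidean descent-lemma proof (integral form of the mean value theorem, Lipschitz bound on the gradient increment, then substitute the step). The paper, despite announcing ``We include its proof for completeness,'' does not in fact supply any proof of this lemma in the appendix---it merely restates the lemma from \cite{CB} and immediately passes to its consequence $f(T(x_k))\le f(x_k)$. So there is nothing in the paper to compare against; your write-up fills in precisely the omitted classical computation.

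One small clarification worth making explicit in your ``moreover'' step: when you set $s_j=0$ you are implicitly using that $\nabla\hat f_x(0)=\grad f(x)$ (which follows from $D\Retr_x(0)=\mathrm{Id}$), so that the resulting $s=s_1=-\alpha\eta\nabla\hat f_x(0)$ is a Riemannian-gradient step and $f(\Retr_x(s))=\hat f_x(s_1)\le \hat f_x(0)=f(x)$. This is straightforward but is the only place the retraction axioms enter, and spelling it out removes any ambiguity about what ``$s$'' denotes in the last line of the lemma.
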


A direct consequence of above lemma is for manifold gradient descent $T(x_k)=\Retr_x(-\eta\grad f(x_k))$, we have $f(T(x_k))\le f(x_k)$.

\begin{lemma}\label{fundlemma1xx}
 For each $x_0\in\M-F$, there exists a neighborhood $N$ of $x_0$, such that $T^r(N)\cap T^s(N)=\emptyset$ if $r\ne s$.
\end{lemma}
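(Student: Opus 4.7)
The plan is to transplant the proofs of Lemma \ref{fundlemma1} and Lemma \ref{fundlemma1x} to the Riemannian setting verbatim, with the only substantive change being the mechanism that supplies strict descent of $f$ along the orbit. For step size $\eta<1/L$ and any $x\notin F$, apply the cited descent lemma with $s_0=\vec{0}$ and $s_1=-\eta\grad f(x)$: since the retraction satisfies $D\Retr_x|_{\vec{0}}=\mathrm{id}$, we have $\nabla\hat{f}_x(\vec{0})=\grad f(x)\ne\vec{0}$, and the descent bound yields $f(T(x))-f(x)\le-\tfrac{\eta}{2}\|\grad f(x)\|^2<0$. Equivalently, $f(T^{-1}(x_0))>f(x_0)$ whenever $x_0\notin F$, under the standing assumption (to be justified in subsequent lemmas, as in the Euclidean case) that $T$ is a homeomorphism of $\M$ so that $T^{-1}$ is defined.

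The construction of $N$ proceeds in three steps. First, set $\Delta:=f(T^{-1}(x_0))-f(x_0)>0$. By continuity of $f$, pick an open neighborhood $U$ of $x_0$ on which $f<f(x_0)+\Delta/3$ and an open neighborhood $V$ of $T^{-1}(x_0)$ on which $f>f(T^{-1}(x_0))-\Delta/3$. Since $T$ is a homeomorphism, $T(V)$ is open, so $N:=U\cap T(V)$ is an open neighborhood of $x_0$ with $N\subset U$ and $T^{-1}(N)\subset V$. Every $x\in N$ satisfies $f(x)<f(x_0)+\Delta/3$ while every $y\in T^{-1}(N)$ satisfies $f(y)>f(x_0)+2\Delta/3$, forcing $T^{-1}(N)\cap N=\emptyset$.

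Second, iterate: if $z\in T^{-m}(N)$ with $m\ge 1$, write $z=T^{-m}(u)$ for some $u\in N$. Strict descent under $T$ means strict increase under $T^{-1}$ along the backward orbit of $u$, so $f(z)\ge f(T^{-1}(u))>f(x_0)+2\Delta/3>f(x)$ for every $x\in N$, hence $T^{-m}(N)\cap N=\emptyset$. Third, for $r>s$, a point $y\in T^{-r}(N)\cap T^{-s}(N)$ would give $T^s(y)\in N\cap T^{-(r-s)}(N)$, contradicting the previous step with $m=r-s$. Applying $T^r$ (a homeomorphism) converts this to the desired disjointness of forward iterates $T^r(N)\cap T^s(N)=\emptyset$ for $r\ne s$.

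The only nontrivial obstacle is the standing assumption that $T$ is a \emph{global} homeomorphism, which is what lets us speak of $T^{-1}(x_0)$ and use $T(V)$ to manufacture $N$. In the Euclidean arguments this was extracted from the combination of (i) the small-step size guarantee that $DT(\vec{x})$ is non-singular everywhere, (ii) properness, and (iii) simple-connectedness via Theorem \ref{proper map}. The analogous manifold statement will presumably be handled in the lemmas that parallel Lemmas \ref{fundlemma2}--\ref{fundlemma4x}; once it is in hand, the argument above is routine and purely topological, so no genuinely new ingredient is needed beyond the Riemannian descent lemma.
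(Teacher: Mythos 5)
Your proposal follows essentially the same route as the paper: the same three-step argument (first $T^{-1}(N)\cap N=\emptyset$ via the $\Delta/3$ continuity trick, then $T^{-m}(N)\cap N=\emptyset$ by monotonicity of $f$ along backward orbits, then the general case by translating with $T^{s}$), resting on the Riemannian descent lemma and the standing global-homeomorphism property of $T$. Your explicit justification of \emph{strict} descent at non-fixed points via $f(T(x))-f(x)\le-\tfrac{\eta}{2}\norm{\grad f(x)}^2<0$ is a welcome clarification of a step the paper only asserts, but it is not a different argument.
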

\begin{proof}
The proof is completed in three steps.
\\
1. For each $x_0\in \M-F$, there exists $N$ such that $T^{-1}(N)\cap N=\emptyset$.
\\
Since $f(T^{-1}(x_0))-f(x_0)=\Delta>0$ (clearly it is from that $f(T^{-1}(x))>f(TT^{-1}(x))$), $f$ is continuous, so there exists neighborhood $U$ of $\vec{x}_0$ such that $f(x)<f(x_0)+\frac{\Delta}{3}$ for all $x\in U$, and neighborhood $V$ of $T^{-1}(x_0)$ such that $f(y)>f(T^{-1}(x_0))-\frac{\Delta}{3}$ for all $y\in V$. We have that $T(V)\cap U$ is open since $T^{-1}$ is homeomorphism. Let $N$ be a neighborhood of $x$ in $T(V)\cap U$, then $N\subset U$ and $T^{-1}(N)\subset V$. We have 
\[
f(x)<f(x_0)+\frac{\Delta}{3}<f(T^{-1}(x_0))-\frac{\Delta}{3}<f(y) \ \ \ \text{for all}\ \ \ x\in N, \ \ y\in V.
\]
So $T^{-1}(N)\cap N=\emptyset$.
\\
\\
2. $T^{-m}(N)\cap N=\emptyset$ for all $m\ge 1$.
\\
Suppose $x\in N$ and $z\in T^{-m}(N)$, then $z=T^{-m}(u)$ for some $\vec{u}\in N$, and then 
\[
f(z)=f(T^{-m}(u))>f(T^{-1}(u))>f(x),
\]
since $T^{-1}(u)\in T^{-1}(N)$. This shows that $z\ne x$.
\\
\\
3. $T^{-r}(N)\cap T^{-s}(N)=\emptyset$ if $r\ne s$.
\\
Assume $r>s$, let $y\in T^{-r}(N)\cap T^{-s}(N)$. Then $T^{r}(y)\in N$ and $T^{-r+s}(T^{r}(y))=T^{s}(y)\in N$. So $T^{r}(y)\in N\cap T^{-r+s}(N)$ which is impossible from 2.
\end{proof}

\begin{lemma}\label{fundlemma3xx}
Let $T:\M\rightarrow \M$ be a homeomorphism, then $T$ has a fundamental set.
\end{lemma}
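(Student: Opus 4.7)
The plan is to mimic the Euclidean proof of Lemma~\ref{fundlemma3} almost verbatim, using compactness of $\M$ (the product of simplices) in the two places where the non-compact argument invoked ad-hoc tools. First I would invoke the preceding Lemma (Lemma~2.3 of \cite{Stebe}, already restated above) to obtain a uniform $\epsilon>0$ such that for every $x\in\M-F$ at least one iterate $T^n(x)$ lies at distance $\ge\epsilon$ from the fixed-point set $F$. Define
\[
D_\epsilon=\{x\in\M:\mathrm{dist}(x,F)\ge\epsilon\}.
\]
Since $\M$ is compact and $F$ is closed, $D_\epsilon$ is a compact set disjoint from $F$, and every non-trivial orbit meets it.

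Next I would apply Lemma~\ref{fundlemma1xx} at each $x\in D_\epsilon$ to produce an open neighborhood $N_x$ whose integer iterates $T^n(N_x)$ are pairwise disjoint; shrinking $N_x$ if necessary, I also arrange $\overline{N_x}\cap F=\emptyset$. Because $\M$ is metrizable and second countable, the open cover $\{N_x\}_{x\in D_\epsilon}$ of $D_\epsilon$ admits a countable subcover $\{N_1,N_2,\dots\}$. (Compactness of $D_\epsilon$ actually gives a finite subcover, which makes the construction even simpler, but countability is all that is needed.) I then inductively peel off orbit closures:
\[
L_1=N_1,\qquad L_r=N_r-\bigcup_{i<r}\bigcup_{n\in\mathbb{Z}}T^n(N_i)\quad(r\ge 2),
\]
and set $S=\bigcup_{r\ge 1}L_r$.

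Finally I would verify the two clauses in Definition~\ref{def:fundset}. That $S\cap F=\emptyset$ is immediate, as each $N_i$ was shrunk away from $F$. To see that every orbit of a non-fixed point meets $S$ exactly once, pick $x\in\M-F$ and let $r^\ast$ be the smallest index $r$ for which the orbit enters $N_r$; such $r^\ast$ exists because the orbit visits $D_\epsilon\subset\bigcup_i N_i$. Choose $n^\ast$ with $T^{n^\ast}(x)\in N_{r^\ast}$. If $T^{n^\ast}(x)\in T^n(N_i)$ for some $i<r^\ast$, then $T^{n^\ast-n}(x)\in N_i$, contradicting minimality; hence $T^{n^\ast}(x)\in L_{r^\ast}\subset S$. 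For uniqueness, suppose $T^{n_1}(x),T^{n_2}(x)\in S$ with $n_1\ne n_2$, lying in $L_{r_1}$ and $L_{r_2}$ with $r_1\le r_2$. If $r_1=r_2$, then both points are in $N_{r_1}$, contradicting the pairwise disjointness of iterates of $N_{r_1}$ from Lemma~\ref{fundlemma1xx}. If $r_1<r_2$, then $T^{n_2}(x)=T^{n_2-n_1}(T^{n_1}(x))\in T^{n_2-n_1}(N_{r_1})$, contradicting the removal in the definition of $L_{r_2}$.

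The argument is almost entirely bookkeeping; the one place the compact-manifold hypothesis is really used is in extracting a countable (in fact finite) subcover of $D_\epsilon$, and in the uniform escape radius $\epsilon$ furnished by Lemma~2.3 of \cite{Stebe}. Both are automatic here because $\M$ is compact, metrizable, and second countable, so I expect no serious obstacle; the only subtlety worth double-checking is that one may indeed shrink each $N_x$ to stay away from $F$ while preserving the disjoint-iterates property, which is clear since the property in Lemma~\ref{fundlemma1xx} is inherited by any smaller neighborhood.
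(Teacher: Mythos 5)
Your proposal follows essentially the same route as the paper's proof: invoke Lemma~2.3 of \cite{Stebe} to get the uniform $\epsilon$ and the set $D_\epsilon$, cover it by the disjoint-iterate neighborhoods from Lemma~\ref{fundlemma1xx}, extract a countable subcover via second countability, and perform the inductive peeling $L_r=N_r-\bigcup_{i<r}\bigcup_n T^n(N_i)$. Your added verification that every nontrivial orbit meets $S$ exactly once (and the remark about shrinking $N_x$ away from $F$) is a welcome elaboration of details the paper leaves implicit, but the argument is the same.
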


\begin{proof}
By Lemma 2.3 \cite{Stebe}, let $D_{\epsilon}=\{x\in \M:dist(x,F)\ge\epsilon\}$, this set is not empty, where $\epsilon\ge 0$ may be chosen so that $D_{\epsilon}\ne\emptyset$. Since $D_{\epsilon}\cap F=\emptyset$, from Lemma \ref{fundlemma1xx}, for each $x\in D_{\epsilon}$, there exists a neighborhood $N_{x}$ of $x$ such that $T^n(N_{x})$ are disjoint. Since $\M$ is a second countable space, $D_{\epsilon}$ has countable basis, i.e. it has countable open cover $N_1,...,N_r,...$. Then each sequence $T^n(x)$ for $x\in \M-F$, $T^n(x)$ only meets $N_i$ once.
\\
Let 
\begin{align}
L_1&=N_1
\\
L_2&=N_2-\cup_{-\infty}^{\infty}T^n(N_1)
\\
L_3&=N_3-\cup_{-\infty}^{\infty}T^n(N_1)-\cup_{-\infty}^{\infty}T^n(N_2)
\\
&\vdots
\\
L_r&=N_r-\cup_{-\infty}^{\infty}T^n(N_1)-...-\cup_{-\infty}^{\infty}T^n(N_{r-1}),
\\
&\vdots
\end{align}
then $S=\cup_{i=1}^{\infty} L_i$ is a fundamental set.
\end{proof}

\begin{lemma}\label{fundlemma4xx}
Let $T$ be the manifold gradient descent for $f:\M\rightarrow\mathbb{R}$, and $F$ be the set of fixed points of $T$, there exist d $T$-invariant functions of $T$ which are continuous and independent on an open dense subset of $\M-F$. 
\end{lemma}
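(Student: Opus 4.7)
The plan is to transport the proof of Lemma \ref{fundlemma4} (and its compact-domain analog in Theorem \ref{IF:compact}) into the manifold setting, using local coordinate charts to extract the $d$ real-valued components of the orbit-representative map $\varphi$. The overall strategy is unchanged: use a fundamental set to define a single map that sends each orbit to a canonical representative, then read off its coordinates as the invariant functions.

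First I would invoke Lemma \ref{fundlemma3xx} to obtain a fundamental set $S \subset \mathcal{M}$ for $T$. Let $\partial S$ be its boundary and set $B = \bigcup_{n \in \mathbb{Z}} T^n(\partial S)$. Because $T$ is a homeomorphism, each $T^n(\partial S)$ is nowhere dense, so $\mathcal{M}' := \mathcal{M} - F - B$ is an open dense subset of $\mathcal{M} - F$. For $x \in \mathcal{M}'$, define $\varphi(x)$ to be the unique element of the orbit $\{T^n(x)\}_{n \in \mathbb{Z}}$ lying in the interior of $S$, so that $\varphi(x) = T^{m(x)}(x)$ for some integer $m(x)$. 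Continuity of $T^{m(x)}$ and openness of the interior of $S$ show that the same integer works in a small neighborhood of $x$, hence $\varphi$ agrees locally with the diffeomorphism $T^{m(x)}$ and is therefore continuous and locally a diffeomorphism on $\mathcal{M}'$.

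Next I would pass to coordinates to obtain the $d$ scalar functions promised by the statement. Given $x \in \mathcal{M}'$, pick a chart $(U, \psi)$ with $\psi : U \to \mathbb{R}^d$ around $\varphi(x)$ and set
\[
(\varphi_1(y), \ldots, \varphi_d(y)) \;=\; \psi\bigl(\varphi(y)\bigr)
\]
on the open neighborhood $\varphi^{-1}(U)$ of $x$. Since $\psi \circ \varphi$ is a composition of local diffeomorphisms, the $\varphi_i$ are continuous and independent in the sense of the remark following Theorem \ref{invariant functions:GD}. Invariance is immediate: $T(y)$ and $y$ lie on the same orbit, so $\varphi(T(y)) = \varphi(y)$, whence $\varphi_i(T(y)) = \varphi_i(y)$ for every $i$. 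Gluing these local charts across $\mathcal{M}'$ gives $d$ locally-defined invariant functions; since independence and continuity are local properties, the statement of the lemma is satisfied on the open dense set $\mathcal{M}'$.

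The main obstacle is justifying that $T$ is actually a global diffeomorphism of $\mathcal{M}$, which was used implicitly above and is needed so that $T^{-1}$ is available when constructing fundamental sets and analyzing orbits in both time directions. Under the Lipschitz-type assumption and the step size $\eta < 1/L$, the descent lemma for retractions gives $f(T(x)) \le f(x)$ and the differential of $T$ at the zero tangent vector is close to the identity, making $T$ a local diffeomorphism. To upgrade this to a global diffeomorphism via Theorem \ref{proper map}, one needs $\mathcal{M}$ to be simply connected and $T$ to be proper; the first is a topological hypothesis on $\mathcal{M}$, while the second should be obtained either from compactness of $\mathcal{M}$ (as in the proof of Theorem \ref{IF:compact}) or from a growth/coercivity condition on $f$ together with properness of the retraction. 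Once this global invertibility is secured, the rest of the argument proceeds as in the Euclidean case and the remainder of the lemmas needed for the representation-by-series statement of Theorem \ref{IF:manifold} can be adapted verbatim.
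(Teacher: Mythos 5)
Your proposal follows essentially the same route as the paper's proof: build a fundamental set via Lemma \ref{fundlemma3xx}, delete the orbit of its boundary to get the open dense set $\M'$, define $\varphi$ as the orbit representative in $S$, show $\varphi=T^m$ locally to get continuity and local diffeomorphism, and read off $d$ component functions. Your extra step of composing with a coordinate chart $\psi$ to produce genuine real-valued components is in fact a more careful treatment of a point the paper glosses over (it simply ``takes components'' of a manifold-valued map), and your closing discussion of global invertibility matches how the paper handles it in the surrounding proof of Theorem \ref{IF:manifold}.
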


\begin{proof}
Let $S\subset\M$ be a fundamental set for $T$. Let $\partial S$ be the boundary of $S$ and let $B=\cup_{-\infty}^{\infty}T^n(\partial S)$. Then $\M-F-B$ is dense in $\M-F$.
Denote $M'=\M-F-B$ and recall that
\[
[x]=\{T^n(x):n\in\mathbb{Z}\}
\]
and 
\[
M'/T:=\{[x]:x\in M'\}.
\]
Let $\varphi(x)$ be the element of $\{T^n(x)\}$ in $S$. We next show that it is continuous. 
\\
If $x\in M'$, $\varphi(x)$ is the unique intersection of $\{T^n(x)\}$ with $S$. Hence there is an integer $m$ such that $T^m(x)\in S$. Since $x\in B$, $T^m(x)$ is an interior point of $S$. Let $U$ be a neighborhood of $T^m(x)$ in $S$. Since $T^m$ is continuous, $V=(T^m)^{-1}(U)=T^{-m}(U)$ is a neighborhood of $x$. If $\vec{y}\in V$, $T^m(y)\in S$ so that $\varphi(y)=T^m(y)$ for all $y\in V$. Hence $\varphi$ is continuous in a neighborhood of $x\in M'$, $M'$ is open and $\varphi=T^m$ for some $m$ in a neighborhood of $x\in \M$.
\\
We set 
\[
\varphi(x)=(\varphi_1(x),...,\varphi_d(x))
\] 
so that the $\varphi_i(x)$ are the components of $\varphi(x)$, it follows that the $\varphi_i(x)$ are continuous and independent on $M'$, since $\varphi(x)$ is a local homeomorphism on $M'$. Since $\varphi(T(x))=\varphi(x)$, $\varphi_i(T(x))=\varphi_i(x)$, meaning that $\varphi_i(x)$ are $T$-invariant, and then the proof is complete.
\end{proof}
We complete the proof of the existence of invariant function by showing that by choosing small enough stepsize $\eta$, the manifold gradient descent is a global diffeomorphism on a simply-connected manifold $\M$. Under a proper choice of local coordinate system, the Jacobian of the differential of gradient descent algorithm on a manifold can be written as $I-\eta\nabla^2f(x)$. Since the determinant of the Jacobian is a continuous function of its coefficients, by taking $\eta$ small enough, the determinant is close to 1 so bounded away from 0, which implies $T$ is a local diffeomorphism. By assumption in the theorem, $\M$ is simply connected, so a proper local diffeomorphism is a global diffeomorphism and then the inverse $T^{-1}$ is well defined. Then the existence of $d$ $T$-invariant functions follows from Lemma \ref{fundlemma4xx}.
\end{proof}

\begin{proof}
\textbf{Representation of invariant functions}
\begin{lemma}
The set of cluster points of $\{T^n(x)\}_{n\in\mathbb{Z}}$ is the union of $L_{x}$ and $l_{x}$. The value of $f$ is constant on each of $L_{x}$ and $l_{x}$. If $f(L_{x})$ denotes the value of $f$ on $l_{x}$ and $f(L_{x})$ denotes the value of $f$ on $L_{x}$ we have $f(L_{x})>f(l_{x})$ whenever $x$ is not a fixed point of $T$ in $M$.
\end{lemma}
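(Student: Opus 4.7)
The plan is to mirror the Euclidean argument (Lemma IF:sum1), substituting the Riemannian descent lemma recalled in the existence part of the current proof for the Euclidean Taylor-based descent. The key input is that along any orbit of a non-fixed point, $f\circ T^n$ is strictly monotone: for Riemannian gradient descent with step $\eta<1/L$, the descent lemma (\cite{CB}, Lemma C.2) gives $f(T(x))<f(x)$ whenever $\grad f(x)\ne 0$, i.e.\ whenever $x\notin F$. Since $T$ is a homeomorphism with $F$ also the fixed-point set of $T^{-1}$, no iterate of a non-fixed $x$ is fixed, so strict monotonicity propagates along the whole bi-infinite orbit.

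First I would record the decomposition of cluster points: any cluster point of $\{T^n(x)\}_{n\in\mathbb Z}$ is the limit of a subsequence $T^{n_k}(x)$, and by passing to a sub-subsequence we may assume $n_k\to+\infty$ or $n_k\to-\infty$. Thus the set of cluster points equals $l_x\cup L_x$, where $l_x$ and $L_x$ are the forward and backward limit sets. Constancy of $f$ on $l_x$ then follows from the fact that $n\mapsto f(T^n(x))$ is strictly decreasing in $n\ge 0$, hence converges to some $m_x\in\mathbb R\cup\{-\infty\}$; for any $a\in l_x$ with $T^{n_k}(x)\to a$, continuity of $f$ forces
\begin{equation*}
f(a)=\lim_{k\to\infty} f(T^{n_k}(x))=m_x,
\end{equation*}
so $f\equiv m_x$ on $l_x$. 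The same argument applied to $T^{-1}$ (which satisfies $f(T^{-1}(y))>f(y)$ on $M-F$ because $y=T(T^{-1}(y))$ and $T^{-1}(y)\notin F$) shows that $n\mapsto f(T^{-n}(x))$ is strictly increasing, converges to some $M_x$, and $f\equiv M_x$ on $L_x$.

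Finally, for the strict inequality when $x\notin F$, I would chain the strict descent at a single step: since $x\notin F$, the descent lemma gives $f(T^{-1}(x))>f(x)>f(T(x))$. Combined with monotonicity, this yields
\begin{equation*}
f(L_x)=\lim_{n\to\infty} f(T^{-n}(x))\ge f(T^{-1}(x))>f(T(x))\ge \lim_{n\to\infty} f(T^n(x))=f(l_x),
\end{equation*}
which is the required strict separation.

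The main delicate point, and essentially the only one, is making sure the strict descent of the Riemannian scheme survives iteration in both directions; this is where the earlier observation that $F$ is simultaneously the fixed set of $T$ and of $T^{-1}$ is used. Everything else is soft and relies only on continuity of $f$ and the homeomorphism property of $T$, both already established in the existence half of the theorem. I do not anticipate that finiteness of $m_x,M_x$ needs separate argument: if $l_x=\emptyset$ or $L_x=\emptyset$ the corresponding constancy claim is vacuous, while if they are nonempty, continuity of $f$ forces $m_x,M_x\in\mathbb R$.
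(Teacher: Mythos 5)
Your proof is correct and follows essentially the same route as the paper's: strict monotonicity of $n\mapsto f(T^n(x))$ along the orbit of a non-fixed point (from the Riemannian descent lemma and the fact that $F$ is the common fixed-point set of $T$ and $T^{-1}$), combined with continuity of $f$, forces $f$ to be constant on each of $l_x$ and $L_x$. Your write-up is in fact slightly more complete than the paper's, which only argues constancy on the two limit sets and leaves both the decomposition of the cluster set and the strict inequality $f(L_x)>f(l_x)$ implicit, whereas you supply the chain $f(L_x)\ge f(T^{-1}(x))>f(x)>f(T(x))\ge f(l_x)$ explicitly.
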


\begin{proof}
Denote $d(\cdot,\cdot)$ the geodesic distance on $\M$. Let $a,b\in l_{x}$, then by definition of cluster point, we have two subsequences $\{n_i\}_{\mathbb{Z}_+}$ and $\{n_j\}_{\mathbb{Z}_+}$ of $n\ge 0$, such that
\[
\lim_{i\rightarrow\infty}d(T^{n_i}(x)-a)=0
\]
and
\[
\lim_{j\rightarrow\infty}d(T^{n_j}(x)-b)=0.
\]
Since $f$ is continuous, we have that 
\[
\lim_{i\rightarrow\infty}f(T^{n_i}(x))=f(a)
\]
and 
\[
\lim_{j\rightarrow\infty}f(T^{n_j}(x))=f(b).
\]
By the fact that $\lim_{i\rightarrow\infty}f(T^{n_i}(x))=\lim_{i\rightarrow\infty}f(T^{n_i}(x))=\text{local minimum with initial condition}\ \ x$, we conclude $f(a)=f(b)$.
\\
The other case, let $\{n\}=\{n=-1,-2...\}$ be the sequence of negative integers and $c,d\in L_{x}$, there exist two subsequences negative integers $\{n_i\}_{i\in\mathbb{Z}_+}$ and $\{n_j\}_{j\in\mathbb{Z}_+}$ of $\{n\}$, such that
\[
\lim_{i\rightarrow\infty}d(T^{n_i}(x)-c)=0
\]
and
\[
\lim_{j\rightarrow\infty}d(T^{n_j}(x)-d)=0,
\]
and by the continuity of $f$, we have
\[
\lim_{i\rightarrow\infty}f(T^{n_i}(x))=f(c)
\]
and 
\[
\lim_{j\rightarrow\infty}f(T^{n_j}(x))=f(d).
\]
Since $\lim_{i\rightarrow\infty}f(T^{n_i}(x))=\lim_{j\rightarrow\infty}f(T^{n_j}(x))=\text{local maximum with initial condition}\ \ x$, $f(c)=f(d)$.
\end{proof}

\begin{lemma}\label{B2xx}
Let $x_0$ be an element of $M$. Either there is a neighborhood $N$ of $x_0$ such that $f(L_{x})=f(L_{x_0})$ for all $x\in N$ or in every neighborhood of $x_0$ there is an $\vec{x}$ such that $f(L_{x})>f(L_{x_0})$. 
\end{lemma}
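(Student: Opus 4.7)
The plan is to adapt the Euclidean argument of Lemma~\ref{B2x} (equivalently Lemma~\ref{B2}) verbatim to the manifold setting, with only two cosmetic changes: the distance is taken to be the Riemannian distance $d(\cdot,\cdot)$ on $\M$, and the continuity of iterates of $T$ is invoked in its manifold form. The existence-of-invariant-functions part of the proof of Theorem~\ref{IF:manifold} has already established that $T$ is a global diffeomorphism of $\M$, so every $T^m$ with $m\in\BZ$ is a homeomorphism of $\M$, which is what the topological argument needs.

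First I would reformulate the dichotomy as a contrapositive. Assume the second alternative fails, i.e.\ there exists a neighborhood $N_1$ of $x_0$ such that $f(L_x)\le f(L_{x_0})$ for every $x\in N_1$. The goal then reduces to producing a (possibly smaller) neighborhood $N\subset N_1$ of $x_0$ on which $f(L_y)=f(L_{x_0})$ identically.

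Next, for each $\xi>0$ I would introduce the sublevel-type set
\[
S_\xi=\{x\in\M : f(L_x)\ge f(L_{x_0})-\xi\}
\]
and check that $S_\xi$ is open. If $x\in S_\xi$, then since $L_x$ is the cluster set of the backward iterates and $f$ is monotone along orbits (applied to $T^{-1}$, using the descent lemma in the manifold setting), there exists $m\in\BZ$ with $f(T^m(x))>f(L_{x_0})-\xi$. Continuity of $T^m$ and of $f$ on $\M$ then provides a neighborhood $N_x$ of $x$ with $f(T^m(y))>f(L_{x_0})-\xi$ for every $y\in N_x$; combining with $f(L_y)\ge f(T^m(y))$ yields $N_x\subset S_\xi$. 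Setting $N(\xi)=S_\xi\cap N_1$, every $y\in N(\xi)$ then satisfies $f(L_{x_0})-\xi\le f(L_y)\le f(L_{x_0})$.

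The final step is to invoke the finiteness of the value set $\{f(L_y):y\in\M\}$: since $L_y\subset F$, the cluster values $f(L_y)$ lie in $f(F)$, and in the setting under consideration (compactness of $\M$ or the strict-saddle/countable-critical-point regularity used in the analogous Euclidean lemma) this image is at most countable with no accumulation, so on the bounded set $N_1$ only finitely many values occur. Choosing $\xi>0$ strictly smaller than the minimum positive gap between $f(L_{x_0})$ and any other value taken, we force $f(L_y)=f(L_{x_0})$ on $N(\xi)$, which is the desired $N$. The main obstacle I anticipate is precisely this finiteness statement: the Euclidean predecessors invoke it tacitly, and in the manifold case it relies on additional hypotheses on $F$ (discreteness or a Morse-type nondegeneracy) rather than on the purely topological core of the argument; once it is secured, the rest is a transcription of Lemma~\ref{B2x}.
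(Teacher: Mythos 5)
Your proof is correct and takes essentially the same route as the paper's: the same set $S_\xi$, the same openness argument via continuity of $T^m$ and the inequality $f(L_y)\ge f(T^m(y))$, and the same conclusion via finiteness of the values $f(L_x)$. Your observation that the finiteness of the value set is the tacit step is accurate — the paper likewise asserts it without further justification, relying on $L_x\subset F$.
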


\begin{proof}
Suppose there is a neighborhood $N_1$ of $x_0$ in $M$ such that $f(L_{x_0})\ge f(L_{x})$ for all $x\in N_1$. Let $\xi$ be a positive number. Let $S_{\xi}=\{x:f(L_{x})\ge f(L_{x_0})-\xi\}$. We show that $S_{\xi}$ is open. If $x$ is an element of $S_{\xi}$, there is an $m$ such that $f(T^m(x))>f(L_{x_0})-\xi$. Since $T^m$ is continuous, there is a neighborhood $N_{x}$ of $x$ such that $f(T^m(y))>f(L_{x_0})-\xi$ for all $y$ in $N_{x}$. But $f(L_{y})\ge f(T^m(y))$ for all $y\in M$ so that $f(L_{y})\in S_{\xi}$ for all $y\in N_{x}$. Hence $S_{\xi}$ is open. Let $N(\xi)=S_{\xi}\cap N_{x_0}$. Since $x_0$ is an element of $S_{\xi}$ for all positive $\xi$, $N(\xi)$ is not empty for $\xi>0$. Since $N(\xi)$ is contained in $N_{x_0}$ and $S_{\xi}$, $f(L_{x_0})\ge f(L_{x})\ge f(L_{x_0})-\xi$ for all $x$ in $N(\xi)$. Since the points of $L_{x}$ are in $F$, the set of fixed points of $T$, $f(L_{x})$ can assume only finitely many values. Hence for $\xi$ sufficiently small
\[
f(L_{x_0})\ge f(L_{x})\ge f(L_{x_0})-\xi
\]
implies that $f(L_{x})=f(L_{x_0})$, and so for some $\xi$, $x\in N(\xi)$ implies that $f(L_{x})=f(L_{x_0})$.
\end{proof}

\begin{lemma}
Let $x_0$ be an element of $M$. Either there is a neighborhood $N_{x_0}$ of $x_0$ in $M$ such that $f(L_{x_0})=f(L_{x})$ for all $x$ in $N_{x_0}$ or every neighborhood $N$ of $x_0$ contains an open subset $V_{N}$ such that $f(L_{y})=f(L_{z})$ for all $y$ and $z$ in $V_{N}$.
\end{lemma}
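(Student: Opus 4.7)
The plan is to port the argument already carried out for the Euclidean/compact version (the lemma proved just after Lemma~\ref{B2}) verbatim to the manifold setting, exploiting the dichotomy already established in Lemma~\ref{B2xx}. The only new ingredient is that the underlying space is the Riemannian manifold $\M$ rather than $\mathbb{R}^d$ or $M$, but everything used in the earlier argument, namely continuity of $T^m$, openness of preimages, and the finiteness of the range of $x \mapsto f(L_x)$, is formulated in purely topological terms and therefore transfers without modification.

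The proof splits along the stated dichotomy, and I would handle only the nontrivial branch. Assume that there is no neighborhood of $x_0$ on which $f(L_\cdot) \equiv f(L_{x_0})$; I must produce, in each neighborhood $N$ of $x_0$, an open subset $V_N$ on which $f(L_\cdot)$ is constant. Applying Lemma~\ref{B2xx} at $x_0$ rules out the first horn, so the second horn yields some $x \in N$ with $f(L_x) > f(L_{x_0})$. Set $K = \sup_{x \in N} f(L_x)$. Because $L_x \subset F$ for every $x$ and the values of $f$ on the fixed point set constitute a finite subset of $\mathbb{R}$ (the same finiteness hypothesis invoked at the end of the proof of Lemma~\ref{B2xx}), the supremum is attained: pick $y \in N$ with $f(L_y) = K$.

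Then $N$ is a neighborhood of $y$ and $f(L_y) \geq f(L_x)$ for all $x \in N$, which forbids the second horn of Lemma~\ref{B2xx} at $y$ (it would require points in $N$ with strictly larger value of $f(L_\cdot)$). Hence the first horn applies at $y$: there is a neighborhood $U$ of $y$ with $f(L_x) = f(L_y)$ for all $x \in U$. Defining $V_N := N \cap U$ gives a nonempty open subset of $N$ on which $f(L_y) = f(L_z)$ for all $y,z \in V_N$, which is the required conclusion.

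The only step that needs care, and which is the same obstacle as in the Euclidean proof, is the claim that $f(L_\cdot)$ takes only finitely many values so that the supremum $K$ is attained. On a general manifold this is not automatic; one leverages that each $L_x$ sits in the fixed point set $F$ and that the orbits $\{T^{-n}(x)\}$ cluster at the local maxima of $f$ reached by the reversed gradient flow, whose $f$-values form a finite set under the standing assumptions (compactness of $\M$ in the case of Theorem~\ref{IF:compact}, or the discreteness/strict saddle condition used in Corollary~\ref{IF:construction discrete crit}). Everything else is a mechanical adaptation of the Euclidean lemma, so once this finiteness is in hand the proof closes immediately.
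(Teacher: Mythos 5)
Your proposal is correct and follows essentially the same route as the paper's proof: rule out the first horn, invoke the preceding dichotomy lemma to find a point of strictly larger $f(L_\cdot)$-value in $N$, use finiteness of the range of $f(L_\cdot)$ to attain the supremum at some $y\in N$, and then apply the dichotomy again at $y$ (where the second horn is impossible) to obtain the neighborhood $U$ and set $V_N=N\cap U$. Your added caveat about justifying the finiteness of the range on a general manifold is a fair observation, but the paper relies on the same assertion without further elaboration.
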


\begin{proof}
Suppose $x_0$ is an element of $M$ and there is no neighborhood $U$ of $x_0$ in $M$ such that $f(L_{x})=f(L_{x_0})$ for all $x$ in $U$. Let $N$ be a neighborhood of $x_0$. According to the lemma \ref{B2xx}, there is an element $x$ of $N$ such that $f(L_{x})>f(L_{x_0})$. Let $K$ be the least upper bound of $f(L_{x})$ for $x$ in $N$. Since the range of $f(L_{x})$ is finite, there is a point $y$ of $N$ such that $f(L_{y})=K$. Thus $f(L_{y})\ge f(L_{x})$ for all $x$ in $N$, and $N$ is a neighborhood of $y$. By lemma \ref{B2}, there is a neighborhood $U$ of $y$ such that $f(L_{y})=f(L_{x})$ for all $x\in U$. Let $V_{N}=N\cap U$.
\end{proof}

\begin{lemma}
Let $x_0$ be an element of $M$. Either there is a neighborhood $N_{x_0}$ of $x_0$ in $M$ such that $f(l_{x_0})=f(l_{x})$ for all $x$ in $N_{x_0}$, or every neighborhood $N$ of $x_0$ contains an open subset $U_{N}$ such that $f(l_{y})=f(l_{z})$ for all $y$ and $z$ in $U_N$.
\end{lemma}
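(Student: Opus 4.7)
The plan is to mimic the proof of Lemma \ref{IF:sum4} (the Euclidean statement for $l_{\vec{x}}$), replacing the Euclidean setting with the manifold $\M$ and using only those structural facts that have already been established for the manifold gradient descent $T$: namely, $T$ is a homeomorphism, $f(T(x))<f(x)$ off the fixed-point set $F$, and $f$ is constant on each cluster set $l_x$. As in the $L$-case just treated, the whole argument runs by applying the previous reasoning to $T^{-1}$, which increases $f$ strictly off $F$ and turns $l_x$ into the object that plays the role $L_x$ played in Lemma \ref{B2xx}.

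First I would establish the manifold analog of the auxiliary Lemma \ref{B2x}/\ref{B2xx} for $l$: either $f(l_x)=f(l_{x_0})$ in a neighborhood of $x_0$, or every neighborhood of $x_0$ contains a point $x$ with $f(l_x)<f(l_{x_0})$. To do so, suppose there is a neighborhood $N_1$ of $x_0$ on which $f(l_{x_0})\le f(l_x)$ for all $x\in N_1$, and for $\xi>0$ set $S_\xi=\{x\in \M : f(l_x)\le f(l_{x_0})+\xi\}$. The set $S_\xi$ is open because if $x\in S_\xi$ one can pick $m$ with $f(T^m(x))<f(l_{x_0})+\xi$ (using that $f(T^n(x))\downarrow f(l_x)$); continuity of $T^m$ then gives a neighborhood $N_x$ on which $f(T^m(y))<f(l_{x_0})+\xi$, and since $f(l_y)\le f(T^m(y))$ one gets $N_x\subset S_\xi$. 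Intersecting with $N_1$ gives a neighborhood $N(\xi)$ of $x_0$ on which $f(l_{x_0})\le f(l_x)\le f(l_{x_0})+\xi$. Because $l_x\subset F$ and, by the lemma identifying $f(l_x)$ with an $f$-value on the compact fixed-point set, $f(l_x)$ takes only finitely many values, the squeeze forces $f(l_x)=f(l_{x_0})$ for all sufficiently small $\xi$, giving the desired neighborhood.

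Granted this dichotomy, I finish the main lemma exactly as in the Euclidean case. If there is no neighborhood of $x_0$ on which $f(l_\cdot)\equiv f(l_{x_0})$, then in any given neighborhood $N$ of $x_0$ the preceding dichotomy furnishes some $x\in N$ with $f(l_x)<f(l_{x_0})$. Let $K=\inf_{x\in N}f(l_x)$; since the range of $f(l_\cdot)$ is finite, the infimum is attained at some $y\in N$, and $N$ is then a neighborhood of $y$ on which $f(l_y)\le f(l_x)$ for all $x\in N$. Applying the auxiliary dichotomy at $y$ produces a neighborhood $U$ of $y$ with $f(l_y)=f(l_x)$ for all $x\in U$. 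Setting $U_N=N\cap U$ yields the open subset promised by the lemma.

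The main obstacle is the finiteness of the range of $f(l_\cdot)$, which is what drives the squeeze argument. On the manifold this requires that the fixed-point set $F$ is compact and that on each of its connected components the potential $f$ is constant on $l_x$; both of these are inherited from the standing compactness/continuity hypotheses on $\M$ and from the preceding cluster-set lemma, so the translation from $\BR^d$ to $\M$ is routine once $T^{-1}$ is used to swap the roles of $L_x$ and $l_x$.
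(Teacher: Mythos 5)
Your proposal follows essentially the same route as the paper's proof: establish the $l$-analog of the auxiliary dichotomy by showing $S_{\xi}=\{x: f(l_x)\le f(l_{x_0})+\xi\}$ is open and squeezing via the finiteness of the range of $f(l_\cdot)$, then in the second alternative pick the point of $N$ where $f(l_\cdot)$ attains its extremum and intersect the resulting neighborhood with $N$. You even correctly replace the paper's (mistyped) ``least upper bound of $f(L_x)$'' with the infimum of $f(l_x)$, which is what the argument actually needs.
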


\begin{proof}
Using the fact that if $T$ is a homeomorphism of $M$ onto itself, $T^{-1}$ is defined and either $x=T^{-1}(x)$ or $f(T^{-1}(x))<f(x)$, we can modify the above arguments by replacing $T$ with $T^{-1}$ and reversing the inequalities to have the results about the function $f(l_{x})$. Suppose there is a neighborhood $N_1$ of $x_0$ in $M$ such that $f(l_{x_0})\le f(l_{x})$ for all $x\in N_1$. Let $\xi$ be a positive number. Let $S_{\xi}=\{x:f(l_{x})\le f(l_{x_0})+\xi\}$. We show that $S_{\xi}$ is open. If $x$ is an element of $S_{\xi}$, there is an $m$ such that $f(T^m(x))<f(l_{x_0})+\xi$. Since $T^m$ is continuous, there is a neighborhood $N_{x}$ of $x$ such that $f(T^m(y))<f(l_{x_0})+\xi$ for all $y$ in $N_{x}$. But $f(l_{y})\le f(T^m(y))$ for all $y\in M$ so that $f(l_{y})\in S_{\xi}$ for all $y\in N_{x}$. Hence $S_{\xi}$ is open. Let $N(\xi)=S_{\xi}\cap N_{x_0}$. Since $x_0$ is an element of $S_{\xi}$ for all positive $\xi$, $N(\xi)$ is not empty for $\xi>0$. Since $N(\xi)$ is contained in $N_{x_0}$ and $S_{\xi}$, $f(l_{x_0})\le f(l_{x})\le f(l_{x_0})+\xi$ for all $x$ in $N(\xi)$. Since the points of $l_{x}$ are in the fixed point set $F$, $f(l_{x})$ can assume only finitely many values. Hence for $\xi$ sufficiently small 
\[
f(l_{x_0})\le f(l_{x})\le f(l_{x_0})+\xi
\]
implies that $f(l_{x})=f(l_{x_0})$, and so for some $\xi$, $x\in N(\xi)$ implies that $f(l_{x})=f(l_{x_0})$. Next, suppose $x_0$ is an element of $M$ and there is no neighborhood $U$ of $x_0$ in $M$ such that $f(l_{x})=f(l_{x_0})$ for all $x$ in $U$. Let $N$ be a neighborhood of $x_0$. According to the above arguments, there is an element $x$ of $N$ such that $f(l_{x})<f(l_{x_0})$. Let $K$ be the least upper bound of $f(L_{x})$ for $x$ in $N$. Since the range of $f(l_{x})$ is finite, there is a point $y$ of $N$ such that $f(l_{y})=K$. Thus $f(l_{y})\le f(l_{x})$ for all $x$ in $N$, and $N$ is a neighborhood of $y$. Thus there is a neighborhood $U$ of $y$ such that $f(l_{y})=f(l_{x})$ for all $x\in U$. Let $U_N=N\cap U$, the proof completes.
\end{proof}

Next we complete the proof of theorem. For each $x\in M-F$, $\Phi(x)$ is convergent. Let $G_1$ be the set of all elements $x$ of $M$ such that $f(L_{x})$ is constant in a neighborhood of $x$. Let $G_2$ be the set of all elements $x$ of $M$ such that $f(l_{x})$ is a constant in a neighborhood of $x$. Notice that $G=(M-F)\cap G_1\cap G_2$ is an open dense subset of $M-F$. For each $x\in M$, let 
\[
S(x)=\sum_{n=-\infty}^{\infty}f(T^{n-1}(x))-f(T^n(x)).
\]
Clearly $S(x)$ converges at each $x$ to $f(L_{x})-f(l_{x})$. Let $y$ be an element of $G$. There is a neighborhood $U$ of $y$ such that $S(x)$ represents the constant function in $U$. Since $y\notin F$ and $F$ is compact, there is a neighborhood $W$ of $y$ such that $\bar{W}\subset U\cap V$. Then $S(x)$ is a series of positive terms converging to a continuous function on $\bar{W}$ and $S(x)$ converges uniformly on $\bar{W}$. Let $p(x)$ be any bounded function continuous on $M$. The series
\[
\Phi(x)=\sum_{n=-\infty}^{\infty}p(T^n(x))\left(f(T^{n-1}(x))-f(T^n(x))\right)
\]
converges uniformly on $\bar{W}$ since $f$ is taken to be bounded on $M$. Since $p$, $f$ and $T$ are continuous , $\Phi(x)$ is continuous on $\bar{W}$ and hence at $y$. The invariance is obvious, so the proof completes.
\end{proof}

\end{document}